\newtheorem{theorem}{Theorem}[section]
\newtheorem{lemma}[theorem]{Lemma}
\newtheorem{proposition}[theorem]{Proposition}
\newtheorem{cor}[theorem]{Corollary}
\newtheorem{definition}[theorem]{Definition}
\newtheorem{remark}[theorem]{Remark}
\newenvironment{customthm}[1]
{\innercustomthm}
{\endinnercustomthm}
\def\v@rt#1#2{\m@th\ooalign{$\hfil#1|\hfil$\crcr$#1#2$}}
\def\captr{\mathrel{\mathpalette\v@rt\cap}}
\newcommand{\Rmnum}[1]{\expandafter\@slowromancap\romannumeral #1@}
\newcommand\blfootnote[1]{
	\begingroup
	\renewcommand\thefootnote{}\footnote{#1}
	\addtocounter{footnote}{-1}
	\endgroup
}
\newcommand{\Z}{\mathbb{Z}}
\def \Refl {T}
\def \RRefl {\boldsymbol{T}}
\def \refl {t}
\def \rrefl {\boldsymbol{t}}
\def \SS {{\boldsymbol{\mathcal{S}}}}
\def \ss {{\boldsymbol s}}
\def \tt {{\boldsymbol t}}
\def\bs#1{\boldsymbol{#1}}
\def \cc#1#2#3#4{{\hbox{\sc cc}}(#1,#2,#3,#4)}
\def \tc#1#2#3#4{{\hbox{\sc tc}}(#1,#2,#3,#4)}
\def \PA {\mathrm{PA}}
\def \Sym {\mathrm  {Sym}}
\def \Fix {\mathrm  {Fix}}
\def \fix {\mathrm  {Fix}}
\def \Mov {\mathrm  {Mov}}
\def \im {\mathrm  {im}}
\def \Sym {\mathrm  {Sym}}
\begin{document}

\title{Interval groups related to finite Coxeter groups\\
	Part II}
\author{Barbara Baumeister, Derek F. Holt,  Georges Neaime, and Sarah Rees}
\date{November 24, 2022}
	
\maketitle

\begin{abstract}
We provide a complete description of the presentations of the interval groups related to quasi-Coxeter elements in finite Coxeter groups. In the simply laced cases, we show that each interval group 
	is the quotient of the Artin group associated with the corresponding Carter
diagram by the normal closure of a set of twisted cycle commutators,
one for each 4-cycle of the diagram.
	Our techniques also reprove an analogous result for the Artin
	groups of finite Coxeter groups, which are interval groups corresponding to Coxeter
	elements. We also analyse the situation in the non-simply laced cases, where a new Garside structure is discovered.

Furthermore, we obtain a complete classification of whether the interval group we consider is isomorphic or not to the related Artin group. Indeed, using methods of Tits, we prove that the interval groups of proper quasi-Coxeter elements are not isomorphic to the Artin groups of the same type, in the case of $D_n$ when $n$ is 
	even or in any of the exceptional cases. In \cite{BHNR_Part3}, we show  using different methods that this result holds for type $D_n$ for all $n \geq 4$.
\end{abstract}

\blfootnote{2010 MSC:  20F55, 20F36, 05E15.}
\blfootnote{Keywords and phrases: Coxeter groups, quasi-Coxeter elements, Artin groups, dual approach to Coxeter and Artin groups, generalised non-crossing partitions, Garside groups, Interval groups.}

\begin{small}
	\tableofcontents 
\end{small}

\section{Introduction}\label{SectionIntro}

This article is a follow-up to the paper \cite{BaumNeaRees} %entitled ``Interval groups related to finite Coxeter groups I'' 
by the first, third and fourth authors.
We considered in Part I \cite{BaumNeaRees} the unique infinite family of finite Coxeter groups where proper quasi-Coxeter elements exist (it is the family of type $D_n$). In this paper, we provide a complete description of the interval groups for all the quasi-Coxeter elements for all the types of finite Coxeter groups.

Let $(W,R)$ be a finite Coxeter system, and $w$ a quasi-Coxeter element
(defined in Definition~\ref{DefQuasiCoxInGen}).  Every Coxeter element is
a quasi-Coxeter element. Those elements that are not Coxeter elements are called proper quasi-Coxeter elements.
A proper quasi-Coxeter element exists precisely in types $D_n$ for  $n \geq 4$, $E_6$, $E_7$, $E_8$, $F_4$, $H_3$, and $H_4$ (see \cite{BaumGobet}).

In \cite{Carter}, Carter defines a diagram $\Delta$ associated with each conjugacy class of elements of the simply laced Coxeter groups. For 
quasi-Coxeter elements, these diagrams are the Coxeter diagrams and the diagrams shown in Figures~\ref{FigureCarterDiagramDn} to \ref{FigureCarterDiagramF4a1}. We determine the quasi-Coxeter elements for the non-simply laced types $H_3$ and $H_4$ using the computer algebra system 
GAP \cite{GAP4}.

We denote by $G([1,w])$ the interval group related to $w$ (defined in
Definition~\ref{DefIntervalGrpsCoxeter}). 
We describe presentations of the interval groups for the simply laced types by generators and relations all in accordance with the corresponding Carter diagram along with what we call twisted cycle commutator relators, which are defined again from the Carter diagram. We formulate these presentations in the next theorem.

\begin{customthm}{A}\label{ThmPres}
Let $W$ be a simply laced Coxeter group, 
$w$ a proper quasi-Coxeter element of $W$, and $\Delta$ the Carter diagram associated with $w$.
Then the interval group $G([1,w])$ 
	is the quotient of the Artin group $A(\Delta)$
of the Carter diagram $\Delta$ associated with $w$ by the normal 
closure of a set of twisted cycle commutators
		$\tc{\ss_1}{\ss_2}{\ss_3}{\ss_4}$,
one for each 4-cycle $(s_1,s_2,s_3,s_4)$ within $\Delta$, where
		$\tc{\ss_1}{\ss_2}{\ss_3}{\ss_4}$ is defined to be $[\ss_1,\ss_2^{-1}\ss_3\ss_4\ss_3^{-1}\ss_2]$.
\end{customthm}

For type $D_n$, Theorem~\ref{ThmPres} is proven in \cite{BaumNeaRees}. We restate that result in this paper as Theorem~\ref{ThmPresDn}. For the exceptional cases $E_6$, $E_7$, and $E_8$, we explain in Section~\ref{SecPres} how we prove this result computationally; that result is stated as Theorem~\ref{ThmEnIntervalProper}.

Theorem~\ref{ThmPres} evokes a similar result for Artin groups of the same types, 
proved in \cite{GrantMarsh,Haley}, 
relating to most (but not all) of the Carter diagrams referred to in Theorem~\ref{ThmPres}. It is proven in this article as 
Theorem~\ref{ThmEnArtin}.

Nice presentations for the  non-simply laced finite Coxeter groups of types $F_4$ and $H_3$, are described in Theorems~\ref{ThmH3_a1}~to~\ref{ThmF4IntervalProper}.

Other important results in Section~\ref{SecPres} concern the poset related to the interval $[1,w]$.  We embed this poset into the poset of subspaces of the ambient 
vector space of the Tits' representation of $(W,R)$ as well as in the poset of 
parabolic subgroups of $W$. Further, we classify in Theorem~\ref{ThmLattice} the cases where $[1,w]$ is a lattice. In particular, the interval group related to the quasi-Coxeter element $H_3(a_2)$ considered in Theorem~\ref{ThmH3_a2}  is a Garside group, and by Theorem~\ref{ThmNonIsom} below, that Garside group is not isomorphic to the Artin group of type $H_3$.

The main result of Section~\ref{SecNonIsom} is the following. 

\begin{customthm}{B}\label{ThmNonIsom}
	For $W$ of type $D_n$ with $n$ even and for all the exceptional types, the interval group $G([1,w])$ of a proper quasi-Coxeter element $w$ is not isomorphic to the Artin group of the same type as $W$.
\end{customthm}

Its proof employs an  adaptation of Tits' methods, which   were originally introduced by Tits in \cite{Tits}, where the study of Artin groups of finite Coxeter groups was initiated. Theorem~\ref{ThmNonIsom} will be a consequence of  Theorems~\ref{NonIsoDn}~and~\ref{NonIsoEn}. A complete proof for type $D_n$ for any $n$ based on different methods is provided by the authors in~\cite{BHNR_Part3}.\\

\noindent
\textbf{Acknowledgements}. The third author would like to thank the DFG as he is funded through the DFG grants BA2200/5-1 and RO1072/19-1. The first and third authors would like to thank Theo Douvropoulos and Thomas Gobet for the fruitful discussions during their visits to Bielefeld University.

\section{Presentations of interval groups for quasi-Coxeter elements}\label{SecPres}

\subsection{Dual approach to Coxeter groups}\label{SubDualApproach}

Let $(W,R)$ be a Coxeter system, and let $T = \bigcup\limits_{w \in W}^{} w^{-1}Rw$ be the set of reflections of $W$. The dual approach to the Coxeter group $W$ is the study of $W$ as a group generated by $T$. Note that the classical approach uses the Coxeter system $(W,R)$ with generating set $R$. 

Each $w \in W$ is a product of reflections in $\Refl$. We define 
$$
\ell_\Refl(w):= \min \{ k \in \mathbb{Z}_{\geq 0} \mid w=\refl_{1}\refl_2 \cdots \refl_{k};\ \refl_i \in \Refl \}
$$
called the reflection length of $w$. Let $w = \refl_1\refl_2 \cdots \refl_k$ with $\refl_i \in \Refl$ and $k=\ell_\Refl (w)$. We call $(\refl_1,\refl_2, \dotsc, \refl_k)$ (or $\refl_1\refl_2 \cdots \refl_k$ by abuse of notation) a reduced decomposition of $w$.

Since the set $\Refl$ of reflections is closed under conjugation, there is a natural way to obtain new reflection decompositions from a given one. The braid group $\mathcal{B}_n$ acts on the set $\Refl^n$ of $n$-tuples of reflections via
\begin{align*}
	\boldsymbol{r}_i (\refl_1 ,\dotsc , \refl_n ) &:= (\refl_1 ,\dotsc , \refl_{i-1} , \hspace*{5pt} \refl_i \refl_{i+1} \refl_i,
	\hspace*{5pt} \phantom{\refl_{i+1}}\refl_i\phantom{\refl_{i+1}}, \hspace*{5pt} \refl_{i+2} ,
	\dotsc , \refl_n), \\
	\boldsymbol{r}_i^{-1} (\refl_1 ,\dotsc , \refl_n ) &:= (\refl_1 ,\dotsc , \refl_{i-1} , \hspace*{5pt} \phantom
	{\refl_i}\refl_{i+1}\phantom{\refl_i}, \hspace*{5pt} \refl_{i+1}\refl_i\refl_{i+1}, \hspace*{5pt} \refl_{i+2} ,
	\dotsc , \refl_n).
\end{align*}
\noindent We call this action of $\mathcal{B}_n$ on $\Refl^n$ the Hurwitz action. It is readily observed that this action restricts to the set of all reduced reflection decompositions of a given element $w \in W$. If the latter action is transitive, then we say that the dual Matsumoto property holds for $w$.\\

\subsection{Quasi-Coxeter elements}

Recall that a Coxeter element $c \in W$ is defined to be any conjugate of the product of all elements of $R$ in some order. A more general notion of (parabolic) quasi-Coxeter elements is described in the next definition. It is borrowed from \cite{BaumGobet}.

\begin{definition}\label{DefQuasiCoxInGen}
	\begin{itemize}
	\item[(a)] A subgroup $P$ of $W$ is called a parabolic subgroup if 
	 there is a simple system $R'= \{ r_1' , \ldots , r_n'\}$ of $W$  such that $P = \langle r_1' , \ldots , r_m' \rangle$ for some $m \leq n$. 
	 \item[(b)] An element $w \in W$ is called   a parabolic quasi-Coxeter element for $(W,T)$ 
if there is  a reduced decomposition $w = t_1  \cdots  t_m$ such that $\langle t_1 , \ldots , t_m \rangle$ is a parabolic subgroup of $W$.
If  this parabolic subgroup is $W$, then we simply    call $w$  quasi-Coxeter element.
\end{itemize}
\end{definition}

\begin{remark}
Usually, a  parabolic subgroup of $W$ is defined to be the conjugate of a subgroup generated 
by a subset of $R$. 
If $W$ is finite, then Definition~\ref{DefQuasiCoxInGen}(a) coincides with the usual definition of a parabolic subgroup agrees  (see \cite[Section 4]{BaumGobet}).
\end{remark}

Every Coxeter element is a quasi-Coxeter element, and a quasi-Coxeter element is called proper if it is not a Coxeter element.

The dual Matsumoto property characterises the parabolic quasi-Coxeter elements (see Theorem~1.1 in \cite{BaumGobet}).

\begin{theorem}\label{LemmaTransitivityQCox}
	An element  $ w \in W$ is a parabolic quasi-Coxeter element if and only if the dual Matsumoto property holds for $w$.
\end{theorem}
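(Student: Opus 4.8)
The plan is to treat the two implications separately, after isolating the one elementary observation that makes the statement coherent: each Hurwitz move $\boldsymbol{r}_i^{\pm 1}$ only conjugates one letter by an adjacent one, so it preserves the subgroup $\langle \refl_1, \ldots, \refl_m \rangle$ generated by the letters of a decomposition. Consequently, if the Hurwitz action on reduced reflection decompositions of $w$ is transitive, then all such decompositions generate one and the same subgroup $G_w \leq W$. Using the standard fact that a reflection decomposition is reduced precisely when the corresponding roots are linearly independent, $G_w$ is a reflection subgroup of rank $m = \ell_\Refl(w) = \dim \Mov(w)$ in which $w$ is written as a product of $\mathrm{rank}(G_w)$ reflections. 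I would use this subgroup $G_w$ as the bridge between the two sides.

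For the direction ``parabolic quasi-Coxeter $\Rightarrow$ dual Matsumoto'', I would first reduce to the case where the parabolic subgroup is all of $W$. Suppose $w = \refl_1 \cdots \refl_m$ is reduced with $P := \langle \refl_1, \ldots, \refl_m\rangle$ parabolic. Reflection length inside a parabolic subgroup agrees with reflection length in $W$, so $w$ is a genuine quasi-Coxeter element of $P$. The key point is that every reduced $W$-decomposition of $w$ in fact consists of reflections of $P$: the roots of the letters of any reduced decomposition span $\Mov(w) \subseteq \Mov(P)$, and a reflection of $W$ whose root lies in the span of the roots of $P$ is already a reflection of $P$. Hence the $W$-Hurwitz orbits on reduced decompositions of $w$ coincide with the $P$-Hurwitz orbits, and the problem is reduced to proving transitivity for a quasi-Coxeter element of $P$.

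The heart of the matter, and the step I expect to be the main obstacle, is therefore transitivity of the Hurwitz action for a quasi-Coxeter element $w$ generating the whole group. Here I would argue by induction on the rank $n$ of $W$: given two reduced decompositions of $w$, I would use Hurwitz moves to arrange that they share a common first letter $\refl$, after which $\refl w$ is a (parabolic) quasi-Coxeter element of a rank-$(n-1)$ reflection subgroup, and the inductive hypothesis finishes the argument. Making ``arrange a common first letter'' work is exactly where the geometry of the root system and the classification of quasi-Coxeter elements enter; where no uniform argument is available this forces a type-by-type verification, appealing to the already-known transitivity for Coxeter elements as the base of the induction.

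Finally, for ``dual Matsumoto $\Rightarrow$ parabolic quasi-Coxeter'', I would use the common subgroup $G_w$ and show it is parabolic via the closure criterion: a rank-$m$ reflection subgroup is parabolic precisely when it contains every reflection of $W$ whose root lies in $\Mov(w)$, the span of its roots. Given a reflection $\refl$ with $\alpha_\refl \in \Mov(w)$, I would produce a reduced decomposition of $w$ beginning with $\refl$; since every letter of every reduced decomposition lies in $G_w$, this yields $\refl \in G_w$, giving closure and hence parabolicity. The delicate point in this last step is verifying that a reflection with root in the moved space can indeed be realised as the leading letter of a reduced decomposition, which again rests on the length and geometry properties of reduced decompositions used throughout.
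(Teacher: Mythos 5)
The first thing to note is that the paper contains no proof of this statement for you to be compared against: it is imported verbatim from the literature, cited as Theorem~1.1 of \cite{BaumGobet}, and is stated in this paper without a proof environment. Judged on its own merits, your sketch reproduces the skeleton of the known argument correctly. Your treatment of the direction ``dual Matsumoto $\Rightarrow$ parabolic quasi-Coxeter'' is essentially complete: Hurwitz moves preserve the subgroup generated by the letters; any reflection $t$ whose root lies in $\Mov(w)$ satisfies $\ell_\Refl(tw)=\ell_\Refl(w)-1$ and hence begins a reduced decomposition; and a reflection subgroup whose roots span $\Mov(w)$ and which contains every reflection with root in $\Mov(w)$ coincides with the pointwise stabiliser of $\Mov(w)^{\perp}$, which is parabolic. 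The reduction of the other direction to the case where the parabolic subgroup is all of $W$, via the closure property $\Phi_P = \Phi \cap \mathrm{span}(\Phi_P)$ of parabolic root subsystems, is also sound.

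The genuine gap is the step you yourself flag as the main obstacle: arranging a common first letter in two reduced decompositions of a quasi-Coxeter element. This is not a residual technicality to be settled by ``the geometry of the root system''; it is the entire content of the theorem, and your proposal contains no argument for it. In the source this paper cites, that step rests on a lattice-theoretic characterisation of quasi-Coxeter elements in Weyl groups (the roots occurring in any reduced decomposition of a quasi-Coxeter element generate the full root lattice $\Z\Phi$), a delicate induction built on that characterisation, and explicit computer verification for the non-crystallographic types $H_3$ and $H_4$. Moreover, your induction quietly uses a second nontrivial ingredient: to apply the inductive hypothesis to $tw$ you must know that $tw$ is again a \emph{parabolic quasi-Coxeter} element. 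That is the prefix result (Corollary~6.11 of \cite{BaumGobet}, restated in this paper as Lemma~\ref{PrefixQuasiCox}), which in the cited source is developed alongside the transitivity theorem rather than before it, so invoking it inside your induction requires checking the order of dependence to avoid circularity. In short: the easy direction and the reductions are fine, but the core of the hard direction is missing.
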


\subsection{Carter diagrams}\label{SubCarterDiagrams}

Let $W$ be a crystallographic Coxeter group (Weyl group),  that is of type $A_n$, $B_n$, $D_n$, $E_6$, $E_7$, or $E_8$ or of type $F_4$.
Let $w$ be an element of a Coxeter group $W$. By Carter \cite{Carter}, there exists a bipartite decomposition of $w$ over the set $T$ of reflections of the form $$w = w_1 w_2 = \underset{w_1}{\underbrace{t_1 t_2 \cdots t_k}}\ \underset{w_2}{\underbrace{t_{k+1} \cdots t_{k+h}}},$$ where $\ell_T(w) = k+h$ and bipartite means that any $t_i$ and $t_j$ ($i \neq j$) in the decomposition of $w_1$ and in
the decomposition of $w_2$ commute.

A Carter diagram $\Delta$ related to this bipartite decomposition of $w$ has vertices that correspond to the elements $t_i$ that appear in the decomposition of $w$, and two vertices $t_i$ and $t_j$ ($i \neq j$) are related by $o(t_it_j)-2$ edges. The Carter diagram is called admissible if each of its cycles contains an even number of vertices. Carter introduced these diagrams in order to classify the conjugacy classes in Weyl groups.

Carter diagrams on $n$ vertices, where $n$ is the cardinality of $R$, describe the conjugacy classes of quasi-Coxeter elements in $W$. Now we will describe a Carter diagram related to each conjugacy class of proper quasi-Coxeter elements that contains a chordless cycle of four vertices. Note that for a Coxeter element, the corresponding Carter diagram is the Coxeter diagram (which has no cycle).

\subsubsection*{Carter diagrams in type $D_n$}\label{SubsubCarterDn}

There are $\lfloor n/2 \rfloor$ conjugacy classes of quasi-Coxeter elements in type $D_n$ for $n \geq 4$. The following Carter diagram describes these conjugacy classes, where $1 \leq m \leq \lfloor n/2 \rfloor$.

\tikzstyle{every node}=[circle, draw, fill=black!50,
inner sep=0pt, minimum width=4pt]

\begin{small}
\begin{figure}[H]
	\begin{tikzpicture}
		
		\node[draw, shape=circle, label=below:$s_2$] (2) at (0,0) {};
		\node[draw, shape=circle, label=below:$s_3$] (3) at (1,0) {};
		\node[] (0) at (2,0) {};
		\node[] (00) at (3,0) {};
		\node[draw, shape=circle, label=below:$s_{m-1}$] (4) at (4,0) {};
		\node[draw, shape=circle, label=above:$s_m$] (5) at (5,0) {};
		
		\node[draw, shape=circle, label=above:$s_{m+1}$] (6) at (6,1) {};
		\node[draw, shape=circle, label=below:$s_1$] (7) at (6,-1) {};
		\node[draw, shape=circle,label=above:$s_{m+2}$] (8) at (7,0) {};
		\node[draw, shape=circle,label=below:$s_{m+3}$] (9) at (8,0) {};
		\node[] (10) at (9,0) {};
		\node[] (11) at (10,0) {};
		\node[draw,shape=circle, label=below:$s_{n-1}$] (12) at (11,0) {};
		\node[draw, shape=circle, label=below:$s_n$] (13) at (12.2,0) {};
		
		\draw[-] (2) to (3);
		\draw[-] (3) to (0);
		\draw[dashed,-] (0) to (00);
		\draw[-] (00) to (4);
		\draw[-] (4) to (5);
		\draw[-] (5) to (6);
		\draw[-] (5) to (7);
		\draw[-] (8) to (6);
		\draw[-] (8) to (7);
		\draw[-] (8) to (9);
		\draw[-] (9) to (10);
		\draw[dashed,-] (10) to (11);
		\draw[-] (11) to (12);
		\draw[-] (12) to (13);
		
	\end{tikzpicture}\caption{Carter diagram $\Delta_{m,n}$ of type $D_n$.}\label{FigureCarterDiagramDn}
\end{figure}
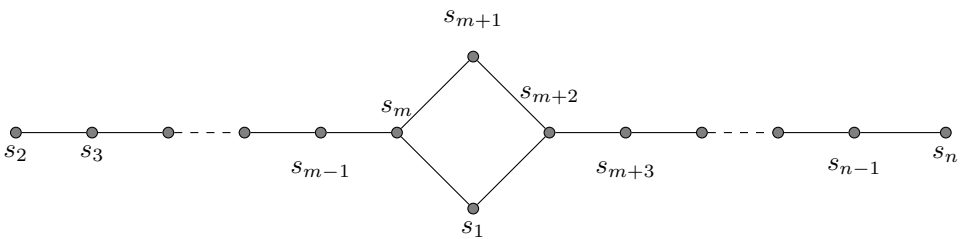
\end{small}

Now we discuss the exceptional cases. For the related Carter diagrams, we will use the notation of Carter (see \cite{Carter}).

\subsubsection*{Carter diagrams in type $E_6$}\label{SubsubCarterE6}

There are two conjugacy classes of proper quasi-Coxeter elements, whose Carter diagrams are illustrated in Figure~\ref{FigureCarterDiagramsE6}.

\tikzstyle{every node}=[circle, draw, fill=black!50,
inner sep=0pt, minimum width=4pt]

\begin{small}
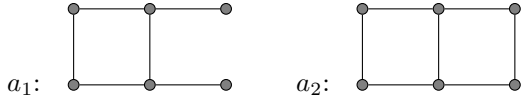
\begin{figure}[H]
	\begin{center}
		\begin{tabular}{lcl}
			$a_1$:\quad \begin{tikzpicture}
				
				\node[] (00) at (0,0) {};
				\node[] (10) at (1,0) {};
				\node[] (20) at (2,0) {};
				\node[] (01) at (0,1) {};
				\node[] (11) at (1,1) {};
				\node[] (21) at (2,1) {};
				
				\draw[-] (00) to (10);
				\draw[-] (10) to (20);
				\draw[-] (01) to (11);
				\draw[-] (11) to (21);
				\draw[-] (00) to (01);
				\draw[-] (10) to (11);
				
			\end{tikzpicture}
			&\quad&
			$a_2$:\quad \begin{tikzpicture}
				
				\node[draw,shape=circle] (00) at (0,0) {};
				\node[draw,shape=circle] (10) at (1,0) {};
				\node[draw,shape=circle] (20) at (2,0) {};
				\node[draw,shape=circle] (01) at (0,1) {};
				\node[draw,shape=circle] (11) at (1,1) {};
				\node[draw,shape=circle] (21) at (2,1) {};
				
				\draw[-] (00) to (10);
				\draw[-] (10) to (20);
				\draw[-] (01) to (11);
				\draw[-] (11) to (21);
				\draw[-] (00) to (01);
				\draw[-] (10) to (11);
				\draw[-] (20) to (21);
				
			\end{tikzpicture}
		\end{tabular}
	\end{center}\caption{Carter diagrams $E_6(a_i)$ for $i=1,2$.}\label{FigureCarterDiagramsE6}
\end{figure}
\end{small}

\subsubsection*{Carter diagrams in type $E_7$}\label{SubsubCarterE7}

There are four conjugacy classes of proper quasi-Coxeter elements, whose Carter diagrams are illustrated in Figure~\ref{FigureCarterDiagramsE7}.

\begin{small}
\begin{figure}[H]
	\begin{center}
		\begin{tabular}{lcl}
			$a_1$:\quad		\begin{tikzpicture}
				
				\node[draw,shape=circle] (00) at (0,0) {};
				\node[draw,shape=circle] (10) at (1,0) {};
				\node[draw,shape=circle] (20) at (2,0) {};
				\node[draw,shape=circle] (01) at (0,1) {};
				\node[draw,shape=circle] (11) at (1,1) {};
				\node[draw,shape=circle] (21) at (2,1) {};
				\node[draw,shape=circle] (31) at (3,1) {};
				
				\draw[-] (00) to (10);
				\draw[-] (10) to (20);
				\draw[-] (01) to (11);
				\draw[-] (11) to (21);
				\draw[-] (00) to (01);
				\draw[-] (10) to (11);
				\draw[-] (31) to (21);
				
			\end{tikzpicture}
			&\quad&
			$a_2$:\quad	\begin{tikzpicture}
				
				\node[draw,shape=circle] (00) at (0,0) {};
				\node[draw,shape=circle] (10) at (1,0) {};
				\node[draw,shape=circle] (20) at (2,0) {};
				\node[draw,shape=circle] (01) at (0,1) {};
				\node[draw,shape=circle] (11) at (1,1) {};
				\node[draw,shape=circle] (21) at (2,1) {};
				\node[draw,shape=circle] (m11) at (-1,1) {};
				
				\draw[-] (00) to (10);
				\draw[-] (10) to (20);
				\draw[-] (01) to (11);
				\draw[-] (11) to (21);
				\draw[-] (00) to (01);
				\draw[-] (10) to (11);
				\draw[-] (m11) to (01);
				
			\end{tikzpicture}
			\\
			&\quad&\\
			$a_3$:\quad \begin{tikzpicture}
				
				\node[draw,shape=circle] (00) at (0,0) {};
				\node[draw,shape=circle] (10) at (1,0) {};
				\node[draw,shape=circle] (20) at (2,0) {};
				\node[draw,shape=circle] (01) at (0,1) {};
				\node[draw,shape=circle] (11) at (1,1) {};
				\node[draw,shape=circle] (21) at (2,1) {};
				\node[draw,shape=circle] (31) at (3,1) {};
				
				\draw[-] (00) to (10);
				\draw[-] (10) to (20);
				\draw[-] (01) to (11);
				\draw[-] (11) to (21);
				\draw[-] (00) to (01);
				\draw[-] (10) to (11);
				\draw[-] (20) to (21);
				\draw[-] (31) to (21);
				
			\end{tikzpicture}
			&\quad&
			$a_4$:\quad		\begin{tikzpicture}
				
				\node[draw,shape=circle] (00) at (0,0) {};
				\node[draw,shape=circle] (10) at (1,0) {};
				\node[draw,shape=circle] (20) at (2,0) {};
				\node[draw,shape=circle] (01) at (0,1) {};
				\node[draw,shape=circle] (11) at (1,1) {};
				\node[draw,shape=circle] (21) at (2,1) {};
				\node[draw,shape=circle] (12) at (1,2) {};
				
				\draw[-] (00) to (10);
				\draw[-] (10) to (20);
				\draw[-] (01) to (11);
				\draw[-] (11) to (21);
				\draw[-] (00) to (01);
				\draw[-] (10) to (11);
				\draw[-] (20) to (21);
				\draw[-] (12) to (21);
				\draw[-] (12) to (01);
				
			\end{tikzpicture}
		\end{tabular}
	\end{center}\caption{Carter diagrams $E_7(a_i)$ for $i=1,\ldots, 4$}\label{FigureCarterDiagramsE7}
\end{figure}
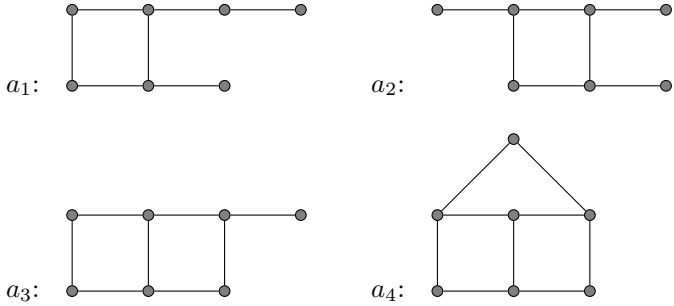
\end{small}

\subsubsection*{Carter diagrams in type $E_8$}

There are eight conjugacy classes of proper quasi-Coxeter elements, whose Carter diagrams are illustrated in Figure~\ref{FigureCarterDiagramsE8}.

\begin{small}
\begin{figure}[H]
	\begin{center}
		\begin{tabular}{clc}
			$a_1$:\quad		\begin{tikzpicture}
				
				\node[draw,shape=circle] (00) at (0,0) {};
				\node[draw,shape=circle] (10) at (1,0) {};
				\node[draw,shape=circle] (20) at (2,0) {};
				\node[draw,shape=circle] (01) at (0,1) {};
				\node[draw,shape=circle] (11) at (1,1) {};
				\node[draw,shape=circle] (21) at (2,1) {};
				\node[draw,shape=circle] (31) at (3,1) {};
				\node[draw,shape=circle] (41) at (4,1) {};
				
				\draw[-] (00) to (10);
				\draw[-] (10) to (20);
				\draw[-] (01) to (11);
				\draw[-] (11) to (21);
				\draw[-] (00) to (01);
				\draw[-] (10) to (11);
				\draw[-] (31) to (21);
				\draw[-] (41) to (31);
				
			\end{tikzpicture}
			&\quad&
			$a_2$:\quad \begin{tikzpicture}
				
				\node[draw,shape=circle] (00) at (0,0) {};
				\node[draw,shape=circle] (10) at (1,0) {};
				\node[draw,shape=circle] (20) at (2,0) {};
				\node[draw,shape=circle] (01) at (0,1) {};
				\node[draw,shape=circle] (11) at (1,1) {};
				\node[draw,shape=circle] (21) at (2,1) {};
				\node[draw,shape=circle] (31) at (3,1) {};
				\node[draw,shape=circle] (m10) at (-1,0) {};
				
				\draw[-] (00) to (10);
				\draw[-] (10) to (20);
				\draw[-] (01) to (11);
				\draw[-] (11) to (21);
				\draw[-] (00) to (01);
				\draw[-] (10) to (11);
				\draw[-] (31) to (21);
				\draw[-] (m10) to (00);
				
			\end{tikzpicture}
			\\
			&\quad &\\
			$a_3$:\quad	\begin{tikzpicture}
				
				\node[draw,shape=circle] (00) at (0,0) {};
				\node[draw,shape=circle] (10) at (1,0) {};
				\node[draw,shape=circle] (20) at (2,0) {};
				\node[draw,shape=circle] (01) at (0,1) {};
				\node[draw,shape=circle] (11) at (1,1) {};
				\node[draw,shape=circle] (21) at (2,1) {};
				\node[draw,shape=circle] (m11) at (-1,1) {};
				\node[draw,shape=circle] (m10) at (-1,0) {};
				
				\draw[-] (00) to (10);
				\draw[-] (10) to (20);
				\draw[-] (01) to (11);
				\draw[-] (11) to (21);
				\draw[-] (00) to (01);
				\draw[-] (10) to (11);
				\draw[-] (m11) to (01);
				\draw[-] (m10) to (00);
				
			\end{tikzpicture}
			&\quad&
			$a_4$:\quad \begin{tikzpicture}
				
				\node[draw,shape=circle] (00) at (0,0) {};
				\node[draw,shape=circle] (10) at (1,0) {};
				\node[draw,shape=circle] (20) at (2,0) {};
				\node[draw,shape=circle] (01) at (0,1) {};
				\node[draw,shape=circle] (11) at (1,1) {};
				\node[draw,shape=circle] (21) at (2,1) {};
				\node[draw,shape=circle] (31) at (3,1) {};
				\node[draw,shape=circle] (41) at (4,1) {};
				
				\draw[-] (00) to (10);
				\draw[-] (10) to (20);
				\draw[-] (01) to (11);
				\draw[-] (11) to (21);
				\draw[-] (00) to (01);
				\draw[-] (10) to (11);
				\draw[-] (20) to (21);
				\draw[-] (31) to (21);
				\draw[-] (31) to (41);
				
			\end{tikzpicture}
			\\
			&\quad&\\
			$a_5$:\quad \begin{tikzpicture}
				
				\node[draw,shape=circle] (00) at (0,0) {};
				\node[draw,shape=circle] (10) at (1,0) {};
				\node[draw,shape=circle] (20) at (2,0) {};
				\node[draw,shape=circle] (01) at (0,1) {};
				\node[draw,shape=circle] (11) at (1,1) {};
				\node[draw,shape=circle] (21) at (2,1) {};
				\node[draw,shape=circle] (31) at (3,1) {};
				\node[draw,shape=circle] (m10) at (-1,0) {};
				
				\draw[-] (00) to (10);
				\draw[-] (10) to (20);
				\draw[-] (01) to (11);
				\draw[-] (11) to (21);
				\draw[-] (00) to (01);
				\draw[-] (10) to (11);
				\draw[-] (20) to (21);
				\draw[-] (31) to (21);
				\draw[-] (00) to (m10);
				
			\end{tikzpicture}
			&\quad&
			$a_6$:\quad		\begin{tikzpicture}
				
				\node[draw,shape=circle] (00) at (0,0) {};
				\node[draw,shape=circle] (10) at (1,0) {};
				\node[draw,shape=circle] (20) at (2,0) {};
				\node[draw,shape=circle] (01) at (0,1) {};
				\node[draw,shape=circle] (11) at (1,1) {};
				\node[draw,shape=circle] (21) at (2,1) {};
				\node[draw,shape=circle] (31) at (3,1) {};
				\node[draw,shape=circle] (30) at (3,0) {};
				
				\draw[-] (00) to (10);
				\draw[-] (10) to (20);
				\draw[-] (01) to (11);
				\draw[-] (11) to (21);
				\draw[-] (00) to (01);
				\draw[-] (10) to (11);
				\draw[-] (20) to (21);
				\draw[-] (31) to (21);
				\draw[-] (31) to (30);
				\draw[-] (20) to (30);
				
			\end{tikzpicture}
			\\
			&\quad&\\
			$a_7$:\quad \begin{tikzpicture}
				
				\node[draw,shape=circle] (00) at (0,0) {};
				\node[draw,shape=circle] (10) at (1,0) {};
				\node[draw,shape=circle] (20) at (2,0) {};
				\node[draw,shape=circle] (01) at (0,1) {};
				\node[draw,shape=circle] (11) at (1,1) {};
				\node[draw,shape=circle] (21) at (2,1) {};
				\node[draw,shape=circle] (12) at (1,2) {};
				\node[draw,shape=circle] (13) at (2,2) {};
				
				\draw[-] (00) to (10);
				\draw[-] (10) to (20);
				\draw[-] (01) to (11);
				\draw[-] (11) to (21);
				\draw[-] (00) to (01);
				\draw[-] (10) to (11);
				\draw[-] (20) to (21);
				\draw[-] (12) to (21);
				\draw[-] (12) to (01);
				\draw[-] (13) to (12);
				
			\end{tikzpicture}
			&\quad&
			$a_8$:\quad \begin{tikzpicture}[scale=.7]
				
				\node[draw,shape=circle] (1) at (0,2) {};
				\node[draw,shape=circle] (2) at (2,2) {};
				\node[draw,shape=circle] (3) at (0,0) {};
				\node[draw,shape=circle] (4) at (2,0) {};
				\node[draw,shape=circle] (5) at (0.8,2.8) {};
				\node[draw,shape=circle] (6) at (2.8,2.8) {};
				\node[draw,shape=circle] (7) at (0.8,1) {};
				\node[draw,shape=circle] (8) at (2.8,1) {};
				
				\draw[-] (1) to (2);
				\draw[-] (1) to (5);
				\draw[-] (1) to (3);
				\draw[-] (2) to (6);
				\draw[-] (2) to (4);
				\draw[-] (3) to (7);
				\draw[-] (3) to (4);
				\draw[-] (4) to (2);
				\draw[-] (4) to (8);
				\draw[-] (5) to (6);
				\draw[-] (5) to (1);
				\draw[-] (5) to (7);	
				\draw[-] (7) to (8);
				\draw[-] (8) to (6);	
				
			\end{tikzpicture}
		\end{tabular}
	\end{center}\caption{Carter diagrams $E_8(a_i)$ for $i=1,\ldots,8$}\label{FigureCarterDiagramsE8}
\end{figure}
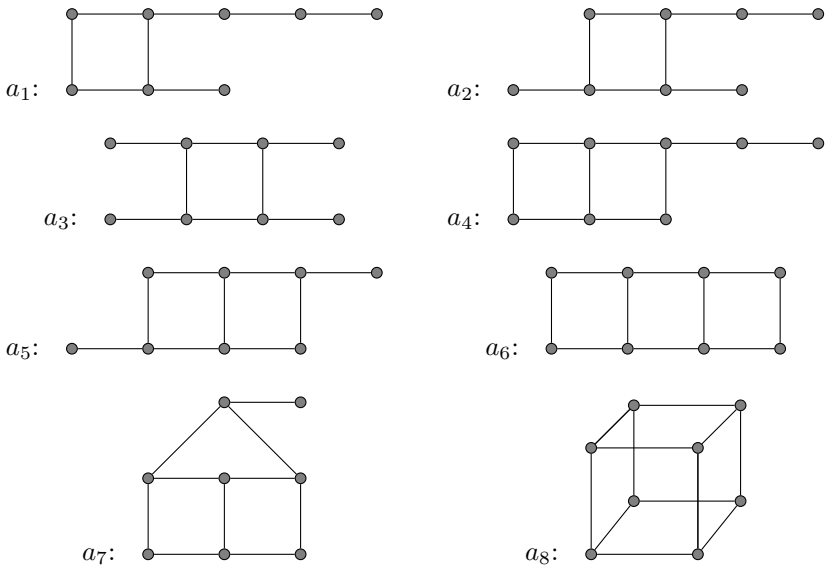
\end{small}

\subsubsection*{Carter diagrams in type $F_4$}\label{SubsubCarterF4}

There is one conjugacy class of proper quasi-Coxeter elements, whose Carter diagram is illustrated in Figure~\ref{FigureCarterDiagramF4a1}.

\begin{figure}[H]
	\begin{center}
		\begin{tikzpicture}
			
			\node[draw,shape=circle] (00) at (0,0) {};
			\node[draw,shape=circle] (1,1) at (1,1) {};
			\node[draw,shape=circle] (m11) at (-1,1) {};
			\node[draw,shape=circle] (02) at (0,2) {};
			
			\draw[-] (00) to (m11);
			\draw[-] (11) to (02);
			\draw[-,double] (m11) to (02);
			\draw[-,double] (11) to (00);
			
		\end{tikzpicture} 
	\end{center}\caption{Carter diagram $F_4(a_1)$.}\label{FigureCarterDiagramF4a1}
\end{figure}
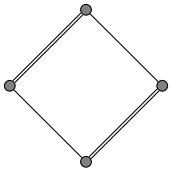

\subsection{Non-crossing partitions for quasi-Coxeter elements}\label{SubLattice}
Let $V$ be the ambient space of the Tits representation  $W \leq GL(V)$  (see \cite[Chapter V, 4.3]{Bourbaki}). Further, let $w \in W$ be a quasi-Coxeter element in $W$.
In this section we define the set of non-crossing partitions $[1,w]$,
and embed it into the set of parabolic subgroups of $W$ as well as into the set of subspaces of $V$. The two sets naturally both carry the structure of posets. We will see that each pair is either isomorphic or anti-isomorphic.

We also analyse the lattice property for the poset of non-crossing partitions for $w$
for the finite Coxeter systems and all the  quasi-Coxeter elements.

\subsubsection*{The poset $([1,w], \preceq)$}

We start by defining left and right division in $W$.

\begin{definition}\label{DefRelation}
	
	We say that $x \in W$ is a left divisor of $w$, and write $x \preceq w$, if $w = xy$ 
	and  $\ell_{\Refl}(w) = \ell_{\Refl}(v) + \ell_{\Refl}(u)$ for some with $y \in W$.
\end{definition}

Notice that $x \preceq w$  holds if and only if  $\ell_{\Refl}(w) = \ell_{\Refl}(x) + \ell_{\Refl}(x^{-1}w)$, which holds if and only if every reduced $T$-decomposition of $x$ can be extended to a reduced $T$-decomposition of $w$.

\begin{definition}\label{DefAbsoluteOrder}
The relation $\preceq$ is an order relation and is called the  absolute order on $W$.
	The interval $[1,w]$ related to an element $w \in W$ is defined to be the set of left divisors of $w$ for $\preceq$.
	We also call $[1,w]$ the set of non-crossing partitions for $w$.
	
	We define division from the right similarly. We say that $y$ is a right divisor of $w$, and write $y \preceq_{r} w$, if  there is $x$ in $W$ such that $w = xy$
	 and $\ell_{\Refl}(w) = \ell_{\Refl}(v) + \ell_{\Refl}(u)$. We also define the interval $[1,w]_{r}$ of right divisors of an element $w \in W$.
\end{definition}
	
It follows that the pair $([1,w], \preceq)$ is a poset. In Theorem~\ref{ThmLattice}, we will show that, apart from in 
type $H_3$ and $H_4$, it is a lattice if and only if the element $w$ is a Coxeter element. In type $H_3$, the poset $[1,w]$ is a lattice for each quasi-Coxeter element $w \in W$.\\
	
We recall Corollary~6.11 of \cite{BaumGobet}, which we state here as a lemma.
	
\begin{lemma}\label{PrefixQuasiCox}
	Let $w \in W$ be a quasi-Coxeter element. Then every element in $[1,w]$ is a parabolic quasi-Coxeter element.
\end{lemma}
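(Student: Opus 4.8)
The plan is to prove the statement by reducing it to the characterization of parabolic quasi-Coxeter elements given in Theorem~\ref{LemmaTransitivityQCox}, namely that an element is a parabolic quasi-Coxeter element precisely when the dual Matsumoto property (transitivity of the Hurwitz action on reduced $T$-decompositions) holds for it. So the task becomes: given a quasi-Coxeter element $w$ and an arbitrary $x \in [1,w]$, establish that the Hurwitz action is transitive on the set of reduced $T$-decompositions of $x$.

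First I would unpack what membership $x \in [1,w]$ supplies. By the remark following Definition~\ref{DefRelation}, $x \preceq w$ means exactly that every reduced $T$-decomposition of $x$ extends to a reduced $T$-decomposition of $w$; concretely, writing $\ell_T(w) = \ell_T(x) + \ell_T(x^{-1}w)$, we may fix reduced decompositions $x = t_1 \cdots t_j$ and $x^{-1}w = t_{j+1} \cdots t_k$ and concatenate them to a reduced decomposition $w = t_1 \cdots t_k$. Since $w$ is a quasi-Coxeter element, Theorem~\ref{LemmaTransitivityQCox} tells us the Hurwitz action of $\mathcal{B}_k$ is transitive on all reduced decompositions of $w$.

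The key step is then a \emph{prefix-stability} argument for the Hurwitz action. Given any two reduced decompositions of $x$, say $x = t_1 \cdots t_j$ and $x = t_1' \cdots t_j'$, I would extend both by the \emph{same} fixed reduced decomposition $x^{-1}w = t_{j+1}\cdots t_k$ of the complementary factor, obtaining two reduced decompositions of $w$. By transitivity for $w$ there is a braid taking one to the other; the goal is to replace this braid by one that only acts on the first $j$ positions, i.e.\ that is supported on the subgroup $\mathcal{B}_j \leq \mathcal{B}_k$ generated by $\boldsymbol{r}_1,\dots,\boldsymbol{r}_{j-1}$, and thereby descends to a Hurwitz equivalence between the two decompositions of $x$ itself. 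The standard tool here is that the Hurwitz action preserves the product of any prefix up to the induced action, together with the fact that in a reduced decomposition no cancellation occurs, so one can push all braid moves crossing the boundary between positions $j$ and $j+1$ to one side. This is essentially the statement that $[1,w]$ is closed under taking the parabolic-subgroup/non-crossing-partition structure, and I expect it has already been set up (via the embedding of $[1,w]$ into parabolic subgroups and subspaces mentioned in Section~\ref{SubLattice}) so that the subgroup $\langle t_1,\dots,t_j\rangle$ generated by a reduced decomposition of $x$ is well-defined independent of the decomposition.

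The main obstacle I anticipate is exactly this prefix-closure of the Hurwitz action: showing that transitivity for the whole element $w$ forces transitivity on the prefix $x$, rather than only on some combined object. The delicate point is that a braid equating the two extended decompositions of $w$ might genuinely mix the first $j$ entries with the last $k-j$ entries, so one must argue that such mixing can always be undone — for instance by appealing to the uniqueness of the parabolic subgroup $P = \langle t_1, \ldots, t_j \rangle$ attached to $x$ (so that the reflections occurring in any reduced decomposition of $x$ all lie in $P$, and $x$ is by Definition~\ref{DefQuasiCoxInGen} a \emph{parabolic} quasi-Coxeter element of the smaller system $(P,T\cap P)$). Once one knows $P$ is a genuine parabolic subgroup and $x$ is a quasi-Coxeter element of $P$ in the full sense, transitivity of the Hurwitz action on reduced decompositions of $x$ follows by applying Theorem~\ref{LemmaTransitivityQCox} inside $P$, which closes the argument. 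I would therefore structure the proof around identifying $P$ and verifying it is parabolic, which is the real content and the step most likely to require the structural results cited from \cite{BaumGobet}.
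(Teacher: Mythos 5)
There is a genuine gap, and it is worth noting first that the paper itself offers no proof of this lemma: it is explicitly recalled as Corollary~6.11 of \cite{BaumGobet}, so the ``paper's proof'' is a citation to an external, substantially nontrivial result. Your reduction of the statement to Hurwitz transitivity on the reduced $T$-decompositions of $x$ (via the equivalence in Theorem~\ref{LemmaTransitivityQCox}) is a legitimate strategy, but the step that carries all the weight --- your ``prefix-stability'' claim, that transitivity of the Hurwitz action on reduced decompositions of $w$ forces transitivity on reduced decompositions of any prefix $x \preceq w$ --- is asserted rather than proved. A braid in $\mathcal{B}_k$ taking one extended decomposition $(t_1,\dots,t_j,t_{j+1},\dots,t_k)$ to the other can genuinely mix the first $j$ entries with the last $k-j$, and there is no formal mechanism (``pushing moves to one side'') that converts it into an element of $\mathcal{B}_j$; this implication is not a soft consequence of transitivity for $w$, and in \cite{BaumGobet} it is obtained only through structural input specific to finite Coxeter groups (root-lattice arguments for the Weyl group case, induction on rank, and computer verification for $H_3$ and $H_4$).

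The fallback you propose is moreover circular within the logical architecture of this paper. You invoke the well-definedness and parabolicity of $P=\langle t_1,\dots,t_j\rangle$ to repair the mixing problem, and then want to apply Theorem~\ref{LemmaTransitivityQCox} inside $P$. But that well-definedness is exactly Lemma~\ref{PropParaSbgrp}(a), which the paper deduces \emph{from} Lemma~\ref{PrefixQuasiCox} together with Theorem~\ref{LemmaTransitivityQCox}; equivalently, the corresponding statement in \cite{BaumGobet} (Proposition~4.3) takes as hypothesis that $x$ is already a parabolic quasi-Coxeter element, which is the very thing to be shown. Note also that Definition~\ref{DefQuasiCoxInGen}(b) only requires \emph{some} reduced decomposition of $x$ to generate a parabolic subgroup, so the real content --- entirely missing here --- is producing one reduced decomposition whose reflections generate a parabolic subgroup; nothing in your argument establishes parabolicity of $\langle t_1,\dots,t_j\rangle$ for even a single decomposition. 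The correct attitude toward this lemma, in the context of this paper, is to cite \cite[Corollary~6.11]{BaumGobet} rather than to attempt a self-contained derivation from Hurwitz transitivity of $w$ alone.
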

	
\subsubsection*{The poset $({\cal P}(w), \leq)$}

Let $x$ be an element in the interval $[1,w]$. According to Lemma~\ref{PrefixQuasiCox}, $x$ is a parabolic quasi-Coxeter element. Therefore, there exists a reduced $T$-decomposition $x = t_1 \cdots t_k$ such that $P_x:= \langle t_1, \ldots , t_k \rangle$ is a parabolic subgroup of $W$, where $t_1, \cdots t_k \in T$. Recall that the parabolic closure of $x$ is the intersection of all parabolic subgroups that contain $x$. It is again a parabolic subgroup.

\begin{lemma}\label{PropParaSbgrp}
The following  properties hold.
\begin{itemize}
\item[(a)] $P_x = \langle t_1, \dots, t_k \rangle$ if $x = t_1 \cdots t_k$ is a  reduced $T$-decomposition with  $t_1, \ldots, t_k \in T$.
\item[(b)] $P_x = \langle t\in T \mid t \preceq x\rangle$.
\item[(c)] $P_x = \langle y \in W \mid y \preceq x\rangle$.
\item[(d)] $P_x$ is the parabolic closure of $x$.
\item[(e)]   $P_x = \langle w \in W \mid  w(v) = v$ for all $v \in \fix(x) \rangle$ where $\fix(x) = \{v \in V \mid x(v) = v\}$.
\end{itemize}
\end{lemma}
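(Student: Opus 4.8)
The plan is to establish the five identities in a cascade: prove (a) from the transitivity of the Hurwitz action, deduce (b) and (c) from (a) by sliding reflections to the front, and then read off (e) and (d) from the geometry of the Tits representation. Two standard inputs carry most of the weight. The first is Carter's geometric interpretation of reflection length \cite{Carter}: if $x=\refl_1\cdots\refl_k$ is a reduced $T$-decomposition with roots $\alpha_{\refl_1},\dots,\alpha_{\refl_k}$, then $\Mov(x)=\im(x-1)=\langle\alpha_{\refl_1},\dots,\alpha_{\refl_k}\rangle$, these roots are linearly independent, $\ell_\Refl(x)=\dim\Mov(x)=\dim V-\dim\fix(x)$, and hence $\fix(x)=\bigcap_{i=1}^{k}H_{\refl_i}$, where $H_{\refl_i}$ is the reflecting hyperplane of $\refl_i$. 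The second is Steinberg's theorem for finite reflection groups: the pointwise stabiliser $W_U=\{g\in W\mid g|_U=\mathrm{id}\}$ of a subspace $U\subseteq V$ is generated by the reflections whose hyperplane contains $U$, it is a parabolic subgroup, and every parabolic subgroup $P$ satisfies $P=W_{\fix(P)}$.

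For (a), Lemma~\ref{PrefixQuasiCox} tells us $x$ is a parabolic quasi-Coxeter element, so by Theorem~\ref{LemmaTransitivityQCox} the Hurwitz action is transitive on the reduced $T$-decompositions of $x$. Since $\boldsymbol{r}_i$ replaces $(\refl_i,\refl_{i+1})$ by $(\refl_i\refl_{i+1}\refl_i,\refl_i)$ and $\langle\refl_i\refl_{i+1}\refl_i,\refl_i\rangle=\langle\refl_i,\refl_{i+1}\rangle$, the subgroup generated by the entries of a tuple is invariant under the Hurwitz action; transitivity then forces every reduced $T$-decomposition of $x$ to generate the same subgroup, which is the well-defined parabolic subgroup $P_x$. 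This is (a).

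For (b), write $D(x)=\{\refl\in\Refl\mid\refl\preceq x\}$. Any $\refl\preceq x$ heads a reduced $T$-decomposition, so $\refl\in P_x$ by (a), giving $\langle D(x)\rangle\subseteq P_x$. Conversely, fixing a reduced decomposition $x=\refl_1\cdots\refl_k$ and applying the inverse moves $\boldsymbol{r}_{i-1}^{-1},\dots,\boldsymbol{r}_1^{-1}$ slides $\refl_i$ into the first position of a still-reduced decomposition, so $\refl_i\preceq x$ and $P_x=\langle\refl_1,\dots,\refl_k\rangle\subseteq\langle D(x)\rangle$; this proves (b). Statement (c) is then immediate: with $D_W(x)=\{y\in W\mid y\preceq x\}$, each such $y$ heads a reduced decomposition and so lies in $P_x$ by (a), while $D(x)\subseteq D_W(x)$, so $\langle D(x)\rangle\subseteq\langle D_W(x)\rangle\subseteq P_x=\langle D(x)\rangle$.

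For (e) and (d), the pivot is the identity $\fix(P_x)=\fix(x)$: the inclusion $\fix(P_x)\subseteq\fix(x)$ holds as $x\in P_x$, and the reverse holds because $\fix(x)=\bigcap_i H_{\refl_i}$ shows each generator $\refl_i$ of $P_x$ fixes $\fix(x)$ pointwise. As $P_x$ is parabolic, Steinberg gives $P_x=W_{\fix(P_x)}=W_{\fix(x)}$, which is exactly (e). Finally (d) follows formally: $P_x$ is a parabolic subgroup containing $x$, so the parabolic closure is contained in $P_x$; and for any parabolic $Q$ with $x\in Q$ we have $\fix(Q)\subseteq\fix(x)$, whence $P_x=W_{\fix(x)}\subseteq W_{\fix(Q)}=Q$ by the anti-monotonicity of $U\mapsto W_U$ together with $Q=W_{\fix(Q)}$. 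Thus $P_x$ lies in every parabolic containing $x$ and equals the parabolic closure. I expect the genuine content to sit not in any of these manipulations but in the two geometric inputs above — especially the self-stabilising property $P=W_{\fix(P)}$ and the equivalence of reducedness with linear independence of the roots — both of which depend essentially on the finiteness of $W$; granted those, (a)--(c) are pure Hurwitz combinatorics and (d)--(e) reduce to the order-reversing behaviour of $U\mapsto W_U$.
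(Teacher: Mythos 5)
Your proof is correct, and for parts (a)--(c) it is essentially the paper's argument: (a) rests on the same two ingredients (Lemma~\ref{PrefixQuasiCox} plus Hurwitz transitivity from Theorem~\ref{LemmaTransitivityQCox}, together with invariance of the generated subgroup under Hurwitz moves --- the paper outsources this last point to \cite[Proposition~4.3]{BaumGobet}, which you instead prove inline), and (b), (c) use the identical slide-to-the-front argument. Where you genuinely diverge is in (d) and (e). The paper disposes of these by citation --- \cite[Theorem~1.4]{BDSW} for (d), and \cite[Section~4]{BaumGobet} together with Bourbaki for (e) --- whereas you give a self-contained geometric derivation: Carter's linear-independence lemma gives $\fix(x)=\bigcap_i H_{t_i}$, hence $\fix(P_x)=\fix(x)$; Steinberg's theorem (parabolic subgroups of a finite reflection group are exactly pointwise stabilisers of subspaces, so $P=W_{\fix(P)}$) then yields (e); and (d) follows formally from (e) by the anti-monotonicity of $U\mapsto W_U$. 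What your route buys is a proof readable without the external references and a cleaner logical structure, with (d) a corollary of (e) rather than an independent citation; what it costs is generality --- as you yourself note, Steinberg's theorem and the orthogonal-geometry input tie the argument to finite $W$, while the cited result of \cite{BDSW} characterises parabolic closures by methods valid beyond the finite case. Since this paper only concerns finite Coxeter groups, that restriction is harmless here.
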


\begin{proof} Statement (a) is a consequence of \cite[Proposition~4.3]{BaumGobet} and Theorem~\ref{LemmaTransitivityQCox}.  
	By the definition $P_x$ is a subgroup of  $P :=  \langle t\in T \mid t \preceq x\rangle$. If $t \preceq x$ for some $t \in T$, then there is a reduced $T$-decomposition of $x$ including $t$. Therefore, $t$ is in $P_x$ by (a), which shows 
	$P \subseteq P_x$, and $P_x = P$. This shows (b). The proof of (c) is done analogously.  Assertions  (d) and (e) follow from   \cite[Theorem~1.4]{BDSW} and  \cite[Section~4]{BaumGobet}, respectively (for (e) see also
	\cite[Chapter V, 1.6]{Bourbaki}).
\end{proof}

Thus, the definition of $P_x$ is independent of the chosen reduced $T$-decomposition of $x$.
We set ${\cal P}(w):= \{ P_x~|~ x \in [1,w]\}$.  Then the  subgroup relation $\leq$    defines an order relation on ${\cal P}(w)$, and  $({\cal P}(w), \leq)$ is a poset.

\subsubsection*{The poset $({\cal F}(w),\subseteq)$}

For each $x \in [1,w]$, consider the subspace $\Fix(x):= \ker(x- id) =  \{v \in V~|~ x(v) = v\}$ of $V$  (see Lemma~\ref{PropParaSbgrp}(e)). Consider the commutator subgroup $\Mov(x):= [V,x] =  \im(x - id)$ in the semi-direct product
$V \rtimes \langle x \rangle$. Then, due to Maschke's theorem $V = \Fix(x) \oplus \Mov(x)$.  It is an easy calculation to see that $\Fix(x)$ and  $\Mov(x)$ are perpendicular  and therefore
$\Fix(x) = \Mov(x)^\perp$.  More generally, we  consider for $X \subseteq W$ the subspace $\Fix(X):=   \{v \in V~|~ x(v) = v ~\mbox{for all}~x \in X\}$  of $V$. Note that for the parabolic subgroup $P_x$, we have $\Fix(P_x) = \Fix(x)$ for every $x \in [1,w]$.

Set ${\cal F}(w):= \{\Fix(x)~|~x \in [1,w]\}$. Then  the inclusion $\subseteq$  on sets 
defines an order relation on ${\cal F}(w)$, and 
$({\cal F}(w), \subseteq)$ is a poset. Further, notice that ${\cal F}(w)$ contains all the reflection hyperplanes of $W$,  as $T$ is contained in $[1,w]$ by  \cite[Lemma~1.2.1 (i)]{BessisDualMonoid}.
By 	\cite[Chapter V,  1.6]{Bourbaki} every subspace $F$ of  ${\cal F}(w)$ is the intersection of some of the reflection hyperplanes. 
Also note that ${\cal F}(w)$  does not necessarily  contain the intersection of any two of its elements; see Corollary~\ref{IntersectionSubspace}.

\subsubsection*{Isomorphisms and anti-isomorphisms between these posets}

Next, we show that the posets $([1,w], \preceq)$ and  $({\cal F}(w), \subseteq )$ are anti-isomorphic.
Brady and Watt  observed that if $x \preceq z$  for some $x,z \in W$, then 
the action of $x$ on $V/\Fix(x)$ is determined by the action of $z$ on $\Mov(z)$ 
and concluded from this that if $a \in O(V)$ and $U$ a subspace of $\Mov(a)$,
then there is a unique $b \in O(V)$ such that $b \preceq a$ and $\Mov(b) = U$
(see \cite[Theorem~1]{BW02}). From this, we derive the following anti-isomorphism.

\begin{proposition}\label{AntiIsoIntervallFix}
	The map $ \fix:  [1,w]  \rightarrow    {\cal F}(w), x \mapsto \Fix(x)$
	is an anti-isomorphism between the posets $([1,w], \preceq)$ and $ ({\cal F}(w), \subseteq)$.
\end{proposition}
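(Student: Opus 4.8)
The plan is to factor the map $\fix$ through the move map $\Mov$ and to transport the whole question into the Brady--Watt partial order on $O(V)$, where Theorem~1 of \cite{BW02} does the decisive work. Since $\Fix(x)=\Mov(x)^{\perp}$ (recorded above) and orthogonal complementation is an inclusion-reversing involution on the subspaces of $V$, the assertion is equivalent to saying that $x\mapsto\Mov(x)$ is a poset \emph{isomorphism} from $([1,w],\preceq)$ onto $(\{\Mov(x)\mid x\in[1,w]\},\subseteq)$. Surjectivity onto $\mathcal{F}(w)$ is immediate from the definition $\mathcal{F}(w)=\{\Fix(x)\mid x\in[1,w]\}$, so the three things left to prove are injectivity of $x\mapsto\Mov(x)$ on $[1,w]$ and the two implications $x\preceq z\iff\Mov(x)\subseteq\Mov(z)$ for $x,z\in[1,w]$.

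First I would identify $\preceq$ with the Brady--Watt order. Using the standard fact that $\ell_T(g)=\dim\Mov(g)$ for every $g\in W$ (Carter \cite{Carter}), the relation $x\preceq z$, i.e. $\ell_T(z)=\ell_T(x)+\ell_T(x^{-1}z)$, becomes $\dim\Mov(z)=\dim\Mov(x)+\dim\Mov(x^{-1}z)$, which is exactly the defining relation of the Brady--Watt order $\preceq$ on $O(V)$ restricted to $W$. Hence $[1,w]_W\subseteq[1,w]_{O(V)}$ and the order on $[1,w]_W$ is induced from the one on $O(V)$; this lets me argue entirely inside the single interval $[1,w]_{O(V)}$ and only read off the conclusion for $W$ at the end.

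The forward implication is then a direct computation: if $x\preceq z$ then $\Mov(z)=\Mov(x)\oplus\Mov(x^{-1}z)$, so in particular $\Mov(x)\subseteq\Mov(z)$. (The inclusion $\Mov(xy)\subseteq\Mov(x)+\Mov(y)$ always holds because $xy-\mathrm{id}=(x-\mathrm{id})y+(y-\mathrm{id})$, and the length additivity forces the sum to be direct and to fill $\Mov(z)$.) For injectivity and the backward implication I invoke Theorem~1 of \cite{BW02}: for a fixed $a$ and each subspace $U\subseteq\Mov(a)$ there is a \emph{unique} $b\preceq a$ with $\Mov(b)=U$. Injectivity on $[1,w]$ is the case $a=w$: if $\Mov(x)=\Mov(x')$ with $x,x'\preceq w$ then uniqueness gives $x=x'$. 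For the backward implication, suppose $x,z\in[1,w]$ with $\Mov(x)\subseteq\Mov(z)$; applying the theorem with $a=z$ and $U=\Mov(x)$ produces a unique $b\preceq z$ with $\Mov(b)=\Mov(x)$, and then $b\preceq z\preceq w$ gives $b\preceq w$ by transitivity, so uniqueness for $a=w$ forces $b=x$ and therefore $x\preceq z$.

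The step I expect to be most delicate is precisely this last descent: Brady--Watt produces its divisor $b$ a priori only in $O(V)$, so the argument must be arranged so that $b$ is never required to lie in $W$ on its own --- it is pinned down as an element of $[1,w]$ only after being matched with $x$ through the uniqueness clause for $a=w$. Keeping every application of Theorem~1 anchored at the single top element $w$ (and using transitivity of $\preceq$ to relocate the intermediate divisor below $w$) is what makes the two directions fit together without ever leaving $W$ unjustified; once that is in place, combining the resulting bijection with the order-reversing involution $U\mapsto U^{\perp}$ yields the claimed anti-isomorphism.
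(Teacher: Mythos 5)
Your proof is correct and takes essentially the same route as the paper's: both use the duality $\Fix(x)=\Mov(x)^{\perp}$ together with Theorem~1 of \cite{BW02}, establishing injectivity by the uniqueness clause anchored at $a=w$, and the backward implication by producing the Brady--Watt divisor below $z$ and identifying it with $x$ via transitivity of $\preceq$ and uniqueness at $w$. If anything, your write-up is more explicit than the paper's at the one delicate point --- that the intermediate divisor is a priori only an element of $O(V)$ rather than of $W$ --- which the paper's proof glosses over.
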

\begin{proof}
	If $\fix(x) = \fix(y)$ for two elements $x,y$ in $[1,w]$, then $\Mov(x) = \fix(x)^\perp = \fix(y)^\perp = \Mov(y)$. Therefore, we obtain with \cite[Theorem~1]{BW02} that $ \fix$ is injective.
	By the definition  of  $ {\cal F}(w)$, it is surjective as well.
	
	Next we show that $\fix$ is an anti-isomorphism between the two posets.
	If $x,y \in [1,w]$ such that $x \preceq y$, then $\fix(y) \subseteq \fix(x)$.
	So, assume that  $\fix(y) \subseteq \fix(x)$ for some $x,y \in [1,w]$.  Then $\Mov(x) \subseteq \Mov(y)$.
	According to  \cite[Theorem~1]{BW02}, there is a unique $z \in O(V)$ with $z \preceq y$  such that
	$\Mov(z) = \Mov(x)$. We conclude from the transitivity of the relation $\preceq$ that $x,y $ are in $[1,w]$.
	Then it is a consequence of \cite[Theorem~1]{BW02} that $x = z$, and $x \preceq y$.
	Thus, $\fix$ is an anti-isomorphism between the two posets.
\end{proof}

From Lemma~\ref{PropParaSbgrp}(e), we immediately derive the following.

\begin{lemma}\label{Fixx}
 We have $\fix(P_x) = \fix(x)$ for all  $x \in [1,w]$.
\end{lemma}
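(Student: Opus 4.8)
The plan is to deduce the equality directly from Lemma~\ref{PropParaSbgrp}(e), which supplies all the substantive content; everything else is an unwinding of the definition of $\fix$ of a subset of $W$. Set $H := \{g \in W \mid g(v) = v \text{ for all } v \in \fix(x)\}$, the pointwise stabiliser in $W$ of the subspace $\fix(x)$. This $H$ is visibly a subgroup, and Lemma~\ref{PropParaSbgrp}(e) identifies $P_x$ with $\langle H \rangle = H$. The remaining task is then to check the two inclusions $\fix(x) \subseteq \fix(P_x)$ and $\fix(P_x) \subseteq \fix(x)$.

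I would establish these separately. For $\fix(x) \subseteq \fix(P_x)$: by the description above, every element of $P_x = H$ fixes $\fix(x)$ pointwise, so each $v \in \fix(x)$ is fixed by all of $P_x$, that is, $v \in \fix(P_x)$. For the reverse inclusion $\fix(P_x) \subseteq \fix(x)$: fix a reduced $T$-decomposition $x = t_1 \cdots t_k$, so that $P_x = \langle t_1, \ldots, t_k \rangle$ by Lemma~\ref{PropParaSbgrp}(a); then $x \in P_x$, and hence any vector fixed by every element of $P_x$ is in particular fixed by $x$. Combining the two inclusions yields $\fix(P_x) = \fix(x)$.

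There is essentially no obstacle here: once Lemma~\ref{PropParaSbgrp}(e) is in hand, the statement is immediate, since the genuine work---the group-theoretic identification of $P_x$ with the pointwise stabiliser of $\fix(x)$, drawn from \cite[Section~4]{BaumGobet} and \cite[Chapter~V, 1.6]{Bourbaki}---has already been packaged into that clause. The only point deserving a moment's care is the observation that $x$ itself lies in $P_x$, as this is what secures the nontrivial inclusion $\fix(P_x) \subseteq \fix(x)$; a priori $\fix(P_x)$ could only be a subspace of $\fix(x)$, and noting $x \in P_x$ rules out proper containment.
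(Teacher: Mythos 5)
Your proof is correct and follows exactly the paper's route: the paper derives the lemma immediately from Lemma~\ref{PropParaSbgrp}(e), and your argument is just the explicit unwinding of that derivation (the inclusion $\fix(x)\subseteq\fix(P_x)$ from the pointwise-stabiliser description of $P_x$, and the reverse inclusion from $x\in P_x$). No gaps; the paper simply leaves these two inclusions to the reader.
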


Next we consider the posets $[1,w]$ and ${\cal P}(w)$.

\begin{proposition}\label{IsoIntervalParabolic}
	The map $p:  [1,w]  \rightarrow    {\cal P}(w), x \mapsto P_x$
	is an isomorphism between the posets $([1,w], \preceq)$ and $ ({\cal P}(w), \leq)$.
\end{proposition}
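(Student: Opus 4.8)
The plan is to show that $p$ is a poset isomorphism by establishing separately that it is a bijection and that it both preserves and reflects the order. Surjectivity is immediate, since $\mathcal{P}(w)$ is defined as the image $\{P_x \mid x \in [1,w]\}$. For the remaining properties I intend to exploit the previously established anti-isomorphism $\fix\colon [1,w] \to \mathcal{F}(w)$ of Proposition~\ref{AntiIsoIntervallFix} together with the identity $\fix(P_x) = \fix(x)$ of Lemma~\ref{Fixx}, so that everything is transported between the three posets through their fixed-space data.

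For injectivity, suppose $P_x = P_y$ for $x, y \in [1,w]$. Then Lemma~\ref{Fixx} gives $\fix(x) = \fix(P_x) = \fix(P_y) = \fix(y)$, and since $\fix$ is injective by Proposition~\ref{AntiIsoIntervallFix}, we conclude $x = y$. The order-preserving direction is the easy half: if $x \preceq y$, then using $P_x = \langle z \in W \mid z \preceq x\rangle$ from Lemma~\ref{PropParaSbgrp}(c) together with transitivity of $\preceq$ (every $z \preceq x$ satisfies $z \preceq y$), we obtain $P_x \leq P_y$ directly.

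The order-reflecting direction is where the real content lies, and it is the step I expect to be the main obstacle. Here I would argue that if $P_x \leq P_y$, then every vector fixed pointwise by all of $P_y$ is in particular fixed by all of $P_x$, whence $\fix(P_y) \subseteq \fix(P_x)$; translating via Lemma~\ref{Fixx} this reads $\fix(y) \subseteq \fix(x)$, and applying the anti-isomorphism of Proposition~\ref{AntiIsoIntervallFix} in reverse yields $x \preceq y$. The crucial input making this possible is Lemma~\ref{PropParaSbgrp}(e), which characterises $P_x$ as the pointwise stabiliser in $W$ of $\fix(x)$: this is exactly what guarantees that $P_x$ is recoverable from $\fix(x)$, so that no information is lost when passing from parabolic subgroups to their fixed spaces.

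Conceptually, the argument amounts to observing that $\fix$ factors as $p\colon [1,w] \to \mathcal{P}(w)$ followed by the map $\mathcal{P}(w) \to \mathcal{F}(w),\ P_x \mapsto \fix(P_x)$; since $\fix$ is an anti-isomorphism, it suffices to check that this second map is itself a (bijective, inclusion-reversing) anti-isomorphism, for then $p$ is the composite of two anti-isomorphisms and hence an isomorphism. Verifying that the second map is genuinely an anti-isomorphism of posets, rather than merely an order-reversing surjection, is precisely the point where Lemma~\ref{PropParaSbgrp}(e) is indispensable, and this is the only genuine subtlety in the proof.
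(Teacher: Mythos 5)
Your proof is correct and follows essentially the same route as the paper's: surjectivity from the definition of ${\cal P}(w)$, injectivity and the order-reflecting direction both transported through $\fix$ via Lemma~\ref{Fixx} and the anti-isomorphism of Proposition~\ref{AntiIsoIntervallFix}, and the order-preserving direction read off from the description of $P_x$ as generated by divisors of $x$. The concluding remark about factoring $\fix$ through $p$ is accurate extra commentary, not a deviation in method.
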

\begin{proof}
By Lemma~\ref{PropParaSbgrp}, the map $p$ is well-defined.
We first show injectivity. Let $x, y \in [1,w]$ such that $P_x = P_y$. Then $\fix(P_x) = \fix(P_y)$, and Lemma~\ref{Fixx} and Proposition~\ref{AntiIsoIntervallFix} yield $x = y$. 
By definition of $ {\cal P}(w)$, the map $p$ is surjective as well.

It remains to show that $p$ is an isomorphism of posets.
Let $x,y \in [1,w]$ such that $x \preceq y$. Then, by definition of 
$p$, we get $P_x \leq P_y$.
Let $P_x \leq P_y$ for some $x,y \in [x,y]$. Then $\fix(P_y) \leq \fix(P_x)$. We conclude $x \preceq y$ by applying Lemma~\ref{Fixx} and  Proposition~\ref{AntiIsoIntervallFix}.
\end{proof}

Combining Propositions \ref{AntiIsoIntervallFix} and \ref{IsoIntervalParabolic}, we immediately get the following.

\begin{cor}\label{Non-isoParabolicFix}
	The posets $ ({\cal P}(w), \leq)$ and  $ ({\cal F}(w), \subseteq)$ are anti-isomorphic.
\end{cor}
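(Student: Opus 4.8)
The statement is flagged as immediate, and indeed the plan is purely formal: I would simply compose the two structure-preserving maps already established. First I would recall from Proposition~\ref{IsoIntervalParabolic} that $p\colon ([1,w],\preceq) \to ({\cal P}(w),\leq)$, $x \mapsto P_x$, is an isomorphism of posets; since the inverse of an order isomorphism is again an order isomorphism, $p^{-1}\colon ({\cal P}(w),\leq) \to ([1,w],\preceq)$ is an isomorphism as well. Then I would invoke the map $\fix\colon ([1,w],\preceq) \to ({\cal F}(w),\subseteq)$, $x \mapsto \Fix(x)$, which by Proposition~\ref{AntiIsoIntervallFix} is an anti-isomorphism.

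Next I would form the composite $\Phi := \fix \circ p^{-1}\colon ({\cal P}(w),\leq) \to ({\cal F}(w),\subseteq)$. Being a composite of two bijections, $\Phi$ is a bijection. For the order behaviour, given $P_x \leq P_y$, applying the order isomorphism $p^{-1}$ yields $x \preceq y$ (order preserved), and then applying the anti-isomorphism $\fix$ yields $\Fix(y) \subseteq \Fix(x)$ (order reversed); conversely, $\Fix(y) \subseteq \Fix(x)$ forces $x \preceq y$ and hence $P_x \leq P_y$. Thus $\Phi$ reverses order in both directions, i.e.\ it is an anti-isomorphism, the underlying principle being that an order isomorphism followed by an order anti-isomorphism is again an order anti-isomorphism.

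Finally, to make the correspondence canonical and transparent, I would note via Lemma~\ref{Fixx} that $\Phi(P_x) = \fix(p^{-1}(P_x)) = \Fix(x) = \fix(P_x)$, so the resulting anti-isomorphism is nothing but the natural assignment $P_x \mapsto \fix(P_x)$ sending a parabolic subgroup in ${\cal P}(w)$ to its fixed subspace. There is essentially no obstacle here; the only point requiring a word of care is to confirm that the two previously established maps share the common domain $[1,w]$, so that routing the composition through that domain (rather than positing a direct map) is exactly what legitimises it -- a triviality once Propositions~\ref{AntiIsoIntervallFix} and~\ref{IsoIntervalParabolic} are in hand.
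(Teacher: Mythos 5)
Your proposal is correct and is exactly the argument the paper intends: the corollary is stated as an immediate consequence of combining Propositions~\ref{AntiIsoIntervallFix} and~\ref{IsoIntervalParabolic}, which is precisely your composition $\fix \circ p^{-1}$. Spelling out that this composite is a bijection reversing order in both directions, and identifying it via Lemma~\ref{Fixx} as $P_x \mapsto \fix(P_x)$, faithfully fills in the one-line proof.
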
	

\subsubsection*{The lattice property}
	
\begin{theorem}\label{ThmLattice}

Let $w$ be a quasi-Coxeter element in a finite Coxeter group $W$.

In the simply laced types, the poset $([1,w],\preceq)$ is a lattice if and only if $w$ is a Coxeter element.

In type $H_3$, the poset $([1,w],\preceq)$ is always a lattice.

In type $H_4$, $([1,w],\preceq)$ is a lattice if and only if $w$ is a Coxeter element or a proper quasi-Coxeter element of order $30$, that is the Coxeter number in type $H_4$.

In type $F_4$, $([1,w],\preceq)$ is a lattice if and only if $w$ is a Coxeter element.

\end{theorem}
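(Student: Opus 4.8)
The plan is to transport the whole question to the fixed-space side. By Proposition~\ref{AntiIsoIntervallFix} the map $\fix$ is an anti-isomorphism $([1,w],\preceq)\to(\mathcal{F}(w),\subseteq)$, and since an anti-isomorphism interchanges meets and joins, $[1,w]$ is a lattice if and only if $\mathcal{F}(w)$ is (and likewise for $\mathcal{P}(w)$ via Proposition~\ref{IsoIntervalParabolic}). I would then lean throughout on one elementary principle: because $\preceq$ is transitive, every prefix $w'\preceq w$ yields a lower order-interval $[1,w']=\{x:x\preceq w'\}$ of the poset $[1,w]$, and order-intervals of a lattice are themselves lattices. Hence to show $[1,w]$ is \emph{not} a lattice it suffices to exhibit a single prefix $w'\preceq w$ with $[1,w']$ not a lattice, while conversely the lattice property descends to all such lower intervals. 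The ``if'' direction (Coxeter element, every type) is then classical: for a Coxeter element $c$ the interval $[1,c]$ is the noncrossing-partition lattice underlying the dual braid monoid, whose lattice property is part of the Garside structure of \cite{BessisDualMonoid} (see also \cite{BW02}); this settles all types, crystallographic or not, at once. What remains is the analysis of proper quasi-Coxeter elements.

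For the ``only if'' direction in the simply laced types I would reduce to $D_4$. By the description of the Carter diagrams in Section~\ref{SubCarterDiagrams}, the diagram of every proper quasi-Coxeter element contains a chordless $4$-cycle on four reflections $s_1,s_2,s_3,s_4$; these generate a parabolic subgroup of type $D_4$, and, rearranging the reduced $T$-decomposition of $w$ within the two commuting halves of its bipartite decomposition (using Theorem~\ref{LemmaTransitivityQCox}), one obtains a proper quasi-Coxeter element $w'$ of this $D_4$ with $w'\preceq w$. Since $[1,w']$ for the proper quasi-Coxeter element of $D_4$ is not a lattice --- this is the base case treated in Part~I \cite{BaumNeaRees}, and can be seen concretely by exhibiting two atoms of $[1,w']$ possessing two incomparable minimal common upper bounds (so that their join does not exist; on the fixed-space side this reflects the failure of $\mathcal{F}(w')$ to be closed under intersection, cf.\ Corollary~\ref{IntersectionSubspace}) --- the interval principle forces $[1,w]$ to be non-lattice as well. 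The same reduction covers $E_6,E_7,E_8$ uniformly, and $D_n$ itself is Part~I. In type $F_4$ there is the single proper class $F_4(a_1)$, whose diagram is again a (doubly bonded) $4$-cycle of rank $4$, and I would verify directly that its interval carries the same two-upper-bounds configuration and so is not a lattice.

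The genuinely case-dependent, non-crystallographic statements I would settle by computation. The quasi-Coxeter conjugacy classes in $H_3$ and $H_4$ are finite in number and are already located in GAP \cite{GAP4}; for each representative $w$ one builds $[1,w]$ explicitly (its elements are the $\preceq$-prefixes, produced from reflection length and the Hurwitz action of Section~\ref{SubDualApproach}), records the covering relations, and tests the lattice axiom pair-by-pair. This yields that in $H_3$ every $[1,w]$ is a lattice, while in $H_4$ exactly the Coxeter element and the proper quasi-Coxeter element of order $30$ (the Coxeter number) give lattices, every other proper class failing. The same machinery reconfirms the $D_4$ and $F_4(a_1)$ failures invoked above.

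The main obstacle is conceptual and lies in the ``only if'' direction: to make the $D_4$-reduction rigorous one must know that the four reflections of a chordless $4$-cycle genuinely form a $\preceq$-\emph{prefix} of $w$, not merely a set of reflections lying below $w$, which is exactly where the bipartite Carter structure together with Hurwitz transitivity is needed, and the base-case non-lattice configuration for $D_4$ must be pinned down explicitly. On the computational side the difficulty is size rather than subtlety --- the $H_4$ intervals are large (of the order of the generalised Catalan number for $H_4$) --- compounded by the somewhat surprising need to single out the order-$30$ element of $H_4$ as the lone non-Coxeter exception, for which I expect no type-free explanation and which the computation simply reveals.
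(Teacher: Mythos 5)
Your overall architecture coincides with the paper's: the Coxeter-element case is quoted from \cite{BessisDualMonoid} (the paper also cites Brady--Watt), the simply laced proper cases are reduced to the non-lattice property of the $D_4$ interval established in Part I \cite{BaumNeaRees}, and types $H_3$, $H_4$, $F_4$ are settled by machine computation (the paper does $F_4$ by GAP rather than by a hand-checked bowtie, a cosmetic difference). Your transfer mechanism is also sound as far as it goes: the order-ideal principle --- $[1,w']$ is a principal order ideal of $[1,w]$ whenever $w'\preceq w$, and a principal order ideal of a lattice is a lattice --- is correct, and your prefix claim is true, since after reordering within the two commuting halves of the bipartite decomposition the product $w'$ of the four cycle reflections becomes a consecutive middle factor $w=u\,w'\,v$ with additive lengths, and middle factors are left divisors (every element is balanced: $w=xy$ reduced implies $w=(xyx^{-1})x$ reduced).

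There is, however, a genuine gap at exactly the point where the paper instead invokes Theorem~2.1 of Dyer \cite{Dyer}. What Part I proves is that the interval of the proper quasi-Coxeter element of $D_4$ fails to be a lattice when computed \emph{inside} $W(D_4)$, i.e.\ with respect to the reflection length of $W(D_4)$. What your order-ideal argument needs is that the ideal $[1,w']$ of $[1,w]$ --- computed with respect to $\ell_\Refl$ in the ambient group $W$ --- fails to be a lattice. These are the same poset only after two compatibility facts are proved: (i) every $W$-divisor of $w'$ lies in the rank-four reflection subgroup $W''$ generated by the cycle reflections, which does follow from the parabolic-closure statements (Lemma~\ref{PrefixQuasiCox} and Lemma~\ref{PropParaSbgrp}(c),(d)); and (ii) the reflection length of $W$ restricts on $W''$ to the internal reflection length of $W''$, so that the ambient order $\preceq$ and the internal $D_4$ order agree on $W''$. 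Point (ii) is not automatic --- a priori an element of $W''$ could be written as a product of fewer reflections of $W$ lying outside $W''$ --- and it is precisely the content of Dyer's theorem (for finite groups one may alternatively use Carter's formula $\ell_\Refl(x)=\dim\Mov(x)$, which is compatible with passing to parabolic subgroups). Your proposal flags the prefix question as the main obstacle but silently conflates these two posets, so the citation of the Part I base case does not yet apply to the ideal $[1,w']$, and the $E_6$, $E_7$, $E_8$ (and $D_n$) reduction is incomplete without it. Once (i) and (ii) are supplied, your argument becomes a faithful, somewhat more self-contained reconstruction of the paper's proof.
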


\begin{proof}

When $w$ is a Coxeter element, the fact that $([1,w],\preceq)$ is a lattice was shown in \cite{BessisDualMonoid} and \cite{BradyWatt}. Then, consider $w$ to be a proper quasi-Coxeter element.

For type $D_n$, we have shown in Proposition~6.6 in \cite{BaumNeaRees} that the poset $([1,w],\preceq)$ is not a lattice, by showing the result in type $D_4$ and then applying Theorem~2.1 of Dyer \cite{Dyer}. 

Consider types $E_6$, $E_7$, and $E_8$. Since the Carter diagram related to each conjugacy class of proper quasi-Coxeter elements contains a $4$-cycle (that is, a type $D_4$ cycle), as illustrated in the figures of Section~\ref{SubCarterDiagrams}, the same Theorem~2.1 of Dyer applies. Hence we also deduce that the posets are not lattices. 

Using GAP \cite{GAP4}, we show the statement of the theorem for types $H_3$, $H_4$, and $F_4$.
\end{proof}

\begin{cor}\label{IntersectionSubspace}
The posets  $ ({\cal P}(w), \leq)$ and $ ({\cal F}(w), \subseteq)$
are not lattices. In particular, there are two subspaces $F_1, F_2$ in ${\cal F}(w)$, whose intersection is not in ${\cal F}(w)$.
\end{cor}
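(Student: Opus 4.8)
The plan is to derive everything from the lattice statement for $([1,w],\preceq)$ in Theorem~\ref{ThmLattice}, together with the (anti-)isomorphisms of Propositions~\ref{AntiIsoIntervallFix} and~\ref{IsoIntervalParabolic}. The guiding principle is that the property of being a lattice is preserved by poset isomorphisms and is exchanged with itself by anti-isomorphisms: an anti-isomorphism $P \to Q$ is the same thing as an isomorphism $P \to Q^{\mathrm{op}}$, and a poset is a lattice if and only if its opposite is, since passing to the opposite simply interchanges meets and joins. So whenever $([1,w],\preceq)$ fails to be a lattice — which by Theorem~\ref{ThmLattice} is exactly the situation we are in for a proper quasi-Coxeter element $w$ of the relevant type — neither $({\cal P}(w),\leq)$ nor $({\cal F}(w),\subseteq)$ can be a lattice.

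Concretely, I would first invoke Proposition~\ref{IsoIntervalParabolic}: since $p$ is an isomorphism of posets and $([1,w],\preceq)$ is not a lattice, $({\cal P}(w),\leq)$ is not a lattice. Then I would invoke Proposition~\ref{AntiIsoIntervallFix}: since $\fix$ is an anti-isomorphism, $({\cal F}(w),\subseteq)$ is a lattice if and only if $([1,w],\preceq)$ is, and hence $({\cal F}(w),\subseteq)$ is not a lattice either. Alternatively, Corollary~\ref{Non-isoParabolicFix} could be used to pass between the two posets once one of them is settled.

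For the ``in particular'' clause I would argue by contraposition, using the observation that the only way $({\cal F}(w),\subseteq)$ could be a lattice built on intersections is if it were a closure system. Note first that ${\cal F}(w)$ is finite (as $W$, and hence $[1,w]$, is finite) and contains the ambient space $V = \Fix(1)$ as its maximum element, since $1 \in [1,w]$. Now suppose, for contradiction, that $F_1 \cap F_2 \in {\cal F}(w)$ for every pair $F_1, F_2 \in {\cal F}(w)$. Then ${\cal F}(w)$ is closed under all finite intersections and contains $V$, that is, it is a Moore family, and any such family is a lattice under $\subseteq$: the meet of $F_1, F_2$ is $F_1 \cap F_2$, while their join is the intersection of all elements of ${\cal F}(w)$ containing both $F_1$ and $F_2$ (a non-empty finite collection, since $V$ is such an element), which again lies in ${\cal F}(w)$ by the assumed intersection-closure. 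This contradicts the previous paragraph, so there must exist $F_1, F_2 \in {\cal F}(w)$ with $F_1 \cap F_2 \notin {\cal F}(w)$.

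I expect the only genuinely delicate point to be this last step: one must be certain that failure of the lattice property really forces an intersection to leave ${\cal F}(w)$, rather than being caused by some subtler defect in the joins alone. This is precisely what the Moore-family observation rules out — once intersections stay inside a finite family possessing a top element, both meets and joins are automatically present — so the potential obstacle dissolves into the elementary fact that a closure system is a complete lattice. Everything else is a purely formal transport of the non-lattice property across the maps of Propositions~\ref{AntiIsoIntervallFix} and~\ref{IsoIntervalParabolic}.
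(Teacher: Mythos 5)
Your proof is correct, and its first half (transporting the failure of the lattice property from $([1,w],\preceq)$ across the isomorphism of Proposition~\ref{IsoIntervalParabolic} and the anti-isomorphism of Proposition~\ref{AntiIsoIntervallFix}, using that a poset is a lattice if and only if its opposite is) is exactly what the paper does. For the ``in particular'' clause, however, you take a genuinely different route. The paper cites \cite[Proposition~1.5]{BM10}: since $({\cal F}(w),\subseteq)$ is a finite bounded graded poset that is not a lattice, it contains a \emph{bowtie}, i.e.\ incomparable $F_1,F_2$ lying above incomparable $G_1,G_2$ with nothing strictly in between; the intersection $F_1\cap F_2$ would be such an intermediate element, so it cannot lie in ${\cal F}(w)$. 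You instead argue by contraposition with the Moore-family observation: ${\cal F}(w)$ is finite and contains the top element $V=\Fix(1)$, so if it were closed under pairwise intersections it would automatically be a lattice (meets are intersections, joins are intersections of the finitely many common upper bounds), contradicting the first assertion. Your argument is more self-contained -- it needs no external citation and no gradedness hypothesis, only finiteness and the existence of a maximum -- whereas the paper's bowtie argument, at the cost of invoking \cite{BM10}, pinpoints the failure concretely: it exhibits a specific incomparable pair $F_1,F_2$ together with the witnesses $G_1,G_2$ below them that obstruct membership of $F_1\cap F_2$ in ${\cal F}(w)$. Both proofs are valid, and both correctly rely on the implicit standing hypothesis (made explicit in your write-up via Theorem~\ref{ThmLattice}) that $w$ is a proper quasi-Coxeter element of a type for which $[1,w]$ is not a lattice.
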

\begin{proof} The first assertions are consequences of the 
	isomorphism and anti-isomorphism results of the given posets. As ${\cal F}(w)$ is not a lattice, there is a bowtie in 
	that poset (see \cite[Proposition~1.5]{BM10}), which yields the second claim.
	\end{proof}

\subsection{Interval groups for quasi-Coxeter elements}\label{SubsectionIntervalGrpsOfQCoxElts}

Let $w$ be a quasi-Coxeter element of $W$. Consider the interval $[1,w] = \{w \in W\ |\ w \preceq w\}$. The interval group related to the interval $[1,w]$ is defined as follows.

\begin{definition}\label{DefIntervalGrpsCoxeter}
	
	We define the group $G([1,w])$ by a presentation with set of generators $\bs{[1,w]}$ in bijection with the interval $[1,w]$, and relations corresponding to the relations in $[1,w]$, meaning that $\bs{uv} = \bs{r}$ if $u,v,r \in [1,w]$, $uv=r$, and $u \preceq r$ i.e. $\ell_{\Refl}(r) = \ell_{\Refl}(u) + \ell_{\Refl}(v)$.
	
\end{definition}

By transitivity of the Hurwitz action on the set of reduced decompositions of $w$ (see Theorem \ref{LemmaTransitivityQCox}), we have the following result.

\begin{proposition}\label{PropDualPresDn}
	Let $w \in W$ be a quasi-Coxeter element, and let $\RRefl \subset \bs{[1,w]}$ be the copy of the set of reflections $\Refl$ in $W$. Then $$G([1,w]) = \langle  \RRefl ~|~  \rrefl \rrefl' = \rrefl' \rrefl'' ~\mbox{for}~\rrefl , \rrefl',  \rrefl''  \in   \RRefl~\mbox{if}~\refl \neq \refl' , \refl'' \in \Refl~\mbox{and}~\refl \refl' = \refl' \refl'' \preceq w \rangle$$
	is a presentation of the interval group with respect to $w$.
\end{proposition}

Notice that the relations presented in Proposition~\ref{PropDualPresDn} are the relations that are visible on the elements of length $2$ in the poset $([1,w],\preceq)$. We call them the dual braid relations (as in \cite{BessisDualMonoid}).

The following result is due to Michel as stated by Bessis in \cite{BessisDualMonoid} (Theorem~0.5.2) and explained on page 318 of Chapter VI in \cite{DehornoyEtAl} (see also \cite{BessisDigneMichel}). It is the main theorem in interval Garside theory.

\begin{theorem}\label{TheoremMainThmIntGarTheory}
	If for $v \in W$, the two intervals $[1,v]$ and $[1,v]_{r}$ are equal (we say that $v$ is balanced) and if the posets $([1,v],\preceq)$ and $([1,v]_{r},\preceq_{r})$ are lattices, then the interval group $G([1,v])$ is an interval Garside group.
\end{theorem}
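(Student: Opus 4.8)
The plan is to realise $G([1,v])$ as the group of fractions of an associated \emph{interval monoid} and to prove that this monoid is a Garside monoid with Garside element the copy $\bs v$ of $v$; being the group of fractions of a Garside monoid is exactly what it means for $G([1,v])$ to be an interval Garside group. Concretely, I would first introduce the interval monoid $M([1,v])$, defined by the same data as in Definition~\ref{DefIntervalGrpsCoxeter} but read in the category of monoids rather than groups: generators $\bs{[1,v]}$ in bijection with $[1,v]$, and relations $\bs u\,\bs{u'}=\bs r$ whenever $u,u',r\in[1,v]$ satisfy $uu'=r$ and $u\preceq r$. Since $G([1,v])$ and $M([1,v])$ are presented by the same relations, $G([1,v])$ is the universal group of $M([1,v])$; once $M([1,v])$ is shown to be cancellative with common multiples (Ore's conditions), this universal group coincides with the group of fractions and $M([1,v])$ embeds in it. Establishing the Garside axioms for $M([1,v])$ is therefore the entire task.

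The clean way to organise the verification is through the germ formalism of Chapter~VI of \cite{DehornoyEtAl}. I would observe that $[1,v]$, equipped with the partial product $u\ast u' = uu'$ defined precisely when $uu'\in[1,v]$ and $\ell_{\Refl}(uu')=\ell_{\Refl}(u)+\ell_{\Refl}(u')$, is a germ, and that $M([1,v])$ is the monoid it generates. Grading each generator $\bs u$ by $\ell_{\Refl}(u)$ makes every defining relation length-preserving, so $M([1,v])$ is graded by $\ell_{\Refl}$; this yields atomicity and the Noetherian (well-foundedness) condition for free, the atoms being the copies $\RRefl$ of the reflections. The two hypotheses of the theorem then feed directly into the remaining requirements: balancedness ($[1,v]=[1,v]_{r}$) guarantees that $\bs v$ is a two-sided Garside element, its left and right divisors in $M([1,v])$ both being all of $\bs{[1,v]}$ and generating the monoid, while the two lattice hypotheses supply the left and right lcm/gcd operations on the simples.

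The heart of the matter, and the step I expect to be the main obstacle, is to show that the lattice structure on $[1,v]$ lifts faithfully and coherently to $M([1,v])$ --- equivalently, that $M([1,v])$ is left- and right-cancellative and that the simples $\bs{[1,v]}$ embed with their divisibility relation matching $\preceq$. In the germ language this is the verification of the associativity (or ``cube'') condition, which is the content-bearing computation: one must check that the locally defined meets and joins coming from the lattice structure of $[1,v]$ are compatible across triples, so that no unexpected coincidences are forced in the generated monoid. This is exactly where the lattice hypothesis is indispensable: existence of unique meets and joins is what rules out the ``bowtie'' obstructions (compare Corollary~\ref{IntersectionSubspace}) and makes the common refinement of any two factorisations of a single element well-defined, from which cancellativity follows.

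Once cancellativity and the faithful embedding of the simples are in hand, the remaining Garside-monoid conditions are formal bookkeeping: $\bs v$ has only finitely many divisors (the finite set $\bs{[1,v]}$), these generate $M([1,v])$, and left and right divisibility are lattices on the whole monoid because the germ condition propagates the lattice property of the generating set $[1,v]$ to $M([1,v])$. Ore's conditions then hold, so $G([1,v])$ is the group of fractions of the Garside monoid $M([1,v])$, and hence $G([1,v])$ is an interval Garside group. I would isolate the germ/cube condition as the single substantive lemma and treat everything downstream from it as routine.
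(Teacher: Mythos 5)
The paper does not actually prove this theorem itself --- it attributes it to Michel as stated by Bessis and points to the germ-theoretic treatment in Chapter VI of \cite{DehornoyEtAl} --- and your proposal is a faithful reconstruction of exactly that cited argument: realise $G([1,v])$ as the enveloping group of the interval monoid $M([1,v])$ viewed as the monoid generated by the germ $[1,v]$, get Noetherianity from the $\ell_{\Refl}$-grading, use balancedness to make $\bs{v}$ a two-sided Garside element, and use the two lattice hypotheses to verify the Garside-germ (cancellativity/embedding-of-simples) condition before concluding via Ore's theorem. Your outline is correct and correctly isolates cancellativity as the one substantive step, so it takes essentially the same route as the source the paper relies on.
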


Since $\Refl$ is stable under conjugation, quasi-Coxeter elements are always balanced. The only obstruction to obtain interval Garside groups is the lattice property. The following is a consequence of Theorem~\ref{ThmLattice}.

\begin{theorem}\label{ThmGarsideArtin}
	Let $c$ be a Coxeter element. Then the  posets $([1,c],\preceq)$ and $([1,c],\preceq_r)$ are lattices; hence the interval group $G([1,c])$ is a Garside group. The group $G([1,c])$ is isomorphic to the Artin group associated with $W$. 
\end{theorem}

Note that Garside groups are desirable since they enjoy important group-theoretical, homological, and homotopical properties. See \cite{DehornoyEtAl} for a treatment on the foundations of Garside theory.

\subsection{Presentations for the interval groups for the quasi-Coxeter elements}\label{PraesentIntervalGrp}

Let $G$ be a group containing elements $\ss_1$, $\ss_2$, $\ss_3$, and $\ss_4$
that satisfy the relations of the Artin group corresponding to
the $4$-cycle that is illustrated in the figure below.

\begin{center}
	\begin{tikzpicture}
		
		\node[draw,shape=circle, label=below:$\ss_1$] (00) at (0,0) {};
		\node[draw,shape=circle, label=right:$\ss_4$] (11) at (0.5,0.5) {};
		\node[draw,shape=circle,label=left:$\ss_2$] (m11) at (-0.5,0.5) {};
		\node[draw,shape=circle,label=above:$\ss_3$] (02) at (0,1) {};
		
		\draw[-] (00) to (m11);
		\draw[-] (11) to (02);
		\draw[-] (m11) to (02);
		\draw[-] (11) to (00);
		
	\end{tikzpicture}
\end{center}

We associate two words with this $4$-cycle, which we call the {\em cycle commutator} and the {\em twisted cycle commutator}, and which we define by
 $$\cc{\ss_1}{\ss_2}{\ss_3}{\ss_4} := [\ss_1,\ss_2\ss_3\ss_4\ss_3^{-1}\ss_2^{-1}],\ \mbox{and}\   
 \tc{\ss_1}{\ss_2}{\ss_3}{\ss_4} := [\ss_1,\ss_2^{-1}\ss_3\ss_4\ss_3^{-1}\ss_2].$$

It is straightforward to check that the four cycle commutators
$$\cc{\ss_1}{\ss_2}{\ss_3}{\ss_4},\, 
\cc{\ss_2}{\ss_3}{\ss_4}{\ss_1},\, 
\cc{\ss_3}{\ss_4}{\ss_1}{\ss_2},\, 
\cc{\ss_4}{\ss_1}{\ss_2}{\ss_3}$$
are equivalent, in the sense that if one of them is a relator of $G$
(i.e. it evaluates to the identity in $G$), then so do the other three,
and the same is true of the corresponding twisted cycle commutators.
It follows from the braid relations between $\ss_2,\ss_3,\ss_4$ that
$\ss_2^{-1}\ss_3\ss_4\ss_3^{-1}\ss_2 =_G \ss_4^{-1}\ss_3\ss_2\ss_3^{-1}\ss_4$.
Hence $\tc{\ss_1}{\ss_2}{\ss_3}{\ss_4}$ and
$\tc{\ss_1}{\ss_4}{\ss_3}{\ss_2}$ are equivalent.
But we cannot deduce the same relationship between
$\cc{\ss_1}{\ss_2}{\ss_3}{\ss_4}$ and
$\cc{\ss_1}{\ss_4}{\ss_3}{\ss_2}$, and so the word 
$\cc{\ss_1}{\ss_2}{\ss_3}{\ss_4}$ must be associated with an oriented 4-cycle of the form $\ss_1 \rightarrow \ss_2 \rightarrow \ss_3 \rightarrow \ss_4 \rightarrow \ss_1$.

Notice also that both the cycle and twisted cycle commutator relators can be written as relations between positive words (see for instance Lemma~5.3 in \cite{BaumNeaRees}).

In the remainder of the paper, we will use the following  abbreviation.
Given a Carter diagram $\Delta$  and $\SS$ a set of generators related 
to the vertices of $\Delta$ we denote by $R(\Delta)$ the set of braid relations
defined by $\Delta$; that is, for each pair of generators $\ss,\tt$, we have the relation
$\ss \tt \ss = \tt \ss \tt$ if the vertices representing $\ss$ and $\tt$ 
are joined by an edge and $\ss \tt = \tt \ss$ if they are not.

\subsubsection*{Type $D_n$}\label{SubsubPresDn}

The presentations in type $D_n$ were the main object of study in our first paper \cite[Theorem A]{BaumNeaRees}.

\begin{theorem}\label{ThmPresDn}
	Let $w$ be a quasi-Coxeter element of the Coxeter group $W$ of type $D_n$ and $\Delta_{m,n}$ its associated Carter diagram,
		as shown in Figure~\ref{FigureCarterDiagramDn}.
	Then the interval group $G_{m,n}:= G([1,w])$ admits a presentation over the generators $\ss_1,\dotsc,\ss_n$ corresponding to the vertices of $\Delta_{m,n}$ together with the relations $R_{m,n}:= R(\Delta_{m,n})$ and the twisted cycle commutator relator
        $\tc{\ss_1} {\ss_{m}}{\ss_{m+1}}{\ss_{m+2}}$, associated with the $4$-cycle $(s_1, s_{m}, s_{m+1}, s_{m+2})$ within $\Delta_{m,n}$.
\end{theorem}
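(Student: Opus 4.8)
The plan is to prove that the group $\tilde G_{m,n}$ presented by $\langle \ss_1,\dots,\ss_n \mid R_{m,n},\ \tc{\ss_1}{\ss_m}{\ss_{m+1}}{\ss_{m+2}}\rangle$ is isomorphic to $G([1,w])$. I would not work directly with the defining presentation of Definition~\ref{DefIntervalGrpsCoxeter}, but instead start from the dual presentation of Proposition~\ref{PropDualPresDn}: generators $\RRefl$ indexed by all reflections $\Refl$, with the dual braid relations. The distinguished reflections $s_1,\dots,s_n$ labelling the vertices of $\Delta_{m,n}$ form a reduced $T$-decomposition $w=s_1\cdots s_n$, so their copies $\ss_1,\dots,\ss_n$ are a subset of $\RRefl$. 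The strategy is to exhibit mutually inverse homomorphisms $\phi\colon\tilde G_{m,n}\to G([1,w])$ and $\psi\colon G([1,w])\to\tilde G_{m,n}$, the second obtained essentially by eliminating every non-distinguished generator from the dual presentation via Tietze transformations.

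First I would construct $\phi$ by $\ss_i\mapsto\ss_i$ and verify that the defining relations of $\tilde G_{m,n}$ hold in $G([1,w])$. A relation of $R_{m,n}$ between $\ss_i,\ss_j$ reflects a rank-two sub-interval of $[1,w]$: when $s_i,s_j$ are joined by an edge we have $o(s_is_j)=3$ and when they are not we have $o(s_is_j)=2$, and in each case, after checking that the product $s_is_j$ lies in $[1,w]$, the dual braid relation supported on the parabolic $\langle s_i,s_j\rangle$ gives exactly the required braid or commuting relation. For the twisted cycle commutator I would work inside the type $D_4$ subsystem generated by $s_1,s_m,s_{m+1},s_{m+2}$: the element $t:=s_m^{-1}s_{m+1}s_{m+2}s_{m+1}^{-1}s_m$ is again a reflection of $W$, and one checks $o(s_1t)=2$ with $s_1t\preceq w$, so the dual braid relation yields $\ss_1\bs t=\bs t\,\ss_1$; rewriting $\bs t$ through the braid relations among $\ss_m,\ss_{m+1},\ss_{m+2}$ turns this into $\tc{\ss_1}{\ss_m}{\ss_{m+1}}{\ss_{m+2}}=1$. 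This last computation is the one carried out explicitly in Part~I \cite{BaumNeaRees}. Surjectivity of $\phi$ follows from Hurwitz transitivity (Theorem~\ref{LemmaTransitivityQCox}): every reflection appears in some reduced decomposition of $w$ reached from $(s_1,\dots,s_n)$ by Hurwitz moves, each move $\refl_i\mapsto\refl_i\refl_{i+1}\refl_i$ is mirrored by a dual braid relation, so every $\bs t\in\RRefl$ is a word $w_t(\ss_1,\dots,\ss_n)$ in $G([1,w])$; hence the $\ss_i$ generate.

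To invert $\phi$ I would define $\psi$ on the dual generators by $\bs t\mapsto w_t(\ss_1,\dots,\ss_n)$, using a fixed sequence of Hurwitz moves to assign a word to each reflection, and then verify that every dual braid relation from Proposition~\ref{PropDualPresDn} becomes a consequence of $R_{m,n}$ and $\tc{\ss_1}{\ss_m}{\ss_{m+1}}{\ss_{m+2}}$ in $\tilde G_{m,n}$. Since $\phi$ and $\psi$ both fix the distinguished generators, once both are shown to be well defined the composites $\phi\psi$ and $\psi\phi$ are the identity (using the surjectivity computation for $\phi\psi=\mathrm{id}$), and the theorem follows.

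The main obstacle is exactly the well-definedness of $\psi$: after eliminating the non-distinguished generators by Tietze transformations, one must show that no relation survives beyond $R_{m,n}$ and the single twisted cycle commutator. Because $[1,w]$ is not a lattice (Theorem~\ref{ThmLattice}) there is no Garside normal form available as a shortcut, so this has to be done by hand. Concretely, the word $w_t$ attached to a reflection must be shown independent of the chosen Hurwitz path; two paths differ by a sequence of elementary moves, and because $\Delta_{m,n}$ is a tree with a single adjoined $4$-cycle, every such discrepancy is generated by circulating once around that cycle, where the defect is governed precisely by $\tc{\ss_1}{\ss_m}{\ss_{m+1}}{\ss_{m+2}}$. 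I would organise the verification as an induction on $\ell_\Refl$ over the rank-two elements of $[1,w]$, classifying each as of type $A_1\times A_1$ or $A_2$ and reducing its dual braid relation to the distinguished relators, treating the relations forced by the central cycle separately through the twisted cycle commutator; this combinatorial bookkeeping is the technical heart of the argument.
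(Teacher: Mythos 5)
Your overall frame is the right one, and it matches how this result is actually established: the present paper does not prove Theorem~\ref{ThmPresDn} at all, but quotes it from Part I \cite{BaumNeaRees}, and the proof there has exactly the shape you describe --- start from the dual presentation of Proposition~\ref{PropDualPresDn}, express every generator in $\RRefl$ as a word in $\ss_1,\dotsc,\ss_n$ via Hurwitz moves, then show by Tietze transformations that the dual braid relations collapse onto $R_{m,n}$ together with the single twisted cycle commutator. Your easy half is also sound: verifying $\tc{\ss_1}{\ss_m}{\ss_{m+1}}{\ss_{m+2}}$ in $G([1,w])$ via the commuting dual braid relation between $s_1$ and the reflection $t = s_m s_{m+1} s_{m+2} s_{m+1} s_m$ is essentially the computation recorded in Part I, and surjectivity of $\phi$ from Hurwitz transitivity (Theorem~\ref{LemmaTransitivityQCox}) is correct.

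The genuine gap is that the entire content of the theorem sits in the step you defer. You assert that, because $\Delta_{m,n}$ is a tree with one adjoined $4$-cycle, every discrepancy between two Hurwitz paths reaching the same reflection is ``generated by circulating once around that cycle,'' with defect governed by the twisted cycle commutator. That claim is not a reduction of the problem; it \emph{is} the problem. Hurwitz paths live in the braid group acting on $n$-tuples of reflections, not in the diagram, and there is no a priori reason the relation module of the dual presentation should be normally generated by the relators you keep --- establishing this for $D_n$ is what occupies the bulk of Part I, via explicit models of the reflections and of the interval, a genuine induction, and a long case-by-case rewriting of the dual braid relations (both the $A_1\times A_1$ and the $A_2$ ones) into the distinguished relators. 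Your proposed organisation, an ``induction on $\ell_\Refl$ over the rank-two elements,'' does not even have a well-defined induction parameter, since all rank-two elements have reflection length two, and it supplies no mechanism for controlling which relations survive the elimination of the generators in $\RRefl \setminus \{\ss_1,\dotsc,\ss_n\}$. So the proposal is a correct plan with a correct forward direction, but the hard half --- well-definedness of $\psi$, equivalently that nothing beyond $R_{m,n}$ and $\tc{\ss_1}{\ss_m}{\ss_{m+1}}{\ss_{m+2}}$ survives --- is announced rather than proved.
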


We would also like to draw attention to the alternative presentations for $G_{m,n}$ that are described in \cite{BHNR_Part3}. 

\subsubsection*{Types $E_6$, $E_7$ and $E_8$}

We prove the following results computationally. We will explain later the computational steps used in the proofs. 
Note that the presentations we obtain in Theorem~\ref{ThmEnArtin} were already described in \cite{GrantMarsh} and \cite{Haley}. The main result of this section is Theorem~\ref{ThmEnIntervalProper}.\\

We consider a Carter diagram to be orientable if its edges can be oriented in such a way that each $4$-cycle is oriented. All Carter diagrams of types $E_n(a_i)$ that appear in Figures~\ref{FigureCarterDiagramsE6} to~\ref{FigureCarterDiagramsE8} are orientable except for $E_7(a_4)$, $E_8(a_7)$, and $E_8(a_8)$.

\begin{theorem}\label{ThmEnArtin}
Let $W$ be a Coxeter group of types $E_n$ for $n=6,7$ or $8$. 
	Let $E_n(a_i)$  be an oriented
	 Carter diagram. Then the Artin group $A(E_n)$ associated with $W$ admits a presentation over the generators corresponding to the vertices of $E_n(a_i)$ with the relations $R(E_n(a_i))$ and a set of cycle commutator relators, 
one corresponding to each oriented $4$-cycle in the diagram.
\end{theorem}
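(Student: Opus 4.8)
The plan is to show that the Carter-diagram presentation and the standard Coxeter-diagram presentation define isomorphic groups by exhibiting mutually inverse homomorphisms between them. Write $A(E_n) = \langle x_1, \dots, x_n \mid R(E_n)\rangle$ for the standard presentation on the simple generators, and let $G$ denote the group presented on generators $\ss_1, \dots, \ss_n$ (one per vertex of $E_n(a_i)$) subject to $R(E_n(a_i))$ together with one cycle commutator relator for each oriented $4$-cycle. Since both are presentations on $n$ generators and each vertex of $E_n(a_i)$ corresponds to a reflection of $W$, the strategy is to identify, for each diagram $E_n(a_i)$, explicit words realising one set of generators in terms of the other, and to verify that the defining relations are respected in both directions.

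First I would construct a homomorphism $\phi\colon G \to A(E_n)$. For each Carter generator $\ss_j$ I would assign a word $\phi(\ss_j)$ in $x_1, \dots, x_n$: the vertices of $E_n(a_i)$ carry a labelling by reflections coming from a bipartite reflection decomposition of the associated quasi-Coxeter element, and each such reflection lifts canonically to a conjugate of a simple generator inside the Artin monoid. With these assignments fixed, the content is to check that $\phi$ respects every relation of $G$. The braid relations in $R(E_n(a_i))$ follow from the corresponding relations in $A(E_n)$; the more delicate check is that each cycle commutator relator maps to the identity. As noted before the theorem, the four cyclic rotations $\cc{\ss_1}{\ss_2}{\ss_3}{\ss_4}$, $\cc{\ss_2}{\ss_3}{\ss_4}{\ss_1}$, $\cc{\ss_3}{\ss_4}{\ss_1}{\ss_2}$, $\cc{\ss_4}{\ss_1}{\ss_2}{\ss_3}$ of a given $4$-cycle are equivalent, so it suffices to verify one representative per oriented cycle; the orientability hypothesis is precisely what guarantees a consistent choice of such representatives across the diagram.

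Conversely I would construct $\psi\colon A(E_n) \to G$ by expressing each simple generator $x_i$ as a word $\psi(x_i)$ in the $\ss_j$, reading off the substitution from the combinatorial relationship between the Coxeter diagram and the Carter diagram, and checking that $R(E_n)$ is satisfied in $G$. The isomorphism then follows from $\phi\circ\psi = \mathrm{id}$ and $\psi\circ\phi = \mathrm{id}$, verified on generators; it is precisely in the computation $\psi\circ\phi = \mathrm{id}_G$ that the cycle commutator relators are consumed, confirming that no further relations are needed and, in particular, that $G$ is not a proper quotient of $A(E_n)$.

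The main obstacle is the injectivity direction, that is, certifying that the Carter-diagram relations together with the cycle commutators are \emph{sufficient} to present $A(E_n)$ and do not collapse it further. Unlike type $D_n$, the diagrams $E_n(a_i)$ form no uniform infinite family and admit no single substitution formula, so the substitution words and the verification of the mutual-inverse identities must be produced case by case. Each such verification is a finite sequence of Tietze transformations (equivalently, a coset enumeration of the trivial subgroup), which is why we carry it out computationally in GAP; the theorem's restriction to orientable diagrams is essential, since for the non-orientable diagrams $E_7(a_4)$, $E_8(a_7)$, and $E_8(a_8)$ the cycle commutator relators are not well defined as written.
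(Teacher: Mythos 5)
Your high-level strategy is workable in principle, but it is genuinely different from the paper's route, and the difference matters. You propose to connect the Carter presentation directly to the standard Coxeter-diagram presentation of $A(E_n)$ by mutually inverse substitution homomorphisms $\phi$ and $\psi$. The paper never does this: its bridge to $A(E_n)$ is the dual braid presentation. By Theorem~\ref{ThmGarsideArtin}, the interval group $G([1,c])$ of a Coxeter element $c$ \emph{is} the Artin group $A(E_n)$, and Proposition~\ref{PropDualPresDn} presents it on the full set of lifted reflections with the dual braid relations. The proof (Steps 1--4 of Section~\ref{SecPres}) then chooses the Carter generators \emph{among} these reflection generators, expresses every remaining reflection generator as a word over $\SS \cup \SS^{-1}$ using the dual braid relations, substitutes, and uses \texttt{kbmag} (Knuth--Bendix rewriting) plus hand computation to reduce the surviving relations to $R(E_n(a_i))$ together with the cycle commutators. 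This buys two things your approach lacks: first, the lifts of the Carter reflections are canonical --- they are generators of the dual presentation --- whereas your phrase ``lifts canonically to a conjugate of a simple generator'' is not well defined, since a reflection has many expressions as a conjugate of a simple reflection and different conjugating words can give different elements of $A(E_n)$, so you would need to make and justify explicit choices; second, the identical pipeline applied to a proper quasi-Coxeter class proves Theorem~\ref{ThmEnIntervalProper}, where there is no ambient Artin group to map onto, so the paper obtains Theorem~\ref{ThmEnArtin} as a special case of a uniform method rather than by a bespoke two-way-map argument.

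There are also two concrete errors. First, ``a finite sequence of Tietze transformations (equivalently, a coset enumeration of the trivial subgroup)'' is wrong: $A(E_n)$ and $G$ are infinite, so coset enumeration over the trivial subgroup can never terminate. The Tietze-transformation half of the sentence is the legitimate mechanism (and is essentially what the paper does), but certifying each step requires deciding word equalities in $G$, a group whose word problem is not a priori solvable; this is precisely why the paper runs Knuth--Bendix completion via \texttt{kbmag} rather than any enumeration, and your proposal needs the same ingredient for $\psi$ being well defined and for $\psi\circ\phi = \mathrm{id}_G$. Second, your account of the orientability hypothesis is off: in a non-orientable diagram each individual $4$-cycle can still be oriented and a cycle commutator written down; what fails is the existence of a single edge-orientation of the whole diagram inducing orientations of all $4$-cycles simultaneously. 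Moreover, the paper does not prove that the presentations fail for $E_7(a_4)$, $E_8(a_7)$, and $E_8(a_8)$ --- it reports that those computations could not be completed and explicitly leaves the question open.
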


Note that given any orientation of the Carter diagram considered in Theorem~\ref{ThmEnArtin} provides the result of the theorem.
Exactly the same diagrams are covered by \cite[Theorem 1.1]{Haley},
which derives a presentation relative to a diagram $\Gamma'$, of an Artin 
group of type $\Gamma$, whenever $\Gamma'$ can be derived from $\Gamma$ by a 
sequence of {\em mutations} (see also \cite{GrantMarsh}). Note that in Theorem~\ref{ThmPres}, Theorem~\ref{ThmEnIntervalProper}, and Theorem~\ref{ThmCameron}, we consider all Carter diagrams of types $E_n(a_i)$ and not only the orientable ones.

\begin{theorem}\label{ThmEnIntervalProper}
Let $W$ be a Coxeter group of types $E_n$ for $n=6,7$ or $8$. 
Let $w$ be a quasi-Coxeter element, and let $\Delta$ be the Carter diagram associated with $w$.

Then the interval group $G([1,w])$ admits a presentation over the generators corresponding to the vertices of $\Delta$ with the relations $R(\Delta)$ and a set of twisted cycle commutator relators, one corresponding to each $4$-cycle in the diagram. 
\end{theorem}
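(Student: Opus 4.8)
The plan is to establish Theorem~\ref{ThmEnIntervalProper} by exhibiting an explicit isomorphism between the dual presentation of $G([1,w])$ given in Proposition~\ref{PropDualPresDn} and the proposed presentation over the generators of the Carter diagram $\Delta$ with relations $R(\Delta)$ together with the twisted cycle commutator relators. Since $w$ is a proper quasi-Coxeter element in $E_6$, $E_7$, or $E_8$, there are only finitely many conjugacy classes (two, four, and eight respectively), so the overall strategy is a case-by-case computational verification, one diagram at a time, that these two presentations define isomorphic groups. First I would fix, for each Carter diagram $\Delta$, a specific reduced $T$-decomposition $w = t_1 \cdots t_n$ whose factors $t_1,\dots,t_n$ generate $W$ and correspond to the vertices of $\Delta$ in such a way that $o(t_it_j)$ matches the edge data of $\Delta$. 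The bold generators $\ss_1,\dots,\ss_n$ of the target presentation map to the elements $\tt_1,\dots,\tt_n \in \RRefl$ of the dual presentation, and I would check that the braid relations $R(\Delta)$ and each twisted cycle commutator $\tc{\ss_1}{\ss_2}{\ss_3}{\ss_4}$ indeed hold in $G([1,w])$, i.e.\ that they are consequences of the dual braid relations.

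The two directions require different tools. For the direction showing that every defining dual braid relation of $G([1,w])$ follows from $R(\Delta)$ and the twisted cycle commutator relators, I would proceed computationally: using GAP \cite{GAP4} I would enumerate all length-$2$ elements of the poset $([1,w],\preceq)$, read off the corresponding dual braid relations $\rrefl\rrefl' = \rrefl'\rrefl''$, express every reflection $\tt \in \RRefl$ as a word in the $\ss_i$ via a chosen Hurwitz path from $(t_1,\dots,t_n)$, and then verify in the finitely presented group $\langle \ss_1,\dots,\ss_n \mid R(\Delta),\ \text{twisted cycle commutators}\rangle$ that each such relation is satisfied. This is exactly the kind of coset-enumeration or rewriting check that can be discharged by the low-index subgroup and Todd--Coxeter routines in GAP. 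The reverse inclusion, that $R(\Delta)$ and the twisted cycle commutators hold in $G([1,w])$, is the easier half: the braid relations between $\ss_i,\ss_j$ live on length-$2$ intervals and are dual braid relations by construction, while the twisted cycle commutator relator, being a relation between positive words as noted just before Theorem~\ref{ThmPresDn}, can be checked to hold inside the interval by a direct Hurwitz computation mimicking the $D_4$ case treated in \cite{BaumNeaRees}.

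The crucial point, and the reason the argument reduces to finitely many honest computations, is that both presentations present groups of the same finite rank $n$ with abelianisations and low-index subgroup structure that one can compare; to conclude an \emph{isomorphism} rather than merely a surjection in one direction, I would verify that the map is bijective on generators and that each set of relations is derivable from the other, so that the two presentations are Tietze-equivalent. The main obstacle will be the non-orientable diagrams $E_7(a_4)$, $E_8(a_7)$, and $E_8(a_8)$, which are explicitly excluded from the Artin-group Theorem~\ref{ThmEnArtin} precisely because their $4$-cycles cannot be coherently oriented; for these one cannot appeal to the oriented cycle commutator description and must instead check that the \emph{twisted} cycle commutator, whose two opposite orientations agree by the identity $\ss_2^{-1}\ss_3\ss_4\ss_3^{-1}\ss_2 =_G \ss_4^{-1}\ss_3\ss_2\ss_3^{-1}\ss_4$ established above, yields well-defined relators independent of a global orientation. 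Confirming that the full relation set still forces the dual presentation for these three exceptional diagrams — and that the twisted relator genuinely differs from the untwisted one in exactly the way needed — is where I expect the delicate part of the verification to lie, and where the computational check is indispensable.
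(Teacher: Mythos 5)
Your proposal takes essentially the same route as the paper: fix a representative quasi-Coxeter element and a set of reflections realising the Carter diagram, start from the dual presentation of Proposition~\ref{PropDualPresDn}, express all remaining reflection generators as words over the chosen ones and their inverses, and verify case by case in GAP that the rewritten dual braid relations are consequences of $R(\Delta)$ together with the twisted cycle commutator relators, i.e.\ that the two presentations are Tietze-equivalent. One caveat on tooling: since these interval groups are infinite, Todd--Coxeter coset enumeration and low-index subgroup routines cannot certify that a word is a consequence of the relators; the paper instead uses the Knuth--Bendix rewriting package \texttt{kbmag} together with computation by hand, which corresponds to the ``rewriting check'' alternative you mention.
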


An analogous result for Coxeter groups arises as
a consequence of \cite[Theorem~6.10]{Cameron}, which uses a process called {\em switching}, similar to the process of mutation described in \cite{GrantMarsh, Haley}.

\begin{theorem}\label{ThmCameron}
	Let $W$ be a simply laced Coxeter group, and $\Delta$ be a Carter diagram associated with $W$.
	Then the Coxeter group $W$ admits a presentation over the generators corresponding to the vertices of $\Delta$ with the quadratic relations on the generators, the relations $R(\Delta)$ and a 
	set of cycle commutator relators, one corresponding to each $4$-cycle in $\Delta$. 	
\end{theorem}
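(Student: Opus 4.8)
The plan is to deduce the result from the switching operation of \cite[Theorem~6.10]{Cameron}, proceeding in parallel with the proof of Theorem~\ref{ThmEnArtin} but working directly inside the Coxeter group rather than inside the Artin group. Let $\hat W(\Delta)$ denote the group defined by the presentation in the statement: generators indexed by the vertices of $\Delta$, the quadratic relations $\ss^2 = 1$, the braid relations $R(\Delta)$, and one cycle commutator relator $\cc{\ss_1}{\ss_2}{\ss_3}{\ss_4}$ for each $4$-cycle of $\Delta$. First I would observe that, in the presence of the quadratic relations, each generator is its own inverse, so the cycle and twisted cycle commutators coincide and the distinction between an oriented and an unoriented $4$-cycle disappears. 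This is precisely why, unlike in Theorem~\ref{ThmEnArtin}, no orientability hypothesis on $\Delta$ is needed and the presentation is well defined for every Carter diagram associated with $W$.

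Next I would exhibit a surjection $\hat W(\Delta) \to W$. Sending each generator to the corresponding reflection $t_i$ of the Carter base of $\Delta$, the quadratic relations hold because $t_i^2 = 1$, and the braid relations $R(\Delta)$ hold by the very definition of the Carter diagram (an edge encodes $o(t_it_j) = 3$ and a non-edge encodes commutation). It remains to check that each cycle commutator relator evaluates to the identity in $W$; since a $4$-cycle involves only four reflections satisfying the braid pattern of a type $D_4$ diagram, this reduces to a single verification inside a rank-$4$ subgroup, that is, to the type $D_4$ computation already carried out in \cite{BaumNeaRees}. As the $t_i$ generate $W$, the map is onto.

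For the reverse direction I would use switching to connect $\Delta$ to the Coxeter diagram $\Delta_0$ of $W$. When $\Delta = \Delta_0$ there are no $4$-cycles, so $\hat W(\Delta_0)$ is exactly the standard Coxeter presentation and equals $W$. A switching replaces the Carter base by another base through a local Hurwitz-type move, sending $\Delta$ to a new Carter diagram $\Delta'$; by \cite[Theorem~6.10]{Cameron} the associated Tietze transformations induce an isomorphism $\hat W(\Delta) \cong \hat W(\Delta')$. Since bases are connected by Hurwitz-type moves, which at the level of diagrams are exactly the switchings (with the transitivity underlying Theorem~\ref{LemmaTransitivityQCox} as the driving mechanism), every Carter diagram associated with $W$ is reachable from $\Delta_0$ by a finite sequence of switchings, and composing the resulting isomorphisms yields $\hat W(\Delta) \cong \hat W(\Delta_0) = W$.

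The hard part will be the bookkeeping hidden in \cite[Theorem~6.10]{Cameron}, namely tracking how the full relator set transforms under a single switch: one must verify that the braid relations and cycle commutator relators of $\Delta$ are carried, via Tietze transformations, exactly onto $R(\Delta')$ together with the cycle commutator relators of the $4$-cycles of $\Delta'$. In particular, when a switch creates or destroys a $4$-cycle, the corresponding cycle commutator relator must appear, or be shown redundant, in a controlled way; establishing this matching for each admissible switch is the technical core that makes the inductive argument go through.
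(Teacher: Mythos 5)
Your first two paragraphs are sound and consistent with the paper: under the quadratic relations each generator is an involution, so the cycle and twisted cycle commutators coincide, the orientability issue of Theorem~\ref{ThmEnArtin} disappears, and the surjection $\hat W(\Delta)\to W$ reduces to a rank-$4$ verification of the cycle commutator inside a $D_4$ reflection subgroup. Note, however, that the paper does not run any induction of its own here: it obtains Theorem~\ref{ThmCameron} directly as a consequence of \cite[Theorem~6.10]{Cameron}, i.e.\ as a translation into Carter-diagram language of a known presentation theorem for Weyl groups; the switching process and all of its bookkeeping live inside that cited result.

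The genuine gap is in your connectivity step, and it is fatal as written. You claim that every Carter diagram associated with $W$ is reachable from the Coxeter diagram $\Delta_0$ by switchings because ``bases are connected by Hurwitz-type moves,'' invoking the transitivity of Theorem~\ref{LemmaTransitivityQCox}. But a Hurwitz move preserves the product of the tuple, hence preserves the quasi-Coxeter element $w$, its conjugacy class, and therefore its Carter diagram; Theorem~\ref{LemmaTransitivityQCox} asserts transitivity only on the reduced decompositions of one \emph{fixed} element and can never connect decompositions of two non-conjugate elements. Concretely, in type $E_6$ the element attached to $E_6(a_1)$ has order $9$ while a Coxeter element has order $12$ (see the tables in Section~\ref{SecPres}), so no sequence of product-preserving moves takes the $E_6(a_1)$ base to the Coxeter base; with Hurwitz moves as the engine, your induction never leaves $\Delta$ and never reaches $\Delta_0$. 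The operation that does connect distinct diagrams --- the (local) switching of \cite{Cameron}, like the mutation of \cite{GrantMarsh,Haley} --- genuinely changes the set of reflections and their product, and the facts that it connects the relevant bases and that the relator sets match up move by move are precisely the content of \cite[Theorem~6.10]{Cameron}, not consequences of anything in this paper. So the ``hard part'' you defer in your last paragraph is not a technical remainder of your argument; together with the connectivity you misattribute to Hurwitz transitivity, it \emph{is} the cited theorem, and the efficient correct proof is simply to quote it, as the paper does.
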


Our proofs of 
Theorems~\ref{ThmEnArtin} and \ref{ThmEnIntervalProper} are large computations,
some of them requiring significant computing power over a long period of time.
The presentations were established in sequences of steps, which we describe now. First, we describe the steps that prove Theorem~\ref{ThmEnIntervalProper}.\\

\textit{Step 1.} We choose a representative for each conjugacy class of quasi-Coxeter elements. The computer programs that we used can be found at \\
\url{https://www.math.uni-bielefeld.de/~baumeist/Dual-Coxeter/dual-Coxeter.html}.
Associated to such representative is a Carter diagram (see Section~\ref{SubCarterDiagrams}). We distinguish between the conjugacy classes 
using the orders of the quasi-Coxeter elements. Recall that the order of a Coxeter element is precisely the Coxeter number. For types $E_n$ ($n=6,7,8$), we summarise the orders in the next tables. In each table, the first column contains Carter diagrams and the second column the orders of the corresponding quasi-Coxeter elements.

\begin{center}
\begin{tabular}{|l|l|}
	\hline
	$E_6(a_1)$ & $9$\\
	\hline
	$E_6(a_2)$ & $6$\\
	\hline
	$E_6$ & $12$\\
	\hline
\end{tabular}\quad
\begin{tabular}{|l|l|}
	\hline
	$E_7(a_1)$ & $14$\\
	\hline
	$E_7(a_2)$ & $12$\\
	\hline
	$E_7(a_3)$ & $30$\\
	\hline
	$E_7(a_4)$ & $6$\\
	\hline
	$E_7$ & $18$\\
	\hline
\end{tabular}\quad
\begin{tabular}{|l|l|}
	\hline
	$E_8(a_1)$ & $24$\\
	\hline
	$E_8(a_2)$ & $20$\\
	\hline
	$E_8(a_3)$ & $12$\\
	\hline
	$E_8(a_4)$ & $18$\\
	\hline
	$E_8(a_5)$ & $15$\\
	\hline
	$E_8(a_6)$ & $10$\\
	\hline
	$E_8(a_7)$ & $12$\\
	\hline
	$E_8(a_8)$ & $6$\\
	\hline
	$E_8$ & $30$\\
	\hline
\end{tabular}
\end{center}

\textit{Step 2.} We determine a presentation of the interval group related to the chosen quasi-Coxeter element as follows. First, we determine the length over $T$ of the elements in $W$ and then construct those of length $2$ that divide $w$. From these elements, it is easy to define the dual braid relations that describe our presentation of the interval group.

\textit{Step 3.} We choose a set of reflections $S$ of cardinality the rank of the Coxeter group such that the relations between the corresponding elements in the interval group are those that describe the relations of the Carter diagram related to the conjugacy class of the quasi-Coxeter element. We denote by $\SS$ the copy of $S$ in the interval group.

Using the dual braid relations, we determine an expression over $\SS \cup \SS^{-1}$ of all the generators in $\boldsymbol{T} \backslash \SS$ of the interval group. Finally, we replace the elements that belong to $\boldsymbol{T} \backslash \SS$ in the dual braid relations by their expressions over $\SS \cup \SS^{-1}$.

\textit{Step 4.} Using the package \verb|kbmag| \cite{KBMAG} of GAP \cite{GAP4} and a computation by hand, we show that all the relations other than the one described by the Carter diagram and the corresponding commutator relators simplify in the interval group (see Theorems~\ref{ThmEnArtin},~\ref{ThmEnIntervalProper} for the type of the commutator relators).\\

Now Theorem~\ref{ThmEnArtin} is obtained by considering the conjugacy class of the Coxeter element in \textit{Step 1}, and then applying \textit{Step 2} to \textit{Step 4} for all the related Carter diagrams that appear in Theorem~\ref{ThmEnArtin}. Note that we attempted to construct the presentations for the non-orientable Carter diagrams that are excluded in Theorem~\ref{ThmEnArtin}. But we were unable to complete the computations, hence it seems likely (although it is not proved) that the $E_n$ Artin groups do not have presentations corresponding to those diagrams.\\

As an evidence on how difficult the computation is, consider the case $E_8(a_6)$ of Theorem~\ref{ThmEnIntervalProper} as an example. The related proper quasi-Coxeter element considered in \textit{Step 1} is of order $10$. The number of the dual braid relations we obtain in \textit{Step 2} is $3630$. Theorem~\ref{ThmEnIntervalProper} describes a presentation of the related interval group over $8$ generators and $31$ relations (these are the relations of the Carter diagram $E_8(a_6)$ along with $3$ twisted cycle commutators). The length of the longest relation we simplified in \mbox{\textit{Step 4}} is $2000$.

\subsubsection*{Types $H_3$, $H_4$ and $F_4$}\label{SubPresH3H4}

We start with type $H_4$, where the interval groups of proper quasi-Coxeter elements are sorted out quickly. Actually, there exist ten conjugacy classes of proper quasi-Coxeter elements. None of the intervals are lattices, as we already
mentioned in Section~\ref{SubLattice}. We compare the results of applying the function \verb|LowIndexSubgroupsFpGroup| to these groups within GAP to show that these interval groups are not isomorphic to the Artin group of type $H_4$.\\

For type $H_3$, we have two conjugacy classes of proper quasi-Coxeter elements that we denote by $H_3(a_1)$ and $H_3(a_2)$. Using GAP, we obtain the following results.

\begin{theorem}\label{ThmH3_a1}
	The interval group related to the proper quasi-Coxeter element $H_3(a_1)$ is isomorphic to the Artin group of type $H_3$. Since the interval is a lattice, then this interval group is also a Garside group.
\end{theorem}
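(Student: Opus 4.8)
The statement has two parts, and I would dispatch first the Garside property, which is formal. Since $w$ is a quasi-Coxeter element and $\Refl$ is stable under conjugation, $w$ is balanced, so $[1,w]=[1,w]_r$. Theorem~\ref{ThmLattice} asserts that in type $H_3$ the poset $([1,w],\preceq)$ is a lattice for \emph{every} quasi-Coxeter element, in particular for $w=H_3(a_1)$, and balancedness transports this to $([1,w]_r,\preceq_r)$. Both hypotheses of Theorem~\ref{TheoremMainThmIntGarTheory} are then met, so $G([1,w])$ is an interval Garside group. The substantive content is therefore the isomorphism $G([1,w])\cong A(H_3)$.

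For the isomorphism, the plan is to transform the dual presentation of $G([1,w])$ into the Artin presentation
$$A(H_3)=\langle a,b,c\mid aba=bab,\ ac=ca,\ bcbcb=cbcbc\rangle$$
by Tietze moves. First, using Proposition~\ref{PropDualPresDn}, together with the Hurwitz transitivity supplied by Theorem~\ref{LemmaTransitivityQCox}, I would write out the presentation of $G([1,w])$ over the full generating set $\RRefl$, with one dual braid relation for each length-$2$ element of $[1,w]$. Next, following the scheme used for the $E_n$ types, I would choose a reduced $T$-decomposition $w=t_1t_2t_3$ with $\langle t_1,t_2,t_3\rangle=W$ and pass to the copies $\ss_1,\ss_2,\ss_3\in\RRefl$, selected so that the relations visible on the length-$2$ divisors of $w$ among these three are exactly the braid relations $R(H_3)$, that is, one relation each of order $3$, $2$ and $5$.

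The decisive feature to exploit is that $A(H_3)$ is presented on the $H_3$ Coxeter diagram, which is a tree and hence carries no $4$-cycle and no cycle- or twisted-cycle-commutator relator; the task is to show the interval group realises precisely this presentation (in contrast to $H_3(a_2)$, where a genuinely different Garside group appears). Using the dual braid relations I would express each remaining generator in $\RRefl\setminus\{\ss_1,\ss_2,\ss_3\}$ as a word over $\{\ss_1^{\pm1},\ss_2^{\pm1},\ss_3^{\pm1}\}$, and then eliminate those generators by Tietze transformations, arriving at a presentation on three generators whose relators are the substituted dual braid relations.

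The hard part is to show that after this substitution every remaining relator is a consequence of $R(H_3)$ alone, so that it may be deleted and the presentation collapses exactly to that of $A(H_3)$. This is a confluence verification rather than a conceptual step: although the number of dual braid relations is far smaller here than in the $E_8$ cases, it is most safely handled by running the rewriting through \verb|kbmag| \cite{KBMAG} within GAP \cite{GAP4} and confirming that the resulting system adds nothing to $R(H_3)$. The two places where care is needed are the initial choice of the triple $\{t_1,t_2,t_3\}$---a poor choice fails to realise the Artin relations directly and forces spurious relators---and the certification that the Knuth--Bendix procedure terminates with a confluent system on the relevant words; once both are secured, the Tietze equivalence yields $G([1,w])\cong A(H_3)$.
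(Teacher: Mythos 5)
Your proposal is correct and follows essentially the same route as the paper: the Garside claim via balancedness of quasi-Coxeter elements, Theorem~\ref{ThmLattice}, and Theorem~\ref{TheoremMainThmIntGarTheory}, and the isomorphism $G([1,w])\cong A(H_3)$ by the paper's own GAP-based scheme (\textit{Steps 1--4}: dual presentation from Proposition~\ref{PropDualPresDn}, choice of a reduced triple realising $R(H_3)$, Tietze elimination, and \verb|kbmag| collapse), which is exactly how the paper disposes of $H_3(a_1)$ ("Using GAP, we obtain the following results").
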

	
\begin{theorem}\label{ThmH3_a2}
	The interval group related to $H_3(a_2)$ admits a presentation over three generators $\ss_1,\ss_2,\ss_3$ and the relations are described by the following diagram presentation \begin{center}
		\tikzset{every node/.style={font=\scriptsize}}
		\begin{tikzpicture}
			\node[circle, draw, fill=black!50,
			inner sep=0pt, minimum width=4pt, label=left:$\ss_3$] (3) at (0,0) {};
			\node[circle, draw, fill=black!50,
			inner sep=0pt, minimum width=4pt, label=right:$\ss_2$] (2) at (2,0) {};
			\node[circle, draw, fill=black!50,
			inner sep=0pt, minimum width=4pt,label=above:$\ss_1$] (1) at (1,1) {};
			
			\draw[-] (1) to (3);
			\draw[-] (1) to (2);
			\draw[-] (2) to node[below] {$5$} (3);
		\end{tikzpicture}
	\end{center}
	along with the two relations
	$$\ss_2\ss_3\ss_2\ss_1\ss_3\ss_2 = \ss_3\ss_2\ss_1\ss_3\ss_2\ss_3, \ (\ss_3\ss_2\ss_1)^3 = (\ss_1\ss_3\ss_2)^3.$$
	Since the interval is a lattice, this interval group is a Garside group.
\end{theorem}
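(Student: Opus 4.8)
The plan is to follow the computational strategy of Steps~1--4 used for the types $E_n$, specialised to the single class $H_3(a_2)$, while deducing the Garside assertion from the general theory already in place. First I would dispose of the final sentence: by Theorem~\ref{ThmLattice} the poset $([1,w],\preceq)$ is a lattice for every quasi-Coxeter element $w$ in type $H_3$, and since $\Refl$ is stable under conjugation the element $w$ is balanced (as noted after Theorem~\ref{TheoremMainThmIntGarTheory}), so $[1,w]=[1,w]_r$ and both posets are lattices. Hence Theorem~\ref{TheoremMainThmIntGarTheory} applies directly and gives that $G([1,w])$ is an interval Garside group.

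For the presentation itself I would begin from the dual presentation of Proposition~\ref{PropDualPresDn}: in GAP, realise $W$ as a reflection group, compute the reflection set $\Refl$, determine $\ell_{\Refl}$ on $W$, and extract the length-$2$ elements dividing $w$ in order to read off the complete list of dual braid relations on the generating set $\RRefl$. This produces a large but fully explicit presentation of $G([1,w])$ indexed by the reflections. I would then select a triple $S=\{t_1,t_2,t_3\}\subset\Refl$ whose images $\ss_1,\ss_2,\ss_3$ in $G([1,w])$ satisfy precisely the relations encoded by the displayed diagram, namely the braid relations $\ss_1\ss_2\ss_1=\ss_2\ss_1\ss_2$ and $\ss_1\ss_3\ss_1=\ss_3\ss_1\ss_3$ together with the length-$5$ relation $\ss_2\ss_3\ss_2\ss_3\ss_2=\ss_3\ss_2\ss_3\ss_2\ss_3$ between $\ss_2$ and $\ss_3$; such a triple is found by searching the reflections for the required pairwise orders, which are visible among the dual braid relations.

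Writing $\SS$ for the copy of $S$, I would next use the dual braid relations to express every generator in $\RRefl\setminus\SS$ as a word in $\SS\cup\SS^{-1}$, substitute these expressions back into all of the dual braid relations, and perform Tietze eliminations to obtain a presentation of $G([1,w])$ on the three generators $\ss_1,\ss_2,\ss_3$ alone. The decisive and most laborious step is then to verify that this rewritten three-generator presentation is Tietze-equivalent to the claimed one, i.e. that the displayed diagram relations together with the two relations $\ss_2\ss_3\ss_2\ss_1\ss_3\ss_2=\ss_3\ss_2\ss_1\ss_3\ss_2\ss_3$ and $(\ss_3\ss_2\ss_1)^3=(\ss_1\ss_3\ss_2)^3$ have the same normal closure in the free group as the full list of substituted dual braid relations.

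One direction is the finite check that the two additional relations hold among $\ss_1,\ss_2,\ss_3$ in $G([1,w])$, which is decidable because $G([1,w])$ is Garside and therefore has solvable word problem; this yields a surjection from the claimed group onto $G([1,w])$. The reverse, and harder, direction is to derive every substituted dual braid relation from the claimed relations, showing that this surjection is injective. I expect this to be the main obstacle, and I would carry it out by running Knuth--Bendix completion with \verb|kbmag| on the claimed presentation and reducing each substituted dual braid relation to the empty word in the resulting rewriting system, supplementing the machine output with the hand manipulations needed to certify the longest rewritings, exactly as in Step~4 for the $E_n$ cases.
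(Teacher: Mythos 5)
Your proposal is correct and matches the paper's own route: the paper proves this theorem by exactly the GAP computation you describe (the Steps~1--4 procedure from the $E_n$ cases -- dual braid relations from Proposition~\ref{PropDualPresDn}, selection of three reflections realising the diagram, rewriting of the remaining generators, and Knuth--Bendix/\verb|kbmag| plus hand simplification), with the Garside assertion obtained, as you say, from Theorem~\ref{ThmLattice} together with Theorem~\ref{TheoremMainThmIntGarTheory} and the fact that quasi-Coxeter elements are balanced.
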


We also show that the interval group of Theorem~\ref{ThmH3_a2} is not isomorphic to the Artin group of type $H_3$ by using \verb|LowIndexSubgroupsFpGroup| within GAP. Hence it defines a new Garside group.

We conjecture that this group is the fundamental group of the complement in $\mathbb{C}^3$ of an algebraic hypersurface.\\

We mention that using the same computational approach that we described previously for the cases $E_6$, $E_7$, and $E_8$, we are able to show the following two results in the case $F_4$.

\begin{theorem}\label{ThmF4Artin}
Let $W$ be a Coxeter group of type $F_4$. Let $F_4(a_1)$ be the Carter diagram illustrated in Figure~\ref{FigureCarterDiagramF4a1} with $(s_1,s_4)$ and $(s_2,s_3)$ the edges with double bonds. Then the Artin group $A(F_4)$ admits a presentation over generators corresponding to the vertices of $F_4(a_1)$ with relations $R(F_4(a_1))$ and the commutator relator $[\ss_2,\ss_3]$.
\end{theorem}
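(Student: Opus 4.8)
The plan is to prove Theorem~\ref{ThmF4Artin} by relating the Carter diagram $F_4(a_1)$ to the standard Coxeter diagram of type $F_4$ and showing that the two presentations define isomorphic groups. The Coxeter diagram of $F_4$ is a path $r_1 - r_2 \Rightarrow r_3 - r_4$ (with the double bond between $r_2$ and $r_3$), whereas $F_4(a_1)$ is the $4$-cycle shown in Figure~\ref{FigureCarterDiagramF4a1}, in which the edges $(\ss_1,\ss_4)$ and $(\ss_2,\ss_3)$ carry double bonds and the edges $(\ss_1,\ss_2)$ and $(\ss_3,\ss_4)$ are single. The target presentation has generators $\ss_1,\ss_2,\ss_3,\ss_4$, the relations $R(F_4(a_1))$ coming from this cycle, together with the single extra commutator relator $[\ss_2,\ss_3]$. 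First I would record explicitly what $R(F_4(a_1))$ is: braid relations of length $4$ across the two double bonds, i.e. $\ss_1\ss_4\ss_1\ss_4 = \ss_4\ss_1\ss_4\ss_1$ and $\ss_2\ss_3\ss_2\ss_3 = \ss_3\ss_2\ss_3\ss_2$, braid relations of length $3$ across the two single bonds, namely $\ss_1\ss_2\ss_1 = \ss_2\ss_1\ss_2$ and $\ss_3\ss_4\ss_3 = \ss_4\ss_3\ss_4$, and commutativity between the two non-adjacent pairs $(\ss_1,\ss_3)$ and $(\ss_2,\ss_4)$.

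The strategy I would use is a Tietze-transformation argument establishing a two-sided map between the two presentations, following the same computational template (Steps~1--4) described above for the $E_n$ cases and used for $F_4(a_1)$. Concretely, starting from the interval-group presentation over all reflections $\RRefl$ (Proposition~\ref{PropDualPresDn}), I would choose in Step~3 a set $\SS = \{\ss_1,\ss_2,\ss_3,\ss_4\}$ of reflections whose pairwise products realise exactly the bonds of $F_4(a_1)$, express every remaining dual generator in $\RRefl$ as a word over $\SS^{\pm 1}$, and substitute these expressions into all the dual braid relations. Since the ambient group here is the \emph{Artin} group $A(F_4)$ (the Coxeter element case, by Theorem~\ref{ThmGarsideArtin}), I would instead define a homomorphism $\varphi$ from the group presented by $F_4(a_1)$ together with $[\ss_2,\ss_3]$ into $A(F_4)$ by sending the four cycle generators to suitable conjugates of the standard Artin generators, and conversely define a map $\psi$ back by expressing each standard generator $r_i$ as a word in the $\ss_j$. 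Verifying that $\varphi$ respects all relations of $R(F_4(a_1))$ and kills the commutator $[\ss_2,\ss_3]$, and that $\psi$ respects the defining $F_4$ braid relations, then checking $\varphi\circ\psi$ and $\psi\circ\varphi$ are the identity on generators, gives the isomorphism.

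The main obstacle I expect is the disappearance (or simplification) of all dual braid relations other than $R(F_4(a_1))$ and $[\ss_2,\ss_3]$ in Step~4 — this is precisely the hard, computationally heavy part flagged in the text, where one must show that the many relations visible among length-$2$ elements of $[1,w]$ are all consequences of the short target relator set. The subtlety special to $F_4$ is the presence of double bonds: the braid relations across $(\ss_1,\ss_4)$ and $(\ss_2,\ss_3)$ have length $4$ rather than $3$, so the usual commutator identities from the simply laced analysis (where one exploits $\ss_2^{-1}\ss_3\ss_4\ss_3^{-1}\ss_2 =_G \ss_4^{-1}\ss_3\ss_2\ss_3^{-1}\ss_4$ as noted above) must be re-derived in this setting, and the twisted versus untwisted distinction collapses into the single relator $[\ss_2,\ss_3]$. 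I would handle this exactly as described: run \verb|kbmag| to confirm the two presentations yield isomorphic groups and to reduce the longer dual relations, supplemented by an explicit hand computation checking that the extra relations rewrite to the identity modulo $R(F_4(a_1))$ and $[\ss_2,\ss_3]$. I expect no obstruction on the purely algebraic side once the correct reflections are chosen in Step~3; the effort is entirely in certifying the rewriting, so the proof is completed by invoking the verified computation together with the Tietze equivalence it supplies.
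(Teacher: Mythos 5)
Your proposal takes essentially the same route as the paper: the paper proves Theorem~\ref{ThmF4Artin} precisely by the computational template of Steps~1--4 applied to the Coxeter element of $W(F_4)$ (so that the interval group is $A(F_4)$ by Theorem~\ref{ThmGarsideArtin}), choosing four reflections realising the $F_4(a_1)$ diagram, rewriting the dual braid relations over these generators, and certifying with \texttt{kbmag} plus hand computation that everything collapses to $R(F_4(a_1))$ together with $[\ss_2,\ss_3]$. Your reading of the diagram and of $R(F_4(a_1))$ is correct, and your identification of Step~4 as the substantive (computational) content matches the paper's proof.
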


\begin{theorem}\label{ThmF4IntervalProper}
Let $W$ be a Coxeter group of type $F_4$. Let $F_4(a_1)$ be the Carter diagram illustrated in Figure~\ref{FigureCarterDiagramF4a1} with $(s_1,s_4)$ and $(s_2,s_3)$ the edges with double bonds. Then the interval group in this case admits a presentation over generators corresponding to the vertices of $F_4(a_1)$ with relations $R(F_4(a_1))$ and the commutator relators $[\ss_2^{-1}\ss_1\ss_2,\ss_3^{-1}\ss_4\ss_3]$ and $[\ss_2\ss_1\ss_2^{-1},\ss_3\ss_4\ss_3^{-1}]$. Furthermore, it is not isomorphic to the Artin group $A(F_4)$.
\end{theorem}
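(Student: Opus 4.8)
The plan is to run, for the single conjugacy class of proper quasi-Coxeter elements of $F_4$, exactly the four-step computational scheme already used above for the types $E_n$, and then to obtain the non-isomorphism as the $F_4$ instance of Theorem~\ref{ThmNonIsom}.

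First (Step~1) I would fix a representative $w$ of the unique conjugacy class of proper quasi-Coxeter elements of $W$; its associated Carter diagram is $F_4(a_1)$ of Figure~\ref{FigureCarterDiagramF4a1}. Running Step~2, I would compute $\ell_\Refl$ on the $24$ reflections of $W$, extract the length-$2$ elements dividing $w$, and read off from them the dual braid relations; by Proposition~\ref{PropDualPresDn} these furnish a (large) presentation of $G([1,w])$ on the generating set $\RRefl$. In Step~3 I would choose four reflections $S=\{s_1,s_2,s_3,s_4\}\subset\Refl$ whose pairwise products realise the labelled $4$-cycle $F_4(a_1)$, namely $o(s_1s_2)=o(s_3s_4)=3$, $o(s_2s_3)=o(s_4s_1)=4$ and $o(s_1s_3)=o(s_2s_4)=2$, so that the relations induced on $\SS$ are precisely $R(F_4(a_1))$; using the dual braid relations I would then rewrite every generator of $\RRefl\setminus\SS$ as a word over $\SS\cup\SS^{-1}$ and substitute these expressions back into all the dual braid relations, a sequence of Tietze transformations leaving a presentation of $G([1,w])$ on the four generators $\SS$.

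The substantive content is Step~4. One direction is routine: I would verify directly, by computing in the interval, that the two commutator relators $[\ss_2^{-1}\ss_1\ss_2,\ss_3^{-1}\ss_4\ss_3]$ and $[\ss_2\ss_1\ss_2^{-1},\ss_3\ss_4\ss_3^{-1}]$, as well as the braid relations $R(F_4(a_1))$, hold in $G([1,w])$. The hard direction is to show that they \emph{suffice}, i.e. that every one of the (rewritten) dual braid relations is a consequence of $R(F_4(a_1))$ together with these two relators. For this I would set up the target presentation and run Knuth--Bendix via \verb|kbmag|, confirming that each rewritten dual braid relation reduces to the identity in the resulting rewriting system. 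I expect this confluence step to be the main obstacle: the analogous relations in the $E_n$ cases run to thousands of letters, and although for $F_4$ they are shorter the reduction is of the same character, while the non-simply-laced double bonds force the single twisted cycle commutator of the simply-laced case to break into the two relators above, so pinning down this two-relator form will need some computation by hand alongside the machine run.

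For the non-isomorphism I would appeal to Theorem~\ref{ThmNonIsom}, of which type $F_4$ is one of the exceptional cases, so that $G([1,w])\not\cong A(F_4)$ follows from the adaptation of Tits' methods developed there. A self-contained computational alternative, matching the flavour of the rest of the argument, is to note first that the two groups have isomorphic abelianisations (both $\cong\Z^2$, since in each case the two single bonds identify $\ss_1$ with $\ss_2$ and $\ss_3$ with $\ss_4$ while the double bonds and the commutator relators impose nothing), so that no abelian invariant separates them; I would then apply \verb|LowIndexSubgroupsFpGroup| in GAP to the two presentations and exhibit an index at which the numbers of finite-index subgroups differ, which is an isomorphism invariant and hence certifies $G([1,w])\not\cong A(F_4)$.
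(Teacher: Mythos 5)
Your treatment of the presentation itself is exactly the paper's: Theorem~\ref{ThmF4IntervalProper} is proved there by running the same Steps 1--4 (GAP computation of the dual braid relations, choice of a generating set $S$ realising the $F_4(a_1)$ diagram, Tietze rewriting, and a \verb|kbmag| plus hand-computation confluence check) that the paper describes for $E_6$, $E_7$, $E_8$, and your Steps 1--3 and your Step 4 reproduce that scheme, including the correct reading of the bond structure of $F_4(a_1)$.

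The flaw is in your primary route to the non-isomorphism statement. Appealing to Theorem~\ref{ThmNonIsom} is circular: the Tits-style machinery of Section~\ref{SecNonIsom} (abelianisations of the pure interval groups, Theorems~\ref{NonIsoDn} and~\ref{NonIsoEn}) is developed only for type $D_n$ with $n$ even and for $E_n$ with $n\in\{6,7,8\}$; for the remaining types $F_4$, $H_3$, $H_4$ the paper says explicitly at the start of Section~\ref{SecNonIsom} that non-isomorphism ``has been already proven \dots\ in Section~\ref{PraesentIntervalGrp}''. In other words, the $F_4$ case of Theorem~\ref{ThmNonIsom} is a consequence of Theorem~\ref{ThmF4IntervalProper}, not the other way round, so you cannot quote Theorem~\ref{ThmNonIsom} here. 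Fortunately, what you offer as a ``self-contained computational alternative'' is the correct argument and is the paper's own: the abelianisations do not separate the groups (both are $\Z^2$, as you observe), so one compares the two finite presentations with \verb|LowIndexSubgroupsFpGroup| in GAP, exactly as the paper does for $H_3$ and $H_4$. Promote that fallback to the actual proof of the last sentence of the theorem and your argument is complete.
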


\section{Non-isomorphism results}\label{SecNonIsom}

In this section,  we show that  the interval groups $G = G([1,w])$ associated with a proper quasi-Coxeter element $w$ of a Coxeter group of 
type $D_n$ with $n$ even, or of type $E_n$ with $n  \in \{ 6,7, 8\}$, are not isomorphic to the respective Artin groups $A$.
This has been already proven for the remaining finite Coxeter groups (types $H_3, H_4, F_4$) in Section~\ref{PraesentIntervalGrp}.

Our first approach was to compare the abelianisations $G/[G,G]$  and $A/[A,A]$ of the respective groups.
Let $\SS$ be the set of generators of the presentations for the interval groups  of
type $D_n$  or $E_6, E_7$ or $E_8$ given in  Section~\ref{PraesentIntervalGrp}.
The related Carter diagrams are connected, and if $\ss,\tt \in \SS$ correspond to neighbours in that diagram, then $\ss \tt \ss = \tt \ss \tt$ and  $\ss \tt \ss^{-1} = \tt^{-1} \ss \tt$ . Therefore, the commutator subgroup 
$G^\prime$ of $G$ contains the elements $\ss \tt \ss^{-1} \tt^{-1} = \tt^{-1} \ss \tt \tt^{-1} =  \tt^{-1} \ss$.
From this it follows that all the elements in $\SS$ are equal in the abelianisation.  Moreover, the
twisted cycle commutator relations still hold  if we identify all the elements in $\SS$.
This  shows that the abelianisations of  the Artin groups as well  as of the interval groups
are isomorphic to $\Z$. (This also shows  that the interval groups are infinite groups.) 

Therefore, we choose the new approach to consider the abelianisations of the pure
Artin and the pure interval groups.
Our strategy is as follows. Let $\Gamma$ be the Coxeter diagram of the Coxeter group $W$. Let $\varphi : A(\Gamma) \longrightarrow W(\Gamma) $ be a homomorphism from the Artin group $A = A(\Gamma)$ to the Coxeter group $W = W(\Gamma)$ such that the kernel $\ker(\varphi)$ is the pure Artin group $\PA(\Gamma)$.  We call this the canonical epimorphism from $A$ to $W$.
Tits proved that
the abelianisation of  $\PA(\Gamma)$ is isomorphic to the free abelian group of rank
$|T|$ (see \cite{Tits}).  

Note that the  epimorphism $\varphi$ sends  the element $\ss \in \SS$ to the respective reflection in  $W$.  
By Theorem~\ref{ThmCameron} there is also such an epimorphism from $G$ to 
$W$, whose kernel is denoted by $K$ and we call it the pure interval group. We show that 
the abelianisation of $K$ is of rank at most $|T|-2$.
Thereby we obtain a contradiction for  the types $D_n$ with $n$ even, or $E_6, E_7, E_8$ by applying the following result of Cohen and Paris \cite{CP}.

\begin{theorem}\label{CohenParis}
Let $A(\Gamma)$ be an Artin group of type $D_n$ with $n$ even, or $E_n$ with 
$n \in \{ 6,7, 8\}$.  Then the canonical epimorphism is the unique epimorphism from 
$A(\Gamma)  $ to $W (\Gamma) $ up to automorphisms of $W(\Gamma)$. 

In the case $D_n$ with $n$ odd, there are three epimorphisms  from 
$A(\Gamma)  $ to $W (\Gamma) $ up to automorphisms of $W(\Gamma)$.
\end{theorem}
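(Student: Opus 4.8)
The statement concerns epimorphisms of groups, so the plan is to classify, up to post-composition with an automorphism of $W = W(\Gamma)$, all surjective homomorphisms $\phi \colon A(\Gamma) \to W$, and to show that the canonical one is the only such orbit in the cases $D_n$ with $n$ even and $E_n$ for $n \in \{6,7,8\}$, whereas three orbits occur for $D_n$ with $n$ odd. Since $A(\Gamma)$ is generated by the standard generators $\ss_1,\dots,\ss_\ell$ subject to the braid relations $R(\Gamma)$, such a $\phi$ is determined by the tuple $(\phi(\ss_1),\dots,\phi(\ss_\ell))$ of elements of $W$ satisfying these relations, and the problem becomes the enumeration of admissible tuples modulo $\mathrm{Aut}(W)$.

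First I would carry out the abelianisation reduction already visible in the paragraph preceding the statement. As $\Gamma$ is connected and simply laced, adjacent generators are conjugate, so $A(\Gamma)^{\mathrm{ab}} \cong \Z$ while $W^{\mathrm{ab}} \cong \Z/2\Z$, with every simple reflection mapping to the nontrivial class. Surjectivity of $\phi$ forces the composite $A(\Gamma) \to W \to W^{\mathrm{ab}}$ to be onto; since it factors through $A(\Gamma)^{\mathrm{ab}} \cong \Z$, each $\phi(\ss_i)$ must land in the nontrivial class, i.e. be an element of determinant $-1$ (an \emph{odd} element) in the reflection representation. This rules out the possibility that all the $\phi(\ss_i)$ lie in the rotation subgroup $W^{+}$, and reduces the task to classifying tuples of odd elements that satisfy $R(\Gamma)$ and generate $W$.

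The heart of the argument is then a local-to-global analysis of the braid relations. For each edge $ij$ the relation $\phi(\ss_i)\phi(\ss_j)\phi(\ss_i) = \phi(\ss_j)\phi(\ss_i)\phi(\ss_j)$ strongly constrains the pair of odd elements $(\phi(\ss_i),\phi(\ss_j))$; I would first classify such braiding pairs in $W$ and then propagate the constraints along $\Gamma$, ideally by induction on the rank using the standard parabolic Artin subgroups $A(\Gamma_J)$, once their images in $W$ have been identified with suitable parabolic reflection subgroups. The even/odd dichotomy in type $D_n$ is governed by the realisation of $W(D_n)$ as the group of signed permutations with an even number of sign changes: the central involution $-\mathrm{id}$ lies in $W(D_n)$ exactly when $n$ is even, so the longest element $w_0$ is central precisely then. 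When $w_0$ is central — as for $D_n$ with $n$ even, and likewise in types $E_7$ and $E_8$ — this rigidifies the admissible tuples and forces the canonical orbit; for $E_6$ and for $D_n$ with $n$ odd, where $-\mathrm{id} \notin W$, one must instead enumerate the odd generating tuples directly, obtaining a single orbit in type $E_6$ but, for $D_n$ with $n$ odd, two additional non-canonical families and hence three epimorphisms in total.

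The step I expect to be the main obstacle is exactly this classification of odd generating tuples satisfying $R(\Gamma)$ modulo $\mathrm{Aut}(W)$: controlling the non-reflection images of the generators, and either excluding the exceptional configurations or enumerating the finitely many that survive, is a delicate computation — reasonably tractable in the signed-permutation model for $D_n$, but genuinely intricate in the exceptional Weyl groups $W(E_n)$ — and it is precisely here that the quantitative content, one orbit versus three, is produced.
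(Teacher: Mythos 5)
The first thing to note is that the paper itself does not prove this statement: it is imported verbatim as a theorem of Cohen and Paris \cite{CP} and applied as a black box in the proofs of Theorems~\ref{NonIsoDn} and \ref{NonIsoEn}. So your attempt has to stand as a self-contained proof of the classification, and it does not. The two reductions you actually complete --- that an epimorphism $\phi$ is determined by the images of the standard generators subject to the braid relations, and that connectedness of $\Gamma$ together with $W^{\mathrm{ab}}\cong\Z/2\Z$ forces every generator image to be odd --- are correct but are the opening lines of any such argument. Everything that carries the content of the theorem is deferred: the classification of braiding pairs of odd elements, the propagation of that local data along $\Gamma$, and above all the derivation of the counts. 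The conclusion (one orbit for $D_n$ with $n$ even and for $E_6,E_7,E_8$; three orbits for $D_n$ with $n$ odd) is asserted as the expected outcome of an enumeration that is never performed; in particular, the two non-canonical families in the odd case are neither constructed nor shown to exhaust the possibilities.

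Moreover, the one structural mechanism you do propose --- that centrality of $w_0$ (equivalently $-\mathrm{id}\in W$) ``rigidifies'' the admissible tuples --- is both unsupported and demonstrably not the right dividing line: $w_0$ is non-central in $W(E_6)$, where uniqueness nevertheless holds, and non-central in $W(D_n)$ for $n$ odd, where it fails; so this invariant cannot separate the two behaviours, and no argument is offered for why centrality would force rigidity in the cases where it does hold. Relatedly, your inductive step, which identifies images of parabolic Artin subgroups with ``suitable parabolic reflection subgroups'', assumes exactly what is in question: the exceptional epimorphisms responsible for the count of three do not send generators to reflections at all. Already in rank $3$ (where $D_3=A_3$, so $A(\Gamma)\cong B_4$ and $W\cong S_4$), there are epimorphisms $B_4\twoheadrightarrow S_4$ sending the generators to $4$-cycles, for instance $\sigma_1,\sigma_3\mapsto(1\,2\,3\,4)$ and $\sigma_2\mapsto(1\,3\,2\,4)$ (one checks $(ab)a(ab)^{-1}=b$ for this pair, so the braid relations hold, and a $4$-cycle together with the $3$-cycle $ab$ generates $S_4$); these are not equivalent to the canonical epimorphism since automorphisms of $S_4$ preserve cycle type. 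For such homomorphisms the images of parabolic Artin subgroups are not reflection subgroups, so the proposed local-to-global induction breaks down precisely in the situations that produce the number three. A correct proof must confront non-reflection images of the generators head on; your sketch names this as ``the main obstacle'' but offers no way past it, which is the gap.
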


We follow the proof of Tits \cite{Tits} in the calculation of the abelianisation of the kernel $K$.
We first sketch his approach, then we discuss the interval groups of type 
$D_n$ in detail and in the last part the interval groups for $E_6,E_7$ and $E_8$.

\subsection{The abelianisation of the pure  Artin group}\label{SubTitsArtin}

Tits used the following  notation in \cite{Tits}. He took a Coxeter system  $(W,R)$  of type $\Gamma$ with simple system 
$R = \{ r_i ~|~1 \leq i \leq n\}$ and set of reflections $T$ (he named it   $S$).
Let $I = \{1, \ldots , n\}$ and denote by ${\bf I}$ the free group on $I$.  We denote by $R(\Gamma)$ the braid relations determined by the 
Coxeter graph $\Gamma$. 

Define a homomorphism from the free group ${\bf I}$ into the Coxeter group $W$  by
\begin{itemize}
\item $r : {\bf I} \rightarrow W$ by $r(i):=r_i$  with kernel   $L := \ker(r)$. 
\end{itemize}
 Then we have
$W = \langle r_1, \ldots , r_n~|~r_i^2,   i \in I, R(\Gamma) \rangle, ~\mbox{and }~L =
 \ll i^2,  i \in I, R(\Gamma) \gg$ is the normal subgroup of ${\bf I}$ which  is generated by the  elements  $ i^2$ for all $ i \in I$ and $ R(\Gamma) $ of ${\bf I}$.
 Further, let  
\begin{itemize}
\item $N$ be the  normal subgroup  of ${\bf I}$ generated by  $[L,L]$ and $ R(\Gamma) $, and   $V:= {\bf I}/N$. We denote the canonical  epimorphism  from ${\bf I}$ to $V$ by $q$ and we set $q_i:= iN$.
%\item 
\item   Let $f$ be the  epimorphism $f: V \rightarrow W$
defined by $f(q_i) := r_i$, and  $U:= \ker f$.

\end{itemize}
 The homomorphism $f$ is well-defined since $N \subset L$, and we have $f\circ q = r$.  This setting implies the following.
 
\begin{lemma}\label{PropertiesU}
	Let $A= A(\Gamma)$ be the  Artin group of spherical type $\Gamma$. It follows that
	\begin{itemize}
		\item[(a)] $V/U \cong W$, and $U = L/N$;
		\item[(b)]  $U$ is an abelian normal subgroup of $V$;
		\item[(c)]  $U$ is the normal closure in $V$ of the words $q_i^2$  for all  $ i \in I$;
		\item[(d)]  $A= A(\Gamma) = {\bf I}/B$ where $B:= \langle\langle  R(\Gamma) \rangle\rangle \leq N \leq {\bf I}$, and $\PA := \PA(\Gamma) = L/B$;
		\item[(e)] $U$ is isomorphic to the abelianisation of the pure Artin group $\PA$.
	\end{itemize}
\end{lemma}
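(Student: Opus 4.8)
The plan is to verify each of the five assertions (a)--(e) in turn, exploiting the tower of quotients ${\bf I} \twoheadrightarrow V \twoheadrightarrow W$ built from the normal subgroups $B \leq N \leq L \leq {\bf I}$, where $L = \ker(r)$, $N = \langle\langle [L,L], R(\Gamma) \rangle\rangle$, and $B = \langle\langle R(\Gamma) \rangle\rangle$. I would begin with part (a), which is essentially bookkeeping with the isomorphism theorems. The map $f : V \to W$ satisfies $f \circ q = r$, and since $r$ is surjective so is $f$; its kernel is $U = \ker(f)$. Because $f(q_i) = r_i$ and $r = f \circ q$, an element $q(x) \in V$ lies in $U$ precisely when $r(x) = 1$, i.e. when $x \in L$. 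Thus $U = q(L) = L/N$ (using $N \subseteq L$), and $V/U \cong W$ by the first isomorphism theorem.

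For part (b), the key point is that $N$ was defined to contain $[L,L]$. I would argue that $U = L/N$ is abelian because $[L,L] \subseteq N$ forces all commutators of elements of $L$ to become trivial in the quotient $L/N$. Normality of $U$ in $V$ is immediate since $L$ is normal in ${\bf I}$ (it is a kernel) and $N$ is normal in ${\bf I}$ by construction, so $L/N$ is a normal subgroup of ${\bf I}/N = V$. For part (c), I would use the presentation $L = \langle\langle i^2 \ (i \in I),\ R(\Gamma) \rangle\rangle$ as a normal subgroup of ${\bf I}$, stated just above the lemma. Passing to the quotient $V = {\bf I}/N$ kills the $R(\Gamma)$ part of these normal generators (since $R(\Gamma) \subseteq N$), leaving $U = L/N$ as the normal closure in $V$ of the images $q_i^2$ of the elements $i^2$. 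This is the step where one must be slightly careful that no additional relations among the $q_i^2$ are being silently imposed, but since $N$ is generated exactly by $[L,L]$ and $R(\Gamma)$, nothing beyond abelianising the $i^2$ (within $L$) and imposing the braid relations occurs.

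Part (d) identifies the Artin group and pure Artin group concretely: the Artin group $A(\Gamma) = {\bf I}/B$ is the standard presentation with generators $i \in I$ and only the braid relations $R(\Gamma)$, and the pure Artin group $\PA(\Gamma) = \ker(\varphi)$ pulls back to $L/B$ since $\PA = \ker(A \to W)$ and $A \to W$ factors as ${\bf I}/B \to {\bf I}/L = W$, whose kernel is $L/B$. The chain $B \leq N \leq L \leq {\bf I}$ is what makes all these quotients compatible. Finally, part (e) is the payoff and the heart of the matter: I would show $U \cong \PA/[\PA,\PA]$. Since $\PA = L/B$ and $U = L/N$ with $N = \langle\langle [L,L], R(\Gamma) \rangle\rangle = \langle\langle [L,L], B \rangle\rangle$ (as $B = \langle\langle R(\Gamma)\rangle\rangle$), the quotient $U = L/N$ is obtained from $\PA = L/B$ by further quotienting by the image of $[L,L]$. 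The main obstacle — and the one point requiring genuine care rather than formal nonsense — is verifying that the image of $[L,L]$ in $\PA = L/B$ is exactly the commutator subgroup $[\PA,\PA]$, i.e. that $[L,L] \cdot B / B = [L/B, L/B]$; this holds because commutators in the quotient $L/B$ are images of commutators in $L$, so $U = (L/B)/([L,L]B/B) = \PA/[\PA,\PA]$, giving the desired isomorphism with the abelianisation of the pure Artin group. The result of Tits quoted in the preceding paragraph then lets one conclude that this abelianisation is free abelian of rank $|T|$, but that identification is not part of the present lemma.
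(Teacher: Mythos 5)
Your proposal is correct and follows essentially the same route as the paper: parts (a)--(d) are the same bookkeeping with the tower $B \leq N \leq L \leq {\bf I}$ that the paper dispatches as ``following from the definitions,'' and your part (e) is exactly the paper's computation $\PA/[\PA,\PA] = (L/B)/([L,L]B/B) \cong L/([L,L]B) = L/N = U$, with the useful extra observation (left implicit in the paper) that $[L,L]B/B = [L/B,L/B]$ because commutators in a quotient are images of commutators.
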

\begin{proof} Assertions (a) and (b) follow from the definition of $U$,  (c) is a consequence of the definition of $r$, and (d) follows
	from the definitions of the Artin and the pure Artin groups.
From (d), we conclude that $\PA/[\PA, \PA] = (L /B) / (([L,L]B / B) \cong L/([L,L] B) = L/N = U$, which shows that 
$U$ is isomorphic to the abelianisation of the pure Artin group $\PA$, that is (e).
\end{proof}

Tits determined the abelianisaton of the pure Artin group $\PA$ in 
 \cite[Theorem~2.5]{Tits}.
 
\begin{theorem}\label{Tits} (\cite[Theorem~2.5]{Tits})
There is a map $g: T \rightarrow U$ of the set of reflections of $W$ into the kernel $U$ of the map $f: V \rightarrow W$ such that for all $i \in I, t \in T$ and $w \in W$
$$q_i^2 = g(r_i)~\mbox{and}~ g(r_i^w) = g(r_i)^w.$$
These relations determine the map $g$. It is injective, and $U$ is the free abelian group generated by $g(T)$,  and is of rank $|T|$.
\end{theorem}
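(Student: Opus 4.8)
The plan is to exploit that $U$ is an abelian normal subgroup of $V$ with $V/U\cong W$ (Lemma~\ref{PropertiesU}(a),(b)), so that conjugation in $V$ makes $U$ a $\Z W$-module: for $u\in U$ and $w\in W$ the element $u^{w}:=u^{\tilde w}$ is well defined independently of the lift $\tilde w\in V$, precisely because $U$ is abelian. Each $q_i^{2}$ lies in $U$ since $f(q_i^{2})=r_i^{2}=1$, and by Lemma~\ref{PropertiesU}(c) the $q_i^{2}$ generate $U$ as a normal subgroup of $V$, which since $U$ is abelian is the same as the $\Z W$-submodule they generate; hence $U$ is generated as an abelian group by the conjugates $(q_i^{2})^{w}$, $i\in I$, $w\in W$. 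I would then define $g$ on the simple reflections by $g(r_i):=q_i^{2}$ and extend it to all of $T$ by the rule $g(r_i^{w}):=(q_i^{2})^{w}$; since every reflection is $W$-conjugate to a simple one and $g(T)$ generates $U$, these two requirements force $g$ and prove surjectivity onto $U$. What remains for this half is the \emph{well-definedness} of $g$: whenever $r_i^{w}=r_j^{w'}$ one must have $(q_i^{2})^{w}=(q_j^{2})^{w'}$.

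Well-definedness is the crux, and it splits into two verifications. First, the fibres of the map $w\mapsto r_i^{w}$ are the right cosets of $C_W(r_i)$, so constancy along a fibre is equivalent to the statement that $C_W(r_i)$ fixes $q_i^{2}$ in $U$, i.e. $(q_i^{2})^{c}=q_i^{2}$ for all $c\in C_W(r_i)$. Second, when $r_i^{w}=r_j$ with $r_i,r_j$ both simple one needs $(q_i^{2})^{w}=q_j^{2}$. Both reduce to computations inside $V$ using only the braid relations $R(\Gamma)$, which hold there by construction. The model case is $m_{ij}=3$: from $q_iq_jq_i=q_jq_iq_j$ one obtains, after cancelling $q_i$ on the left, the identity $(q_i^{2})^{q_jq_i}=q_j^{2}$, matching $r_i^{r_jr_i}=r_j$; commuting generators ($m_{ij}=2$) fix $q_i^{2}$, and $q_i$ itself fixes $q_i^{2}$. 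The general statement then follows by an induction on $m_{ij}$ in the odd case, together with a generating set for the conjugators between simple reflections and for $C_W(r_i)$, each generator being handled by the same kind of braid computation. This is the step I expect to be the main obstacle, since it requires controlling all conjugators — in particular the full centraliser $C_W(r_i)$ — rather than one convenient one.

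For the remaining half, injectivity of $g$ and freeness, I would produce a separating homomorphism combinatorially. For each reflection $t$ define $\lambda_t\colon L\to\Z$ by counting, with sign, the crossings of the wall $H_t$ along the path of partial products of a word: for $x=i_1^{\varepsilon_1}\cdots i_k^{\varepsilon_k}\in L$ with partial images $w_0=1,\,w_\ell=r_{i_1}\cdots r_{i_\ell}$, set $\lambda_t(x)=\sum_{\ell}\varepsilon_\ell\,[\,w_{\ell-1}r_{i_\ell}w_{\ell-1}^{-1}=t\,]$. Since $\bar x=1$ for $x\in L$, this is a homomorphism, and assembling the $\lambda_t$ gives a $W$-equivariant map $\Lambda\colon L\to\Z[T]$ (the walls $H_t$ being permuted by the conjugation action of $W$). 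The key Coxeter fact is that the reflection sequence of a reduced word enumerates its inversion set, which depends only on the element; as the braid relator $\beta_{ij}$ is a product of a reduced word for the longest element of the rank-two parabolic $\langle r_i,r_j\rangle$ with the inverse of another such reduced word, each wall is crossed once with each sign, so $\lambda_t(\beta_{ij})=0$ for every $t$. By equivariance, killing the $\beta_{ij}$ kills all of $N$, so $\Lambda$ descends to $\psi\colon U\to\Z[T]$ with $\psi(q_i^{2})=2e_{r_i}$, whence $\psi(g(t))=(2e_{r_i})^{w}=2e_{t}$ for $t=r_i^{w}$. Because the $2e_t$ are $\Z$-independent, so are the $g(t)$; combined with the surjection $\Z[T]\twoheadrightarrow U$, $e_t\mapsto g(t)$, coming from the first half, this forces $U\cong\Z[T]$, free abelian of rank $|T|$, with $g$ injective and $g(T)$ a basis.
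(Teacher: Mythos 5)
The paper itself gives no proof of Theorem~\ref{Tits}: it is quoted from \cite[Theorem~2.5]{Tits}, so your attempt must be judged on its own terms. Its architecture is right, and its second half is essentially correct and complete: your wall-crossing maps $\lambda_t$ are exactly Tits' reflection cocycle (equivalently, the homomorphism $V\to\Z[T]\rtimes W$, $q_i\mapsto(e_{r_i},r_i)$, whose braid relations hold because a reduced word for the longest element of a rank-two standard parabolic crosses each wall of that parabolic exactly once). It kills $[L,L]$ and the braid relators, hence descends to $\psi\colon U\to\Z[T]$ with $\psi(q_i^2)=2e_{r_i}$, and your closing argument --- the composite of $e_t\mapsto g(t)$ with $\psi$ is multiplication by $2$, so both maps are injective and $U\cong\Z[T]$ --- is valid \emph{provided $g$ exists}.

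The genuine gap is the one you flag yourself: the well-definedness of $g$, i.e.\ the statement that $C_W(r_i)$ centralises $q_i^2$ in $V$ (the compatibility between conjugate simple generators along odd-labelled edges is the easier part). What you offer there is a plan, not a proof: ``a generating set \dots for $C_W(r_i)$, each generator being handled by the same kind of braid computation.'' This is where the entire content of Tits' theorem is concentrated. Even granting Steinberg's theorem, so that $C_W(r_i)$ is generated by $r_i$ together with the reflections $t$ whose roots are orthogonal to $\alpha_i$, the required identity $[\tilde t,q_i^2]=1$ in $V$ is not an instance of a single braid relation: the subgroup $\langle r_i,t\rangle$ need not be a parabolic subgroup (take the two commuting reflections of $W(B_2)$), and reducing a general commuting pair to a standard configuration by conjugation presupposes precisely the $W$-equivariance of $w\mapsto (q_i^2)^w$ that you are trying to establish, so a genuine induction is needed. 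The missing statement is exactly \cite[Proposition~2.1(3)]{Tits}, which the present paper invokes as a black box in the second proof of Lemma~\ref{ImportantFact}(b); and Lemma~\ref{ImportantFact}(b), which is the analogue of this single step for the interval groups of type $D_n$ alone, is proved in the paper only by a nontrivial explicit computation with a concrete reflection generating set of $C_W(s_n)$. Until this step is supplied, $g$ is undefined, and both the surjection $\Z[T]\to U$ and the identity $\psi(g(t))=2e_t$ in your final paragraph have no meaning; everything after that point is conditional on this lemma.
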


\subsection{The abelianisation of the pure interval group of type $D_n$}\label{SubNonIsomDn}

Now we consider  the interval group  $G := G_{m,n}$ related to the Coxeter group of type $D_n$, where $m > 1$. We adapt Tits notation, construction and arguments for Artin groups of spherical type
to  the group $G$ in order to prove an upper bound of the rank  of the abelianisation $K/[K,K]$ of the ``pure interval group'' $K$ of $G$ (see the definition 
below).  We keep the definitions of $I$ and ${\bf I}$, and let   $S = \{s_1, \ldots , s_n\}$ be the set of $n$ generators of  $W$ that correspond to the vertices of $\Delta_{m,n}$. Define a homomorphism 

\begin{itemize}
	\item $s: {\bf I} \rightarrow W$ by $s(i):= s_i$, and  let $L := \ker (s)$.
%	\item define $\ss : { \bf I} \rightarrow G_{m,n}$ by $\ss(i) := \ss_i$, and $B:= \ker(\ss)$. 
\end{itemize}
Then  we have 
$$W = \langle s_1, \ldots s_n ~|~s_i^2, i \in I,  R_{m,n},   \tc{s_1} {s_{m}}{s_{m+1}}{s_{m+2}} \rangle,$$
where $(s_1, s_{m}, s_{m+1}, s_{m+2})$  is  the unique $4$-cycle  in $\Delta_{m,n}$.
Let

\begin{itemize}
	\item $N =  \ll [L,L], R_{m,n}, \tc{1}{m}{m+1}{m+2} \gg \unlhd   ~  {\bf I}$ and $V:=V_{m,n}:=  {\bf I}/N$. We denote again  by $q$  the homomorphisms $q:{\bf I} \rightarrow V$ and set $q_i:= q(i)$.  
	\item Let $f$ be the epimorphism $f: V \rightarrow W$ defined by $f(q_i) = r_i$, and $U := \ker f$. 
\end{itemize}
%Further we have , and by construction $G \rightarrow V,  \ss_i \mapsto q_i$ is a homomorphism.
We get properties analogous to those for the Artin groups in  Lemma~\ref{PropertiesU}:

\begin{lemma}\label{Basics}
	Let $G := G_{m,n}$ be the interval group of type $\Delta_{m,n}$.
It  follows that
	\begin{itemize}
		\item[(a)] $V/U \cong W$, and $U = L/N$;
		\item[(b)] $U$ is an abelian normal subgroup of $V$;
		\item[(c)]  $U$ is the normal closure of the words $q_i^2$ in $V$, where $ i \in I$;
		\item[(d)]  $G = {\bf I}/B$ where $B := \ll  R_{m,n}, \tc{1}{m}{m+1}{m+2} \gg \leq N \leq{\bf I}$ , and  $K:= L/B$ is the kernel of the map from $G$ to $W$ sending
		$\ss_i$ onto $s_i$;
		\item[(e)] $U$  is isomorphic to the abelianisation of $K$.
	\end{itemize}
\end{lemma}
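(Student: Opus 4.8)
The plan is to transport the argument used for Lemma~\ref{PropertiesU} essentially verbatim, since the present setting differs from the Artin case only by the presence of the single twisted cycle commutator relator $\tc{1}{m}{m+1}{m+2}$ in the defining data of $N$ and $B$. The one new point to record at the outset is that this relator lies in $L = \ker(s)$: the presentation $W = \langle s_1, \dots, s_n \mid s_i^2,\, R_{m,n},\, \tc{s_1}{s_m}{s_{m+1}}{s_{m+2}} \rangle$ is valid because, modulo the quadratic relations $s_i^2 = 1$, each generator is its own inverse, so the twisted cycle commutator collapses to the ordinary cycle commutator $\cc{s_1}{s_m}{s_{m+1}}{s_{m+2}}$ and the presentation is the one furnished by Theorem~\ref{ThmCameron}. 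Consequently $R_{m,n}$ and $\tc{1}{m}{m+1}{m+2}$ all evaluate to the identity in $W$, i.e. they lie in $L$, whence $N \subseteq L$ (using also $[L,L] \subseteq L$). I would also record once and for all that $[L,L]$ is characteristic in $L$, hence normal in ${\bf I}$, which is what makes the products of normal subgroups below behave well.

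For (a)--(c) I would argue exactly as in Lemma~\ref{PropertiesU}. Since $f \circ q = s$ by construction, $f$ is a well-defined epimorphism onto $W$, so $V/U \cong W$; and $q^{-1}(U) = \ker(f \circ q) = \ker(s) = L$, so $U = q(L) = L/N$, giving (a). For (b), $U$ is normal as the kernel of $f$, and it is abelian because $N \supseteq [L,L]$ forces $U = L/N$ to be a quotient of the abelian group $L/[L,L]$. For (c), $L$ is the normal closure in ${\bf I}$ of $\{i^2 : i \in I\} \cup R_{m,n} \cup \{\tc{1}{m}{m+1}{m+2}\}$ by the presentation of $W$; passing to $V = {\bf I}/N$ kills the images of $R_{m,n}$ and of $\tc{1}{m}{m+1}{m+2}$, so $U = q(L)$ is the normal closure in $V$ of the surviving generators $q_i^2$.

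For (d) I would invoke Theorem~\ref{ThmPresDn}, which presents $G = G_{m,n}$ on the generators $\ss_1, \dots, \ss_n$ subject to $R_{m,n}$ and $\tc{\ss_1}{\ss_m}{\ss_{m+1}}{\ss_{m+2}}$; realising $G$ as a quotient of the free group ${\bf I}$ identifies $G = {\bf I}/B$ with $B = \ll R_{m,n},\, \tc{1}{m}{m+1}{m+2} \gg$. Since $R_{m,n}$ and $\tc{1}{m}{m+1}{m+2}$ lie in both $L$ and $N$, we get $B \subseteq L$ and $B \subseteq N$. The projection ${\bf I}/B \to {\bf I}/L = W$ sends $\ss_i \mapsto s_i$ and has kernel $L/B$, which is therefore the pure interval group $K$. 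Finally, for (e) I would compute the abelianisation of $K = L/B$ as in the Artin case: $[K,K] = [L,L]B/B$, so $K/[K,K] \cong L/([L,L]B)$. The key identity is $[L,L]B = N$: both $[L,L]$ and $B$ are normal in ${\bf I}$ (the former by the remark above, the latter by definition), so their product is a normal subgroup of ${\bf I}$ that contains every normal generator of $N$ and is contained in $N$, whence equality. Therefore $K/[K,K] \cong L/N = U$, which is (e).

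I do not expect a genuine obstacle here; the proof is a faithful adaptation of Lemma~\ref{PropertiesU}. The only places demanding care are the bookkeeping around normal closures---specifically the identity $[L,L]B = N$, which hinges on $[L,L]$ being normal in ${\bf I}$ and not merely in $L$---and the verification that the twisted cycle commutator is genuinely a relation of $W$, so that $N \subseteq L$ and $B \subseteq L$ hold. Both become immediate once one observes that twisted and untwisted cycle commutators agree modulo the Coxeter involution relations.
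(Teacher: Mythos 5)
Your proof is correct and follows essentially the same route as the paper's: both transplant Tits' argument for Lemma~\ref{PropertiesU} to the interval-group setting, citing Theorem~\ref{ThmPresDn} for the presentation $G = {\bf I}/B$ and the Cameron-type presentation of $W$ (twisted and untwisted cycle commutators agreeing modulo the involution relations) to get $N, B \subseteq L$. The only cosmetic difference is that in (c) and (e) you spell out the normal-closure bookkeeping ($U = q(L)$ is the normal closure of the surviving generators $q_i^2$, and $[L,L]B = N$) that the paper compresses into an appeal to \cite{Cameron} and the phrase ``immediate from the definitions.''
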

\begin{proof}  The proofs of (a), (b) are identical to the proofs of Lemma~\ref{PropertiesU}(a), (b). The first part of assertion (d) is a consequence of  \cite[Theorem~A]{BaumNeaRees} and 
	the second part of the facts that $I/B \cong G$ and $I/L \cong W$ by (1) and the first part of (d).

To prove (c),
	let  $X$ be the normal closure of $q_i^2 \in V$ for  $1 \leq i \leq n$.  Clearly $X \subseteq L/N$. Observe that 
	$q_i X$ satisfies the relations  of the presentation for $W$ by \cite{Cameron}.  This shows $L/N \subseteq X$,
	and equality now follows. Statement (e) is immediate from the definitions of $L,N$ and $B$.
\end{proof}

In the following, we assume  that $n > 4$. This implies, as $m \leq n/2$, that $m +2 < n$. 
The next lemma is an important fact which holds in $A(D_n)$ (see \cite{Tits}), but also  in $G_{m,n}$ with $m >1$.

\begin{lemma}\label{ImportantFact}
We have that 
	\begin{itemize}
		\item[(a)] There is an  action of $W$ on $U$ given by  $w(u) = v(u)$
		 for $u \in U$ and $w \in W$,  where $v \in V$ is  any element 
		 such that  $f(v) = w$.
		
		\item[(b)]  $C_W(q_n^2) \geq C_W(s_n)$.
	\end{itemize}
\end{lemma}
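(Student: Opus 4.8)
The plan is to prove the two parts separately: (a) is formal, while (b) carries the real content. For (a), I would argue from Lemma~\ref{Basics}. By part (b) of that lemma $U$ is an abelian normal subgroup of $V$, so conjugation gives a homomorphism $V \to \mathrm{Aut}(U)$, $v \mapsto (u \mapsto vuv^{-1})$. Since $U$ is abelian, every element of $U$ acts trivially, so this homomorphism factors through $V/U \cong W$ (Lemma~\ref{Basics}(a)). Explicitly, if $f(v) = f(v')$ then $v' = vu_0$ with $u_0 \in U$, and $v'uv'^{-1} = vu_0uu_0^{-1}v^{-1} = vuv^{-1}$ because $u_0$ and $u$ commute; hence $w(u) := vuv^{-1}$ is independent of the chosen lift $v$ of $w$ and defines an action of $W$ on $U$.

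For (b), note first that $q_n^2 \in U$ since $f(q_n^2) = s_n^2 = 1$, and that $s_n \in C_W(q_n^2)$ because, choosing $v = q_n$, we get $s_n(q_n^2) = q_nq_n^2q_n^{-1} = q_n^2$. To capture all of $C_W(s_n)$ I would use that the centralizer of a reflection in a finite Coxeter group is a reflection subgroup: by Steinberg's fixed-point theorem $\mathrm{Stab}_W(\alpha_n)$ is generated by the reflections fixing the root $\alpha_n$ of $s_n$, namely those whose root is orthogonal to $\alpha_n$, so $C_W(s_n) = \langle s_n\rangle \times \mathrm{Stab}_W(\alpha_n)$ is generated by $s_n$ together with the reflections commuting with it. It therefore suffices to show that every reflection $t$ with $ts_n = s_nt$ fixes $q_n^2$. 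When $t$ is one of the diagram generators $s_i$, commuting with $s_n$ means $s_i$ is non-adjacent to $s_n$ in $\Delta_{m,n}$, so the relation $q_iq_n = q_nq_i$ lies in $R_{m,n}$; then $q_i$ commutes with $q_n^2$ and $s_i(q_n^2) = q_n^2$.

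The main obstacle is that these commuting generators do not suffice to generate $C_W(s_n)$. Indeed, since $s_n$ is adjacent only to $s_{n-1}$ in $\Delta_{m,n}$ (using $n>4$ and $m+2<n$), the commuting generators are exactly $s_1,\dots,s_{n-2}$; together with $s_n$ these give only $n-1$ reflections, whereas $C_W(s_n)$ has rank $n$ for $W$ of type $D_n$, its orthogonal part $\mathrm{Stab}_W(\alpha_n)$ being of type $D_{n-2}\times A_1$. Hence at least one reflection $t_0\in C_W(s_n)$, spanning the residual $A_1$ factor that arises from the ``fork''/$D$-type structure of $W$ encoded by the $4$-cycle of $\Delta_{m,n}$, is not a diagram generator. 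To finish I would write $t_0$ explicitly as a word in the $q_i$ and verify, from the braid relations $R_{m,n}$ together with the twisted cycle commutator relator $\tc{s_1}{s_m}{s_{m+1}}{s_{m+2}}$ defining $V$, that a lift of $t_0$ commutes with $q_n^2$; the hypothesis $m+2<n$ keeps $s_n$ and the block $s_1,\dots,s_{n-2}$ clear of the cycle and so controls this computation. Confirming this last commutation—equivalently, that passing from the ambient free abelian structure of Tits (Theorem~\ref{Tits}, where the analogous fact holds in $A(D_n)$) to the interval quotient $V$ does not shrink the stabiliser of $q_n^2$ below $C_W(s_n)$—is the heart of the argument.
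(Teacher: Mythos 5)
Your part (a) is correct and matches the paper's (terse) argument, and your setup for part (b) --- that $q_n^2\in U$, that $s_n$ and the lifts $q_1,\dots,q_{n-2}$ of the non-adjacent generators visibly centralise $q_n^2$, and that everything therefore hinges on one extra reflection $t_0$ (the reflection in the root $e_n+e_{n-1}$) --- is also correct and is exactly how the paper's first proof is organised. But your proposal stops precisely at the decisive step: you never produce the word expressing $t_0$ in the generators, nor derive from the defining relations of $V$ that a lift of $t_0$ centralises $q_n^2$; you yourself label this ``the heart of the argument''. This is a genuine gap, not a routine verification: $V$ is given by generators and relations, so there is no soft reason why such a commutation should hold, and it must be exhibited as a consequence of the relations. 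The paper closes the gap by taking $s = s_1^{s_{m+2}s_{m+3}\cdots s_n s_{m+1}\cdots s_{n-1}}$ (a conjugate of $s_1$, whose explicit conjugating word is where the diagram structure and the hypothesis $m+2<n$ enter) and computing $(q_n^2)^s = q_n^2$ using exactly two tools: (i) the braid relation $q_jq_kq_j=q_kq_jq_k$ implies $q_j^{-1}q_k^2q_j = q_kq_j^2q_k^{-1}$, and (ii) the squares $q_i^2$ all commute with one another, since they lie in the abelian group $U$ (Lemma~\ref{Basics}(b),(c)). Note that, contrary to what you suggest, the twisted cycle commutator relator plays no direct role in this calculation.

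Your closing idea of ``passing from the ambient free abelian structure of Tits for $A(D_n)$ to the interval quotient $V$'' cannot be made rigorous as stated: $V=V_{m,n}$ is presented on the Carter diagram $\Delta_{m,n}$, not on the $D_n$ Coxeter diagram, so it is not a quotient of Tits' group for $A(D_n)$, and Theorem~\ref{Tits} by itself transfers nothing. The paper's second proof shows the legitimate way to borrow Tits' result: restrict to the parabolic subgroup $P_e = \langle s_1, s_{m+1},\dots,s_n\rangle \cong W(D_{n-m})$, for which $\{s_1,s_{m+1},\dots,s_n\}$ is a simple system and $w_e = s_1s_{m+1}\cdots s_n$ is a parabolic Coxeter element dividing $w$; there \cite[Proposition~2.1(3)]{Tits} gives $C_{P_e}(q_n^2)\geq C_{P_e}(s_n)$, and combining this with $P=\langle s_1,\dots,s_{n-2},s_n\rangle$, whose lifts commute with $q_n$, yields $C_W(q_n^2)\geq \langle P, C_{P_e}(s_n)\rangle = C_W(s_n)$. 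Either of these two completions (the explicit calculation or the parabolic reduction) is needed to turn your outline into a proof.
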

\begin{proof} 
The action of $W$ on $U$ defined in (a) is well-defined  as  $U = \ker(f)$ and as $U$ is abelian. 
	It remains to prove (b).

	As $n \geq 5$, the element $q_n$ commutes with $q_i$ for $1 \leq i \leq n-2$.  
	In our notation, $s_n$ corresponds to the root $e_n - e_{n-1}$. Let 
	$$s := s_1^{s_{m+2} s_{m+3} \cdots s_ns_{m+1} \cdots s_{n-1}} \in W.$$
	Then $s$ corresponds to the root $e_n + e_{n-1}$ and $C_W(s_n) = \langle s_1, \ldots , s_{n-2}, s_n, s \rangle$.
	
	We note the following two properties.
	The first is an elementary calculation, and the second follows from Lemma~\ref{Basics} (c).
	\begin{itemize}
		\item[(i)] The braid relation $q_jq_kq_j=q_kq_jq_k$ implies $q_j^{-1} q_k^2 q_j = q_k q_j^2 q_k^{-1}$;
		\item[(ii)] $q_j^{-2} q_k^2 q_j^2 = q_k^2 $.
	\end{itemize}
	
	We apply these to derive $(q_n^2)^s = q_n^2$ by a  direct calculation.
	This then implies $C_W(q_n^2) \geq C_W(s_n)$.
\end{proof}

We give a second proof of Lemma~\ref{ImportantFact} (b), which does not use  any calculations. We will also apply the  strategy of the second proof to the exceptional groups.
\noindent\\
\begin{proof}
	%\begin{remark}\label{CentralizerwInG}

        We consider the two following  parabolic subgroups of $W$, the subgroups
        $$P:= \langle s_i, s_n ~|~ 1 \leq i \leq n-2 \rangle \cong  W(D_{n-2}) \times \Z_2~\mbox{and}$$
        $$P_e:=  \langle s_1, s_{m+1}, \ldots , s_n \rangle  \cong W(D_{n-m}), ~ (\mbox{see Figure~\ref{FigureCarterDiagramDn}}).$$
        Then  $\{s_1, s_{m+1}, \ldots , s_n\}$  is a simple system  in $P_e$, and
   $w_e:= s_1s_{m+1} \cdots s_n$  is  a parabolic Coxeter element in $W$, as well as   a prefix of $w$.
	It follows from \cite[Proposition~2.1(3)]{Tits}  
	       that $C_{P_e}(q^2_n) \geq  C_{P_e}(s_n) \cong W(D_{n-m-2} ) \times \Z_2^2$.  Moreover, as $q_i$ and $q_n$ commute for $1 \leq i \leq n-2$, the parabolic subgroup  $P $ fixes  $q_n^2$, from which we derive that
        $ \langle C_{P_e}(s_n) , P \rangle$ is a subgroup of $C_W(q_n^2)$. Now the claim follows from the fact that 
        $ \langle C_{P_e}(s_n) , P \rangle =  C_W(s_n)$ (which is isomorphic to $ W(D_{n-2} ) \times \Z_2^2$).
%	 which  proves Lemma~\ref{ImportantFact} (b).
%	that 
%	$C_W(q^2_n)  = C_W(s_n) \cong W(D_{n-2}) \times \Z_2^2$.
\end{proof}

\begin{cor}\label{Coro1}
	Let  $n \geq 5$. 
	Then the following hold:
	\begin{itemize}
		\item[(a)]  $q^2_1, \ldots , q^2_n$ are conjugate in $V$;
		%     \item[(b)]  If $s,t \in W$ such that $s_i^s = s_i^t$, then $(q^2_i)^s = (q^2_i)^t$;
		\item[(b)]   ${\cal T}:= (q_i^2)^V = (q_k^2)^V$ for all $i,k \in I$;
	%	\item[(c)]   $| \widehat{T}^2 |  = | T |/a$ where $a = |C_W(q_n^2) : C_W(s_n)|  
	%	\in \{1, 2, |T|\}$;
	\end{itemize}
\end{cor}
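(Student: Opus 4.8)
The plan is to read off both assertions from the connectedness of the Carter diagram $\Delta_{m,n}$ and the braid relations that are already built into $V$. Recall from Lemma~\ref{Basics}(c) that $U=\ker f$ is the normal closure in $V$ of the elements $q_i^2$, and from Lemma~\ref{Basics}(b) that $U$ is abelian. In particular each $q_i^2$ lies in the abelian normal subgroup $U$, so conjugation of these elements by $V$ factors through $V/U\cong W$; thus the orbit $(q_i^2)^V$ is nothing but the $W$-orbit of $q_i^2$ under the action of Lemma~\ref{ImportantFact}(a). The whole corollary will therefore reduce to showing that the squares $q_1^2,\dots,q_n^2$ all lie in a single $V$-conjugacy class.

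For (a) the key local step is a one-line braid computation. Whenever two vertices $i,j$ of $\Delta_{m,n}$ are joined by an edge, the braid relation $q_iq_jq_i=q_jq_iq_j$ holds in $V$, since it belongs to $R_{m,n}$ and hence to $N$. Setting $c:=q_iq_j$ and using this relation gives $cq_ic^{-1}=q_iq_jq_iq_j^{-1}q_i^{-1}=q_jq_iq_jq_j^{-1}q_i^{-1}=q_j$, and squaring yields $cq_i^2c^{-1}=q_j^2$. Hence adjacent generators have conjugate squares in $V$.

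To finish (a) I would invoke connectedness: the Carter diagram $\Delta_{m,n}$ of type $D_n$ (Figure~\ref{FigureCarterDiagramDn}) is connected, so any two indices $i,k$ are linked by a path of edges, and conjugacy is transitive; applying the local step along such a path shows $q_i^2$ and $q_k^2$ are conjugate in $V$ for all $i,k$. In particular $q_1^2,\dots,q_n^2$ are pairwise conjugate. Assertion (b) is then immediate: since $q_i^2$ and $q_k^2$ are conjugate in $V$, they have the same $V$-conjugacy orbit, so $(q_i^2)^V=(q_k^2)^V$, and we may call this common set ${\cal T}$.

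I do not expect a genuine obstacle here; the content is entirely carried by the braid-relation identity and the connectedness of $\Delta_{m,n}$. The only points warranting care are verifying that the braid relations are indeed imposed in $V$ (they are, being part of $R_{m,n}\subseteq N$) and confirming that every index can be reached from every other through single-bond edges, which holds because $\Delta_{m,n}$ is simply laced and connected. Note that, unlike Lemma~\ref{ImportantFact}, this argument never uses the precise centraliser structure, so the hypothesis $n\geq 5$ enters only to keep us in the regime where $V=V_{m,n}$ and the preceding lemmas apply.
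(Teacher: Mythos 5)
Your proof is correct and follows essentially the same route as the paper, which deduces (a) directly from the connectivity of the Carter diagram and then (b) from (a); your explicit braid-relation computation $c\,q_i^2\,c^{-1}=q_j^2$ with $c=q_iq_j$ simply fills in the local step the paper leaves implicit.
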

\begin{proof} Assertion (a) is a consequence of the connectivity of the Carter diagram for $w$.
	Assertion (b) follows from (a).
%	 and we obtain $| \widehat{T}^2 |  = |W : C_W(q^2_n)|  =   |W : C_W(s_n)|/ |C_W(q^2_n) : C_W(s_n)|  = 
%	|T|/a$, where $a = |C_W(q^2_n) : C_W(s_n)|  \in \{1, 2, |T|\}$ by Lemma~\ref{ImportantFactHelp}, which is (c).
\end{proof}

Next we show equality in Lemma~\ref{ImportantFact}(b), which implies that there is an
injective map $g:T \rightarrow U$, which sends $s_i$ to $q_i^2$.

\begin{lemma}\label{ImportantFactHelp}
We have  $C_W(q_n^2) = C_W(s_n)$ and $|{\cal T}| = |T|$.
\end{lemma}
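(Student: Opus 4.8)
The two assertions are equivalent, and I would prove them by establishing the reverse inclusion $C_W(q_n^2)\subseteq C_W(s_n)$. Indeed, Lemma~\ref{ImportantFact}(b) gives $C_W(s_n)\subseteq C_W(q_n^2)$, so the assignment $s_n^{\,w}\mapsto (q_n^2)^w$ is a well-defined $W$-equivariant surjection from the reflection set $T$ onto the orbit ${\cal T}=(q_n^2)^V$. Since $T$ is a single conjugacy class, hence a transitive $W$-set with point stabiliser $C_W(s_n)$, this map is a bijection precisely when the two centralisers coincide, in which case $|{\cal T}|=|T|$ and $g\colon T\to U$, $s_i\mapsto q_i^2$, is the desired injection. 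Thus everything comes down to showing that no element of $W$ outside $C_W(s_n)$ fixes $q_n^2$ under the action of Lemma~\ref{ImportantFact}(a).

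To locate the candidate overgroups of $C_W(s_n)$ I would use the geometry of type $D_n$. Writing reflections through the roots $\pm e_i\pm e_j$, the map sending such a reflection to the unordered pair $\{i,j\}$ is a $W$-equivariant two-to-one map, so the two reflections over each pair form a block; the setwise stabiliser $B$ of $\{n-1,n\}$ then satisfies $C_W(s_n)\le B\le W$ with $[B:C_W(s_n)]=2$. Because $n>4$, the induced action of $W$ on unordered pairs is the primitive action of the symmetric group on $2$-subsets, so the pair system is the unique nontrivial block system and the only subgroups of $W$ containing $C_W(s_n)$ are $C_W(s_n)$, $B$ and $W$. It therefore suffices to exclude the possibilities $C_W(q_n^2)=W$ and $C_W(q_n^2)=B$. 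The first would force ${\cal T}=\{q_n^2\}$, hence all the $q_i^2$ equal by Corollary~\ref{Coro1}; the second is equivalent to $g(s)=g(s_n)$, where $s$ is the reflection through $e_n+e_{n-1}$, that is, to the collapse of the two orbit elements lying over the block $\{n-1,n\}$.

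Both remaining possibilities I would rule out by an explicit computation inside $V_{m,n}$, carried out with the two identities from the proof of Lemma~\ref{ImportantFact}, namely $q_j^{-1}q_k^2q_j=q_kq_j^2q_k^{-1}$ (coming from a braid relation) and $q_j^{-2}q_k^2q_j^2=q_k^2$. Since $s=s_1^{\,v}$ with $v=s_{m+2}s_{m+3}\cdots s_ns_{m+1}\cdots s_{n-1}$, one has $g(s)=(q_1^2)^{\tilde v}$ for the corresponding lift $\tilde v\in V_{m,n}$, and the task is to transport $(q_1^2)^{\tilde v}$ across the diagram using these identities and compare the result with $q_n^2$. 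Here the hypothesis $n>4$ (so that $m+2<n$) is essential: the unique $4$-cycle of $\Delta_{m,n}$, and hence the twisted cycle commutator relator, is supported away from the vertex $s_n$, so it cannot be used to identify $q_n^2$ with any other translate. This is the step I expect to be the main obstacle: one must show not merely that the braid relations alone fail to collapse the orbit --- which is Tits' computation for the pure Artin group (Theorem~\ref{Tits}) --- but that adjoining the twisted cycle commutator relator, which genuinely lowers the rank of $U$, still does not identify the distinct elements $g(s_n)$ and $g(s)$, nor make all the $q_i^2$ equal. The cleanest route is probably to settle the base case of type $D_4$ using the explicit presentation of \cite{BaumNeaRees} and then propagate the equality of centralisers along the tail $s_{m+2},\dots,s_n$ by repeated use of the two identities above.
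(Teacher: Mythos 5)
There are two genuine gaps here, one in each half of your argument. The first is in your classification of the overgroups of $C_W(s_n)$. Primitivity of the induced $W$-action on unordered pairs only rules out block systems \emph{coarser} than the pair system; it does not rule out block systems \emph{incomparable} with it, namely ones whose blocks meet each pair $\{s_n,s\}$ in at most one reflection (the regular action of $\Z_2\times\Z_2$ on itself has a block system with blocks of size two and primitive quotient, yet admits three distinct nontrivial block systems). Equivalently, your argument does not exclude an overgroup $H$ of $C_W(s_n)$ with $H\cap B=C_W(s_n)$ and $H\neq C_W(s_n)$. Your list $C_W(s_n)$, $B$, $W$ is in fact correct, but the paper has to work for it: it proves that any proper overgroup $M$ of $C:=C_W(s_1)$ must contain $O_2(W)$ --- using that $\Sym(n-2)\times\Z_2$ is maximal in $\Sym(n)\cong W/O_2(W)$ and that $O_2(W)$ has no $\Sym(n)$-invariant subgroup of index $2$ --- and hence must contain $B=O_2(W)C$; only then does the three-element list of overgroups follow.

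The second, and more serious, gap is that you never exclude the two remaining possibilities $C_W(q_n^2)=B$ and $C_W(q_n^2)=W$; you explicitly defer this as ``the main obstacle'', but it is precisely the content of the lemma. Your locality remark (the twisted cycle commutator is supported away from $s_n$, so it cannot identify $q_n^2$ with a translate) is not a valid argument: a relator acts through its normal closure and has global consequences --- indeed this very relator drops the rank of $U$ below $|T|$ (Proposition~\ref{UpperBoundDn}). The paper does the missing work as follows. It works with $q_1$ (a vertex of the $4$-cycle), transferring to $q_n$ via Corollary~\ref{Coro1}(a), and passes to $W_4=\langle s_1,s_m,s_{m+1},s_{m+2}\rangle\cong W(D_4)$ and $V_4=\langle q_1,q_m,q_{m+1},q_{m+2}\rangle$. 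The natural epimorphism $V_{2,4}\to V_4$ has kernel contained in the abelian group $\ker f_{2,4}$, hence centralising $q_1^2$, so the centraliser of $q_1^2$ can be computed in the genuine $D_4$ interval group, where GAP gives $C_{W(D_4)}(q_1^2)=C_{W(D_4)}(s_1)$. Then, if $C_W(q_1^2)$ were a proper overgroup of $C_W(s_1)$, the overgroup step would force $O_2(W)\cap W_4\cong\Z_2^3$ into $C_{W_4}(q_1^2)=C_{W_4}(s_1)$, which is impossible for order reasons in $W(D_4)$. So your closing sketch (settle $D_4$, then propagate along the tail) points in the right direction, but the $D_4$ base case is a machine computation you have not performed, and your proposed propagation ``by repeated use of the two identities'' is not the mechanism that carries the argument; the paper's subgroup-plus-overgroup reduction is.
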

\begin{proof}
We have that $C_W(q_n^2) \geq C_W(s_n)$ by Lemma~\ref{ImportantFact}, and therefore
$C_W(q_1^2) \geq C_W(s_1)$ by Lemma~\ref{Coro1}(a).
Recall that $W = O_2(W) \ltimes  \Sym(n)$, where $O_2(W)$ is the even part of the permutation module  for $\Sym(n)$, and that
$C:= C_W(s_1) = O\ltimes  H$, where $O = C_W(s_1) \cap O_2(W)$ is of order $2^{n-2}$ and $H \leq \Sym(n)$ is isomorphic to $\Sym(n-2) \times \Z_2$.
The group  $H$ is a maximal  in $\Sym(n)$.

First we show, if  $M$ is a proper overgroup of $C$  in $W$, then $O_2(W)$ is a subgroup of $M$.
Notice that  $C$ is a subgroup of index $2$ in $O_2(W)C$.  Assume $O_2(W) \not\leq M$.
Then, also  as $H$ is maximal in  $Sym(n)$, it follows, that $MO_2(W)/O_2(W) \cong \Sym(n)$. As $O_2(W)$ does not contain a
 $Sym(n)$-invariant subgroup of index $2$, we conclude $M = W$  and $O_2(W) \leq M$ in
 contradiction to our assumption.
 Thus, every overgroup of $C$ in $W$ contains  $O_2(W)$.

Consider the  subgroup $W_4$ of $W$ generated  by $s_1,s_m, s_{m+1}$ and $s_{m+2}$.
 It  is isomorphic to the Coxeter group of type $D_4$. Let $V_4$ be the subgroup
of $V$ that is generated by $q_1, q_m, q_{m+1}$ and $q_{m+2}$.
Further, let  $V_{2,4}$ be the group $V$ in the case that $(m, n) = (2,4)$. Then there is an epimorphism
$\varphi$ from $V_{2,4}$  onto $V_4$, which sends the elements $q_i$ in $V_{2,4}$, for $1 \leq i \leq 4$, onto the elements $q_1, ,q_m, q_{m+1}, q_{m+2}$ in $V$ according to the diagram.
Denote the kernel of $\varphi$ by $F$.
We derive from $f(V_4) \cong W(D_4)$  that $F$ is a subgroup of the kernel of the map
$f_{2,4}: V_{2,4} \rightarrow W(D_4)$, which sends $q_i$ onto $s_i$. As this kernel is abelian, the
subgroup $F$ commutes with $q_1^2$ in  $V_{2,4}$. This yields
$$C_{V_4}(q_1^2)  \cong C_{V_{2,4}/F}(q_1^2) =   C_{V_{2,4}}(q_1^2)  =  C_{W(D_4)}(q_1^2) = C_{W(D_4)}(s_1).$$
We checked the last equality with GAP.
We conclude  $C_{V_4}(q_1^2)  = C_{W_4}(s_1)$.

Assume that $C_W(q_1^2) >C =  C_W(s_1)$. Then $O_2(W) \leq C_W(q_1^2)$  by the second paragraph. Therefore, $\Z_2^3 \cong O_2(W) \cap W_4 \leq C_W(q_1^2)  \cap W_4 = C_{W_4}(q^2_1) = C_{W_4}(s_1) $,
which yields  the contradiction that $C_{W_4}(s_1)$, which is elementary abelian of order $16$, is a semi-direct product  of an elementary abelian group of order $8$ with an  elementary abelian group of order $4$. This proves  $C_W(q_n^2) = C_W(s_n)$. The second assertion is a consequence of the first.
\end{proof}

As a consequence  of Lemma~\ref{ImportantFactHelp}, we obtain that there is a bijective map $g$ from $T$ to  $\cal T
\subset U$ as in the case of a spherical Artin group; see Therorem~\ref{Tits}.

\begin{cor}\label{Coro2}
	Let  $n \geq 4$.  The  abelian group $U$ is generated by the set $\cal T$ of size $|T|$.
\end{cor}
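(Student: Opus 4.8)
The plan is to read the corollary off directly from the three preceding results: the description of $U$ as a normal closure (Lemma~\ref{Basics}(c)), the collapse of all the conjugacy classes $(q_i^2)^V$ into the single set $\cal T$ (Corollary~\ref{Coro1}(b)), and the size computation $|{\cal T}| = |T|$ (Lemma~\ref{ImportantFactHelp}). So I expect the corollary to be a formal consequence, with no new computation required.

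First I would invoke Lemma~\ref{Basics}(c): the group $U$ is the normal closure in $V$ of $\{q_i^2 \mid i \in I\}$, that is, $U$ is generated by the set $\bigcup_{i \in I} (q_i^2)^V$ of all $V$-conjugates of the squared generators. By Corollary~\ref{Coro1}(b) this union collapses, since $(q_i^2)^V = {\cal T}$ for every $i \in I$; hence the generating set is exactly $\cal T$ and $U = \langle {\cal T} \rangle$, which is the first assertion. For the count, recall that $U$ is abelian (Lemma~\ref{Basics}(b)), so the conjugation action of $V$ on $U$ factors through $V/U \cong W$ and $\cal T$ is a single $W$-orbit; by orbit–stabiliser $|{\cal T}| = [W : C_W(q_n^2)]$. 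The centralizer equality $C_W(q_n^2) = C_W(s_n)$ of Lemma~\ref{ImportantFactHelp}, together with the fact that the reflections of $W(D_n)$ form a single conjugacy class, then gives $|{\cal T}| = [W : C_W(s_n)] = |T|$. In fact Lemma~\ref{ImportantFactHelp} already states $|{\cal T}| = |T|$ outright, so this may simply be cited.

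The only subtlety I would flag is the range of $n$: Corollary~\ref{Coro1} and Lemma~\ref{ImportantFactHelp} are established under the standing hypothesis $n > 4$, so for $n \geq 5$ the argument above is immediate. The boundary case $n = 4$ (type $D_4$) is not formally covered by those lemmas, but the facts it needs — in particular the centralizer equality forcing $|{\cal T}| = |T|$ — were already verified for $W(D_4)$ by the explicit GAP computation recorded inside the proof of Lemma~\ref{ImportantFactHelp}. The genuinely hard input here is precisely that centralizer equality, and since it is already in hand, the main obstacle has been cleared upstream and the corollary follows without further effort.
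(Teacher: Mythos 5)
Your proposal is correct and follows exactly the paper's route: the published proof is a one-liner citing Lemma~\ref{Basics}(c) and Lemma~\ref{ImportantFactHelp}, with Corollary~\ref{Coro1}(b) (the collapse of all classes $(q_i^2)^V$ into the single set $\cal T$) implicitly supplying the definition of $\cal T$, just as you spell out. Your remark about the boundary case $n=4$ is a genuine point the paper glosses over --- the standing hypothesis before Lemma~\ref{ImportantFact} is $n>4$ while the corollary asserts $n\geq 4$ --- and your resolution via the GAP verification of $C_{W(D_4)}(q_1^2)=C_{W(D_4)}(s_1)$ inside the proof of Lemma~\ref{ImportantFactHelp} is the right way to close that gap.
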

\begin{proof} 
The	assertion  follows from Lemma~\ref{Basics} (c) and  Lemma~\ref{ImportantFactHelp}.
\end{proof}

In fact, a proper subgroup of $\cal T$ already  generates $U$.

\begin{proposition}\label{UpperBoundDn}
	Let $n \geq 4$.
	Then the abelianisation of the kernel of the map from $G_{m,n}$ to $W$ that takes $\ss_i$ to $s_i$  has a generating set of size at most $|T|-2$.
\end{proposition}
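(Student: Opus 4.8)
The plan is to push Tits's strategy one step beyond Corollary~\ref{Coro2}. That corollary gives that $U \cong K/[K,K]$ (Lemma~\ref{Basics}(e)) is generated by the $|T|$ elements $\mathcal T = \{u_t : t\in T\}$, where $u_{s_i}=q_i^2$ and $W$ acts by $w\cdot u_t = u_{wtw^{-1}}$. Since all reflections of $D_n$ are $W$-conjugate, $U$ is a \emph{cyclic} $\mathbb{Z}W$-module, so the assignment $e_t \mapsto u_t$ defines a surjection $\pi\colon \mathbb{Z}[T]=\mathbb{Z}[W/C_W(s_1)] \twoheadrightarrow U$ of the permutation module onto $U$. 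As $\operatorname{rank} U = |T| - \operatorname{rank}\ker\pi$, it suffices to produce inside $\ker\pi$ a $\mathbb{Z}W$-submodule of rank at least $2$.

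The relation that supplies this submodule is the twisted cycle commutator attached to the unique $4$-cycle $(s_1,s_m,s_{m+1},s_{m+2})$. Writing $c=q_m^{-1}q_{m+1}q_{m+2}q_{m+1}^{-1}q_m$ and $t_c=f(c)$, the relator is $[q_1,c]$; note $s_1$ and $t_c$ commute in $W$ because the twisted cycle commutator is a defining relation of $W$ (Theorem~\ref{ThmCameron}), so $f([q_1,c])=[s_1,t_c]=1$. Expanding $r:=[q_1,c]$ through the identities $q_i^2=u_{s_i}$ and the conjugation action (exactly the manipulations of the proof of Lemma~\ref{ImportantFact}) writes $r$ as a vector $\rho\in\mathbb{Z}[T]$, and imposing the twisted cycle commutator forces $\pi(\rho)=0$; the whole orbit $\mathbb{Z}W\cdot\rho$ then lies in $\ker\pi$.

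Two constraints on $\rho$ can be read off without evaluating it. Setting $A=u_{s_1}$, $B=u_{t_c}$, the equalities $q_1Bq_1^{-1}=B$ and $cAc^{-1}=A$ (both consequences of $s_1t_c=t_cs_1$), combined with $q_1Bq_1^{-1}=(q_1cq_1^{-1})^2$ and $cAc^{-1}=(cq_1c^{-1})^2$, yield $t_c\cdot\rho=-\rho$ and $s_1\cdot\rho=-\rho$. In particular the $W$-invariant part $\rho_0$ satisfies $\rho_0=s_1\cdot\rho_0=-\rho_0$, so $\rho$ has vanishing trivial component. Now I would invoke the representation theory of $\mathbb{Q}[T]$: the only degree-one characters of $W(D_n)$ are the trivial one and the sign character $\epsilon$, and since $\epsilon(s_1)=-1$ with $s_1\in C_W(s_1)$, Frobenius reciprocity shows $\epsilon$ does not occur in $\mathbb{Q}[T]$, while the trivial character occurs exactly once. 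Consequently any nonzero $W$-submodule of $\mathbb{Q}[T]$ whose trivial component vanishes contains an irreducible submodule that is neither trivial nor $\epsilon$, hence of dimension at least $2$. Applied to $\mathbb{Q}W\cdot\rho$ this gives $\operatorname{rank}\ker\pi \ge \operatorname{rank}(\mathbb{Z}W\cdot\rho)=\dim_{\mathbb{Q}}\mathbb{Q}W\cdot\rho\ge 2$, and therefore $\operatorname{rank} U\le |T|-2$.

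The single input that genuinely needs work, and which I expect to be the main obstacle, is the non-vanishing $\rho\ne 0$, i.e. that the twisted cycle commutator is not already forced by the braid relations $R_{m,n}$ at the level of $U$. Since $q_1$, $c$ and $t_c$ all lie in the $D_4$-subsystem on $\{s_1,s_m,s_{m+1},s_{m+2}\}$, the vector $\rho$ is supported on the reflections of that $W_4\cong W(D_4)$, so its non-vanishing can be checked in type $D_4$ (where $m=2$, $n=4$). There I would either carry out the explicit Tits expansion of $[q_1,c]$ or, as elsewhere in this section, compute $K/[K,K]$ directly with GAP and observe its rank is $|T|-2=10$; the passage to general $n\ge 5$ is then handled by the embedding $V_{2,4}\twoheadrightarrow V_4$ already used in the proof of Lemma~\ref{ImportantFactHelp}.
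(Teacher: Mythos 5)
Your overall frame is legitimate, but the decisive step --- writing the twisted cycle commutator $r=[q_1,c]$ as a vector $\rho\in\mathbb{Z}[T]$, i.e.\ as a $\mathbb{Z}$-combination of $W$-conjugates of the $q_i^2$ --- is impossible, and this is not a technicality: it is exactly the feature that distinguishes $G_{m,n}$ from the Artin group of the diagram $\Delta_{m,n}$. Any such expansion must take place in the group $\widetilde V:={\bf I}/\ll [L,L],\,R_{m,n}\gg$ in which the twisted cycle commutator has \emph{not} yet been imposed (in $V$ itself $r=1$, so expanding it yields only $\rho=0$). Writing $X$ for the normal closure in $\widetilde V$ of the elements $q_i^2$, the existence of $\rho$ says precisely that $r\in X$. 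Now $\widetilde V/X$ is the quotient by $[M,M]$ of the Coxeter group $\widetilde W$ of the diagram $\Delta_{m,n}$ viewed as an ordinary Coxeter diagram (the image of $L$ in $\widetilde W$ is $M:=\ker(\widetilde W\to W)=\ll[s_1,t_c]\gg_{\widetilde W}$, so the image of $[L,L]$ is $[M,M]$). Test this in the case $(m,n)=(2,4)$, which anchors your whole construction (you reduce the support of $\rho$ to the $D_4$-subsystem and propose to verify non-vanishing there): $\widetilde W$ is then the affine Coxeter group of type $\widetilde A_3$, in which $t_c=s_2s_3s_4s_3s_2$ is the reflection in the highest root $\theta$ of the finite $A_3$ and $s_1$ is the reflection in $\delta-\theta$, so $[s_1,t_c]=(s_1t_c)^2$ is the translation by $\pm 2\theta^\vee\neq 1$; since all $\widetilde W$-conjugates of $[s_1,t_c]$ are translations, $M$ lies in the abelian translation lattice, $[M,M]=1$, and $\widetilde V/X\cong\widetilde W$. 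Hence $r\notin X$ and $\rho$ does not exist; for $n\geq 5$ you give no argument that the situation improves when the remaining generators are available. Consequently your sign constraints and the character-theoretic endgame (which is correct in itself: $\epsilon$ does not occur in $\mathbb{Q}[W/C_W(s_1)]$ and the trivial character occurs exactly once) have nothing to act on.

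What survives is your frame and your fallback, and these already assemble into a correct proof, essentially the paper's. The surjection $\pi:\mathbb{Z}[T]\to U$ is valid (it needs Lemma~\ref{ImportantFactHelp}, so that the $W$-stabiliser of $q_1^2$ is $C_W(s_1)$, and Corollary~\ref{Coro2} for surjectivity), and your $D_4$ computation (GAP: $U$ free abelian of rank $10=|T|-2$ when $(m,n)=(2,4)$) is exactly the paper's base case. To reach $n\geq 5$ you need neither $\rho$ nor representation theory: the paper's epimorphism $\varphi:V_{2,4}\to V_4\leq V$ satisfies $f\circ\varphi=\phi\circ f_{2,4}$ for the isomorphism $\phi:W(D_4)\to W_4$, and checking on orbit generators gives $\pi|_{\mathbb{Z}[T\cap W_4]}=\varphi\circ\pi_{2,4}$ under the identification of the reflections of $W(D_4)$ with $T\cap W_4$ via $\phi$; hence $\ker\pi_{2,4}$, of rank $12-10=2$, embeds in $\ker\pi$, and $\mathrm{rank}\,U\leq |T|-2$ follows. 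The paper organises this same idea as an induction on $n$: with $P=\langle q_1,\dots,q_{n-1}\rangle$, a quotient of $V_{k,n-1}$, the subgroup $\langle (q_1^2)^P\rangle=\langle{\cal T}_{n-1}\rangle$ has rank at most $|T_{n-1}|-2$ by the inductive hypothesis, and adjoining the remaining $|T|-|T_{n-1}|$ elements of ${\cal T}$ gives $\mathrm{rank}\,U\leq(|T_{n-1}|-2)+(|T|-|T_{n-1}|)=|T|-2$.
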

\begin{proof} We prove the assertion by induction on $n$.  We checked by hand and using GAP that $U$ is free abelian of rank $10$ if $n = 4$.
	Now let $n \geq 5$.  
	Let $P := \langle q_1, \ldots , q_{n-1} \rangle \leq V$. Then $P$ is a quotient of the 
	respective group $V_{k,n-1}$  related to $G_{k,n-1}$, where $k = m$  if $m <n/2$ and $k = m-1$   if $m = n/2$.
Let $T_{n-1}$ be the set of reflections in $f(P) \cong W(D_{n-1})$. 
	Then $|(q_1^2)^P|  = |T_{n-1}|$  and by induction $\langle (q_1^2)^P \rangle$ is abelian of  rank at most $|T_{n-1}|- 2$. This implies that 
	$$ U = \langle  {\cal T}  \rangle = 
	 \langle {\cal T}_{n-1} \rangle  \langle {\cal T} \setminus{ {\cal T}_{n-1}} \rangle $$
	is abelian of rank at most 
%	Let ${\cal T}_n$ be a transversal to $C_{W_n}(s_1)$ in $W_n$ such that
%	${\cal T}_{n-1} := {\cal T}_{n} \cap P(W_n)$ is a transversal to $C_{P(W_n)}(s_1)$ in $P(W_n)$.
%	Then we see that the subgroup
%	$$\langle   (( q_1 N_n)^2 )^{{\cal T}_n} \rangle = \langle ( q_1 N_n)^2 )^{{\cal T}_{n-1}} , ( q_1 N_n)^2 )^{{\cal T}_n\setminus{P(W_n)}} \rangle$$
%	$$= \langle (( q_1 N_n)^2 )^{{\cal T}_{n-1}} \rangle  \langle ( (q_1 N_n)^2 )^{{\cal T}_n\setminus{P(W_n)}} \rangle$$ has rank  at most 
$ |T_{n-1}|  - 2 + (  |T|  -  |T_{n-1}| )=   |T| - 2$, as claimed. 
	\end{proof}

\subsection{Non-isomorphism}

\begin{theorem}\label{NonIsoDn}
	Let $W$ be a Coxeter group of type $D_n$ with $n\ge 4$ even, and $w$ a proper quasi-Coxeter element  in $W$. Then $G([1,w])$ is not isomorphic 
	to the Artin group of type  $D_n$.
\end{theorem}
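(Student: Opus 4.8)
The plan is to combine the rank bound just established with the result of Cohen and Paris. Let $W$ be of type $D_n$ with $n$ even, let $w$ be a proper quasi-Coxeter element, and suppose for contradiction that there is an isomorphism $\psi : G := G([1,w]) \to A := A(D_n)$. The strategy is to show that such an isomorphism would produce an epimorphism from $A$ to $W$ whose pure kernel has an abelianisation of rank strictly smaller than $|T|$, contradicting Tits' Theorem~\ref{Tits} in view of the uniqueness statement in Theorem~\ref{CohenParis}.

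\textbf{Step 1: Produce an epimorphism $A \to W$ via $\psi$.} By Theorem~\ref{ThmCameron} there is a canonical epimorphism $\pi_G : G \to W$ sending each generator $\ss_i$ to the reflection $s_i$, and its kernel is the pure interval group $K$. Composing with $\psi^{-1}$ gives an epimorphism $\pi_G \circ \psi^{-1} : A \to W$ whose kernel is $\psi(K)$, a group isomorphic to $K$.

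\textbf{Step 2: Invoke Cohen--Paris to identify this epimorphism with the canonical one.} Since $n$ is even, Theorem~\ref{CohenParis} asserts that the canonical epimorphism $\varphi : A(D_n) \to W(D_n)$ is the \emph{unique} epimorphism from $A$ onto $W$ up to automorphisms of $W$. Hence there is an automorphism $\alpha$ of $W$ with $\alpha \circ \varphi = \pi_G \circ \psi^{-1}$. It follows that $\ker(\varphi) = \psi(K)$, so the pure Artin group $\PA(D_n) = \ker(\varphi)$ is isomorphic to $K$. In particular their abelianisations are isomorphic.

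\textbf{Step 3: Derive the contradiction from the rank estimates.} By Tits' Theorem~\ref{Tits}, the abelianisation of $\PA(D_n)$ is free abelian of rank exactly $|T|$. On the other hand, Proposition~\ref{UpperBoundDn} shows that the abelianisation of $K$ is generated by at most $|T|-2$ elements, so its rank is at most $|T|-2$. Since the abelianisation is a functorial invariant, $K \cong \PA(D_n)$ forces these ranks to agree, giving $|T| \le |T|-2$, a contradiction. Therefore no such $\psi$ exists, and $G([1,w])$ is not isomorphic to $A(D_n)$.

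\textbf{The main obstacle} is not this final assembly, which is short, but rather the two inputs it rests upon: the parity hypothesis $n$ even is exactly what is needed to apply the uniqueness clause of Theorem~\ref{CohenParis} (for $n$ odd there are three epimorphisms up to automorphism, so Step~2 breaks down and a different argument is required, as indeed the authors defer to \cite{BHNR_Part3}), and the genuine work lies in Proposition~\ref{UpperBoundDn}, whose inductive bound of $|T|-2$ on the generating set of $U$ is what makes the rank gap appear. The role of this theorem is simply to package those facts against Tits' exact computation to rule out an isomorphism.
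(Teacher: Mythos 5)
Your proposal is correct and follows essentially the same route as the paper's own proof: assume an isomorphism, use Theorem~\ref{ThmCameron} to get the epimorphism $G \to W$ with kernel $K$, invoke the uniqueness clause of Theorem~\ref{CohenParis} (valid precisely because $n$ is even) to identify the transported kernel with $\PA(D_n)$, and then contradict Tits' rank-$|T|$ computation using the bound of Proposition~\ref{UpperBoundDn}. The paper compresses this into three sentences, leaving the kernel-identification step implicit; your write-up merely makes that step explicit, so there is no substantive difference.
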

\begin{proof}
	By Proposition~\ref{UpperBoundDn}, the abelianisation  of $L/B$ is of rank less than $|T|$ in $G_{m,n}$.  According to Theorem~\ref{CohenParis},
	there is a unique epimorphism form $A(D_n)$ to the Coxeter group $W(D_n)$ if $n$ is even. As according to Tits' Theorem~\ref{Tits} the abelianisation of the pure Artin group $\PA(D_n)$
 is of rank $|T|$, it follows that $G_{m,n}$ and $A(D_n)$ are not isomorphic.
\end{proof}

\subsection{The cases $E_6$, $E_7$, $E_8$}\label{SubNonIsomE6E7E8}

Now consider the exceptional cases. We use the same notation as before, and we just assume that $W$ is a Coxeter group of type $E_n$ for some $n \in \{6,7,8\}$.
Observe that Lemmas~\ref{Basics} and \ref{ImportantFact}(a) of the last section still hold. We need to show Lemma~\ref{ImportantFact}(b) case-by-case, which
we do next. Then analogous statements of the two Corollaries~\ref{Coro1} and \ref{Coro2} hold as well.

%\begin{lemma}\label{Prefixes}
%	Let $K \subset \{1, \ldots , n\}$. Then there is  an ordering of the complement $J$ of $K$ in $\{1, \ldots , n\}$ such that 
%	$$w(J) := \prod_{j \in J} s_{j}$$ is a prefix of $w$ in $W$ when multiplied according to this ordering. 
%\end{lemma}
%\begin{proof}
%Given the factorisation of $w$ as a product of $s_1,\ldots,s_n$,
%we can use the Hurwitz action to move those elements $s_k$ with $k \in K$ to the end of $w$ without changing the $s_j$ for which $j \in J$.
%\end{proof}

\begin{lemma} 
Let $W$ be a Coxeter group of type $E_n$ with $n \in \{6,7,8\}$, and let $w$ be a proper quasi-Coxeter element.
	Then there is $s_1 \in S$ such that $C_W(q_1^2) \geq C_W(s_1)$.
\end{lemma}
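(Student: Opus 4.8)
I want to show that for $W$ of type $E_n$ ($n \in \{6,7,8\}$) with a proper quasi-Coxeter element $w$, there is a generator $s_1 \in S$ whose $q_1^2$ is centralised by at least $C_W(s_1)$. The template I would follow is the \emph{second} proof of Lemma~\ref{ImportantFact}(b) given in the $D_n$ case, since the authors explicitly say that strategy is designed to transfer to the exceptional groups. That proof proceeds entirely through parabolic subgroups and the relation $q_i^2 = g(r_i)$, \emph{without any explicit braid-word calculation}, which is exactly what one wants when treating $E_6,E_7,E_8$ uniformly.

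**The plan.** First I would fix, for each of $E_6$, $E_7$, $E_8$ and for each conjugacy class of proper quasi-Coxeter elements, the generator $s_1$ to be the vertex of the Carter diagram that lies on the distinguished $4$-cycle and is best separated from the rest of the diagram — in the figures of Section~\ref{SubCarterDiagrams} this is the ``pendant'' vertex of the $D_4$-subdiagram, playing the role that $s_1$ (the root $e_n - e_{n-1}$) plays in type $D_n$. The key structural input is that $s_1$ commutes with a large set of the remaining generators in $W$, so that the subgroup $P$ generated by those commuting $q_i$ fixes $q_1^2$ in $V$, exactly as in the first displayed equation of the second $D_n$ proof. This uses only Lemma~\ref{Basics}(c) together with property (ii) there, $q_j^{-2} q_k^2 q_j^2 = q_k^2$, which holds whenever $s_j$ and $s_k$ commute.

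**Assembling the centraliser.** Next I would exhibit a parabolic subgroup $P_e$ of $W$ containing $s_1$ inside which $w_e := s_1 \cdot(\text{a subword of } w)$ is a \emph{parabolic Coxeter element}, so that Tits' \cite[Proposition~2.1(3)]{Tits} applies and gives $C_{P_e}(q_1^2) \ge C_{P_e}(s_1)$. Combining the contribution from the commuting generators (the subgroup $P$) with the contribution from $P_e$, I would show that $\langle C_{P_e}(s_1), P\rangle$ is contained in $C_W(q_1^2)$, and then verify that this generated subgroup is in fact all of $C_W(s_1)$. The last verification is a finite check about centralisers of reflections in the specific Weyl group, and can be confirmed with GAP \cite{GAP4} in each case, mirroring the ``we checked the last equality with GAP'' step in the $D_n$ argument.

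**The main obstacle.** The delicate point is not producing \emph{some} centralising elements but ensuring that the commuting generators $P$ and the Coxeter-prefix subgroup $P_e$ together generate \emph{all} of $C_W(s_1)$ — the inequality $C_W(q_1^2)\ge C_W(s_1)$ requires covering every generator of $C_W(s_1)$, and in the exceptional types $C_W(s_1)$ is not as transparently structured as the $O_2(W)\rtimes(\Sym(n-2)\times\Z_2)$ of type $D_n$. I expect the proof to be case-by-case: for each Carter diagram I must check that the chosen $s_1$ admits a companion parabolic $P_e$ in which a prefix of $w$ is a genuine Coxeter element and that the commuting-generator subgroup fills the remaining part of $C_W(s_1)$. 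Because a $4$-cycle (a $D_4$ sub-pattern) sits inside every one of these diagrams, the crucial local configuration is always present, so the real work is the bookkeeping of which reflections centralise $s_1$ and confirming, with a short finite computation, that nothing in $C_W(s_1)$ is missed.
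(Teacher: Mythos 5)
Your two-subgroup skeleton ($P$ generated by the Carter generators commuting with $s_1$, plus a companion parabolic whose centralising property is imported, followed by a generation check) is the right family of ideas, but your companion subgroup is not the one the paper uses, and this is where the argument has a genuine gap. You transplant the second $D_n$ proof literally: a companion $P_e$ in which a prefix of $w$ is a parabolic \emph{Coxeter} element, so that \cite[Proposition~2.1(3)]{Tits} applies. The paper's proof for $E_n$ cannot and does not do this: its companion is the co-rank-one parabolic $P_m = \langle s_i \mid i \neq m \rangle$, and the inequality $C_{P_m}(q_1^2) \geq C_{P_m}(s_1)$ is obtained from Lemma~\ref{ImportantFact}(b) \emph{together with induction on the rank}, precisely because the Carter diagram induced on $P_m$ still contains a $4$-cycle, so no prefix of $w$ inside $P_m$ is a Coxeter element and Tits' proposition does not apply to it. Your route, by contrast, forces the companion's simple system to be a subset of $S$ inducing a \emph{tree} in the Carter diagram (otherwise there are no elements $q_j$ in $V$ satisfying the braid relations needed for the transfer), and in the strongly cyclic diagrams this is fatal. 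Take $E_8(a_8)$: the diagram is $3$-regular with $12$ edges, so any induced forest has at most $5$ vertices; every kept vertex outside the tree component $T_1$ containing $s_1$ is non-adjacent to $s_1$, hence commutes with $s_1$ and already lies in $P$. Thus the only new input your $P_e$ can contribute is $C_{W(T_1)}(s_1)$, where $T_1$ has at most five vertices, so this group has order at most $96$ (type $D_5$), whereas the paper's inductive companion yields $C_{P_m}(s_1)$ of type $A_1 \times D_6$, of order $46080$. You would then need $\langle P, C_{W(T_1)}(s_1)\rangle = C_W(s_1) \cong \Z_2 \times W(E_7)$ (order $5806080$) with $P \cong \Z_2 \times W(D_4)$ of order $384$ --- a generation statement that you never verify, that is exactly the crux of the lemma, and for which there is no structural reason to expect success. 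The paper's choice of companion is what makes the analogous generation statement both true and checkable by the overgroup and index arguments in its tables (with $E_7(a_1)$ done by hand with explicit roots); it is not a routine GAP afterthought but the place where the inductive idea is indispensable.

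There is also a secondary but real error in your choice of distinguished generator. In the $D_n$ model proof the distinguished generator is $s_n$, corresponding to the root $e_n - e_{n-1}$ at the far end of the long arm, \emph{not} a vertex of the $4$-cycle; you have conflated it with $s_1$, which in $\Delta_{m,n}$ is a cycle vertex, and accordingly you propose to take $s_1$ on the $4$-cycle in the $E_n$ diagrams. The paper instead always takes $s_1$ to be the upper-right corner of the Carter diagram, an end-of-arm vertex whenever one exists; this maximises $P$ (a degree-one vertex commutes with $n-2$ of the remaining generators) and, crucially for the later application in Theorem~\ref{NonIsoEn}, ensures that deleting the vertex of $s_1$ leaves a $4$-cycle intact, which is what the induction on the abelianisation rank requires. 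With your on-cycle choice both $P$ and the companion centraliser shrink further, and the downstream argument would not go through even if the present lemma did.
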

\begin{proof}
	We follow the second proof of Lemma~\ref{ImportantFact}(b). Our strategy is as follows.
	We have that 
	$$P:= \langle s_i ~|~ [s_i,s_1] = 1 \rangle~\mbox{as well as}~P_m:= \langle s_i ~|~ 1 \leq i  \leq   n, i \neq m\rangle $$
	are parabolic subgroups  of $W$ for every $m \in \{1, \ldots , n\}$ and $s_1 \in S$.
	       Therefore, we have $P \cap P_m = \langle s_i ~|~ [s_i,s_1] = 1 $  and  $s_i  \in P_m \rangle $
         by \cite[Chapter IV, Section 2]{Bourbaki}, which can be easily computed, as we will do in the following tables.

        In the three tables, we always choose $s_1$ to correspond to the vertex in the Carter diagram which is in the upper right corner and $s_m$ to be in the bottom right corner, except for Cases $E_7(a_3)$ and $E_8(a_8)$, where $s_m$ is in the bottom left corner, and we present $P$, $P_m$, $C_{P_m}(s_1)$, and $P \cap C_{P_m}(s_1) = P \cap P_m$. We describe a parabolic subgroup by writing down the type of the related root system.
        Our choice of $s_1$ is always such that $\langle P, C_{P_m}(s_1)  \rangle = C_W(s_1)$ for each $n \in \{6,7,8\}$ and each $E_n(a_i)$. By the definition of $P$, we have $P \leq C_W(q_1^2)$, and 
        by Lemma~\ref{ImportantFact}(b) and by induction we obtain $ C_{P_m}(q_1^2) \geq  C_{P_m}(s_1)$. This yields
         $$C_W(q^2_1) \geq \langle C_P(q_1^2), C_{P_m}(q_1^2)  \rangle \geq  \langle P, C_{P_m}(s_1)  \rangle = 
        C_W(s_1),$$ as claimed.
	\bigskip\\
	{\it Case $n = 6$:}
	\noindent
	$$\begin{array}{|c|c|c|c|c|}
		\hline
		\mbox{Type of}~w & P & P_m & C_{P_m}(s_1) & P \cap P_m \\\hline
		E_6(a_1) & A_1 \times A_4 & D_5  & A_1^2 \times D_3 & A_1 \times A_3 \\
		E_6(a_2) & A_1 \times A_3 & D_5  & A_1^2 \times D_3\ & A_1 \times A_3\\
		\hline
	\end{array}$$
	In the first case $P$ (resp. $C_{P_m}(s_1)$ in the second case) is a maximal subgroup in $C_W(s_1) \cong \Z_2 \times \Sym(6)$,
	which yields the assertion in both cases.
	\bigskip\\
	{\it Case $n = 7$:}
	\noindent
	$$\begin{array}{|c|c|c|c|c|}
		\hline
		\mbox{Type of}~w & P & P_m & C_{P_m}(s_1) & P \cap P_m \\\hline
		
		E_7(a_1) & A_1 \times D_5 & D_6 & A_1^2 \times D_4 & A_1 \times D_4\\
		E_7(a_2)  & A_1 \times A_5 & E_6 & A_1 \times A_5 &  A_1 \times A_4 \\
		E_7(a_3)  & A_1 \times D_4 & E_6 & A_1 \times A_5 & A_1 \times A_4 \\
		E_7(a_4)  & A_1 \times D_4 & E_6 & A_1 \times A_5 & A_1 \times  A_3\\
		\hline
	\end{array}$$
	
	If $W$ is of type $E_7$, then $C_W(s_1)$ is of type $A_1 \times D_6$, i.e. isomorphic to $\Z_2 \times \Z_{2}^5 \rtimes \Sym(6)$.
	The overgroups of $C_{P_m}(s_1) \cong \Z_2 \times \Sym(6)$ in $C_W(s_1)$  appearing in the table, if $a_i = a_2,a_3$ or $a_4$, are 
	       $$\Z_2 \times \Sym(6) < \Z_2^2 \times \Sym(6)  < \Z_2 \times \Z_{2}^5 \rtimes \Sym(6)= C_W(s_1).$$ 
	       Thus, if $a_i = a_2,a_3$ or $a_4$, then 
	       $P \cap P_m$ is not of index $2$ in $P$ and  we get $\langle P, C_{P_m}(s_1)  \rangle = C_W(s_1)$, thus the assertion.
 In the case $E_7(a_1)$, we construct the centraliser by hand, which we will do now.
	\medskip
	\\
	$E_7(a_1)$: Using Bourbaki's notation \cite{Bourbaki}, the following roots give an $E_7(a_1)$-diagram:
	$$e_1 + e_3, e_4 + e_1, e_5 - e_4,  e_6 - e_5$$
	$$e_3 - e_2, e_3 - e_1, 1/2(e_1+e_8) - 1/2(e_2+e_3+e_4 +e_5 +e_6 +e_7).$$
	Using the given roots we see that $C_{P_m}(s_1)$  is related to the root system
	generated by $e_6 - e_5,  e_5 + e_6,     e_1 + e_3, e_4 + e_1, e_3 - e_2, e_3 - e_1$.
	Thus $\langle P, C_{P_m}(s_1)  \rangle $ is generated by the reflections related 
	to the roots $e_6 - e_5,  e_5 + e_6,     e_1 + e_3, e_4 + e_1, e_3 - e_2, e_3 - e_1, 1/2(e_1+e_8) - 1/2(e_2+e_3+e_4 +e_5 +e_6 +e_7)$,
	which generate a root system of type $A_1 \times D_6$. Hence we get the assertion in this case as well.
	\bigskip\\
	{\it Case $n = 8$:}
	\noindent
	$$\begin{array}{|c|c|c|c|c|}
		\hline
		\mbox{Type of}~w & P & P_m & C_{P_m}(s_1) & P \cap P_m \\\hline
		E_8(a_1) & A_1 \times E_6 & D_7 & A_1^2 \times D_5 & A_1 \times D_5 \\
		E_8(a_2) & A_1 \times E_6 & D_7 & A_1^2 \times D_5 & A_1 \times D_5\\
		E_8(a_3) & A_1 \times E_6 & E_7 & A_1 \times D_6 & A_1 \times  A_5 \\
		E_8(a_4) & A_1 \times E_6 & D_7 & A_1^2 \times D_5 & A_1 \times  D_5\\
		E_8(a_5) & A_1 \times E_6 & D_7 & A_1^2 \times D_5 & A_1 \times  D_5\\
		E_8(a_6) & A_1 \times D_5 & E_7 & A_1 \times D_6 & A_1 \times  D_5\\
		E_8(a_7) & A_1 \times E_6 & E_7 & A_1 \times D_6 & A_1 \times  D_5\\
		E_8(a_8) & A_1 \times D_4 & E_7 & A_1 \times D_6 & A_1 \times  A_1^3\\
		\hline
	\end{array}$$
	
	Here $C_W(s_1)$ is of type $A_1 \times E_7$. In the cases where $P_m$ is of type $E_7$, the overgroups of $C_{P_m}(s_1)$ in $C_W(s_1)$
	        are $$C_{P_m}(s_1) = \Z_2 \times \Z_{2}^5 \rtimes \Sym(6) < \Z_2^2 \times \Z_{2}^5 \rtimes \Sym(6) < \Z_2 \times E_7 = C_W(s_1).$$
	It is straightforward to see that the index $|P : P \cap P_m| >2$ in all cases and thereby obtain $\langle P, C_{P_m}(s_1)  \rangle = C_W(s_1)$, which is the assertion.  Thus, it remains to consider the cases where $P_m$ is of type $D_7$. In these cases $P$ is a maximal subgroup of $C_W(s_1)$, but $C_{P_m}(s_1)$ is not contained in $P$, which also shows the assertion.
\end{proof}

\begin{theorem}\label{NonIsoEn}
	Let $W = W(E_n)$ be a Coxeter group of type $E_n$, $n \in \{6,7,8\}$, and $w$  a proper quasi-Coxeter element  in $W$. Then the interval group $G([1,w])$ is not isomorphic to the 
	Artin group $A(E_n)$.
\end{theorem}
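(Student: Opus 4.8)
The plan is to mirror the proof of Theorem~\ref{NonIsoDn}, with the inductive rank bound from Proposition~\ref{UpperBoundDn} replaced by a direct estimate that exploits the $4$-cycle present in every Carter diagram $E_n(a_i)$. I keep the notation $V,U,{\cal T},q_i$ of the previous subsection. As observed there, Lemmas~\ref{Basics} and \ref{ImportantFact}(a) carry over, the preceding lemma provides $s_1\in S$ with $C_W(q_1^2)\ge C_W(s_1)$, and the analogues of Corollaries~\ref{Coro1} and \ref{Coro2} give $U=\langle{\cal T}\rangle$ with ${\cal T}=(q_1^2)^V$. First I would record that the inclusion $C_W(q_1^2)\ge C_W(s_1)$ makes $g(s_1^{w}):=(q_1^2)^{w}$ (using the $W$-action of Lemma~\ref{ImportantFact}(a)) a well-defined $W$-equivariant map $g\colon T\to U$ with $g(T)={\cal T}$; using property (i) of Lemma~\ref{ImportantFact} along a path in the connected diagram one checks $g(s_j)=q_j^2$ for every $j$. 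Equivalently there is a surjection of abelian groups $\Z[T]\twoheadrightarrow U$, so that $\mathrm{rank}\,U\le|T|$.

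The heart of the argument is to improve this to $\mathrm{rank}\,U\le|T|-2$. Each diagram $E_n(a_i)$ contains a $4$-cycle $(s_{i_1},s_{i_2},s_{i_3},s_{i_4})$ (as recorded in Section~\ref{SubCarterDiagrams} and used in the proof of Theorem~\ref{ThmLattice}); set $P:=\langle q_{i_1},q_{i_2},q_{i_3},q_{i_4}\rangle\le V$. Since these four generators satisfy precisely the braid and twisted cycle commutator relations of the type-$D_4$ interval group, $P$ is a quotient of the group $V_{2,4}$ attached to $G_{2,4}$, and $f(P)$ is a subgroup of $W$ of type $D_4$ whose $12$ reflections form a subset $T_{D_4}\subseteq T$. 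By $W$-equivariance of $g$ (and $g(s_{i_1})=q_{i_1}^2$), the subgroup $\langle g(T_{D_4})\rangle\le U$ equals the one generated by the $f(P)$-orbit of $q_{i_1}^2$; applying the base case $n=4$ of Proposition~\ref{UpperBoundDn}, this subgroup has rank at most $10$. Since $\Z[T_{D_4}]$ is a direct summand of $\Z[T]$, the restriction of $\Z[T]\twoheadrightarrow U$ to $\Z[T_{D_4}]$ then has kernel of rank at least $12-10=2$, whence $\mathrm{rank}\,\ker(\Z[T]\to U)\ge 2$ and $\mathrm{rank}\,U\le|T|-2<|T|$.

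With this rank bound the non-isomorphism is formal. Suppose for contradiction that $\Theta\colon G([1,w])\xrightarrow{\ \sim\ }A(E_n)$ is an isomorphism. Composing $\Theta^{-1}$ with the epimorphism $G([1,w])\to W$ supplied by Theorem~\ref{ThmCameron} yields an epimorphism $A(E_n)\to W$, which by Theorem~\ref{CohenParis} equals $\alpha\circ\varphi$ for the canonical $\varphi$ and some automorphism $\alpha$ of $W$; its kernel is therefore $\PA(E_n)$. Thus $\Theta$ carries the pure interval group $K$ isomorphically onto $\PA(E_n)$, so $U\cong K/[K,K]\cong\PA(E_n)/[\PA(E_n),\PA(E_n)]$. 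The latter is free abelian of rank $|T|$ by Tits' Theorem~\ref{Tits}, while $\mathrm{rank}\,U\le|T|-2$; this contradiction proves the theorem.

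I expect the main obstacle to lie in the middle paragraph: one must check that the four generators attached to the $4$-cycle genuinely present a quotient of $V_{2,4}$ (so that the $D_4$ computation of Proposition~\ref{UpperBoundDn} transfers), and that the two resulting relations remain non-trivial in $U$, i.e. that $\langle g(T_{D_4})\rangle$ really has rank at most $10$ after passing to the quotient $P$. Once this $D_4$ input is secured, the remaining ingredients—the well-defined surjection $\Z[T]\twoheadrightarrow U$ coming from the centraliser inequality, and the Cohen–Paris rigidity of the epimorphism $A(E_n)\to W$—combine routinely.
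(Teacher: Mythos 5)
Your proposal is correct, and it rests on the same three pillars as the paper's proof: the case-by-case centraliser lemma supplying $s_1\in S$ with $C_W(q_1^2)\ge C_W(s_1)$, a rank bound $\mathrm{rank}\,U\le|T|-2$, and the combination of Theorem~\ref{CohenParis} with Theorem~\ref{Tits}. The genuine difference is in how the rank bound is obtained. The paper's proof of Theorem~\ref{NonIsoEn} deletes the vertex $s_1$ from the Carter diagram, notes that the remaining diagram still contains a quadrangle, and then runs the vertex-removal induction of Proposition~\ref{UpperBoundDn}: the conjugates of the $q_i^2$ under the vertex-deleted parabolic generate a subgroup of rank at most $|T_P|-2$ by the inductive hypothesis, the remaining conjugates contribute at most $|T|-|T_P|$, and the induction descends through sub-diagrams retaining a quadrangle until it reaches the GAP-verified $D_4$ base case. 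You instead jump straight to the $4$-cycle: $P=\langle q_{i_1},q_{i_2},q_{i_3},q_{i_4}\rangle$ is a quotient of $V_{2,4}$, so $\langle g(T_{D_4})\rangle$ has rank at most $10$, and your kernel count for $\Z[T]\twoheadrightarrow U$ (equivalent to the paper's decomposition $U=\langle{\cal T}_P\rangle\langle{\cal T}\setminus{\cal T}_P\rangle$) yields $|T|-2$ in one step. Your route needs only the single $D_4$ computation and avoids verifying at each inductive stage that the vertex-deleted diagram is itself the Carter diagram of a proper quasi-Coxeter element of the parabolic; the paper's route is shorter to state because Proposition~\ref{UpperBoundDn} is already in place and keeps the $E_n$ argument uniform with the $D_n$ one. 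As for the obstacle you flag, it is resolved exactly as in the paper's proof of Lemma~\ref{ImportantFactHelp}, where the analogous map $\varphi\colon V_{2,4}\to V_4$ is constructed in type $D_n$: the braid relations and the twisted cycle commutator of the $4$-cycle are among the defining relators of $V$ (the cycle is chordless, so the diagonal commutations lie in $R(\Delta)$ as well), and $[L_{2,4},L_{2,4}]$ dies because the image of $L_{2,4}$ lands in the abelian group $U=\ker f$; your second worry is vacuous, since you only need the image of $U_{2,4}$ to have rank at most $10$, which is automatic for a quotient of a free abelian group of rank $10$.
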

\begin{proof}
	Let $w$ be a proper quasi-Coxeter element  in $W$, and let $P := \langle s_i ~|~ 2 \leq i  \leq  n\rangle $.  According to our setting, the Carter diagram of  $w$ where we remove the vertex related to $s_1$ contains a quadrangle. 
	Therefore, as in the proof of Proposition~\ref{UpperBoundDn}, we get by induction that the rank of the abelianisation $U$ is at most $|T|-2$ where $T$ is the set of reflections in $W$.
	Therefore, Theorems~\ref{CohenParis} and \ref{Tits} yield that $A(E_n)$ and $G([1,w])$ are not isomorphic.
\end{proof}

\section{Open questions}\label{SubNonIsonOpen}

Since each interval group related to a proper quasi-Coxeter element is not isomorphic to the corresponding Artin group, we will develop some open questions that are originally considered in the theory of Artin groups and Garside groups. A positive answer to all these questions exist for interval groups related to Coxeter elements (the case of Artin groups). So the questions are still open for the interval groups related to proper quasi-Coxeter elements.

\begin{itemize}
	\item[(a)] Can we solve the word and conjugacy problems for the interval groups?
	
	\item[(b)] Is the centre of each interval group infinite cyclic? Note that a certain power of the lift of the quasi-Coxeter element to the interval group is always central.
	
	\item[(c)] Are the interval groups torsion-free?
	
	\item[(d)] Is the monoid defined from the presentation in Proposition~\ref{PropDualPresDn} (viewed as a monoid presentation) cancellative? Does it inject in the corresponding interval group?
	
	\item[(e)] Can we describe the parabolic subgroups of the interval groups?
	
	\item[(f)] Is the interval complex related to the poset of non-crossing partitions of a proper quasi-Coxeter element a classifying space for the interval group? This question is relevant to the $K(\pi,1)$ conjecture for Artin groups.
	
\end{itemize}

\bibliographystyle{alpha}
\bibliography{IntervalGrpsII.bib}

%
%\blfootnote{Barbara Baumeister, Fakultät für Mathematik, Universität Bielefeld, Postfach 10 01 31, 33501 Bielefeld, Germany,
%	E-mail: b.baumeister@math.uni-bielefeld.de,\\
%	
%	Georges Neaime, Fakultät für Mathematik, Universität Bielefeld, Postfach 10 01 31, 33615 Bielefeld, Germany,
%	E-mail: gneaime@math.uni-bielefeld.de,\\
%	
%	Sarah Rees, School of Mathematics and Statistics, University of Newcastle, Newcastle NE1 7RU, UK,
%	E-mail: sarah.rees@newcastle.ac.uk.}
%

\end{document}